\theoremstyle{plain}
\newtheorem{lemma}{Lemma}
\newtheorem{proposition}{Proposition}
\newtheorem{theorem}{Theorem}
\newtheorem{corollary}{Corollary}
\theoremstyle{definition}
\newtheorem{definition}{Definition}
\newtheorem{remark}{Remark}
\newtheorem{example}{Example}
\newtheorem{algorithm}{Algorithm}
\newtheorem{assumption}{Assumption}
    \newcommand{\Mend}{\hfill \ensuremath{\vartriangleleft}}
    \newcommand{\Mrnk}{\ensuremath{\textrm{rank}}}
    \newcommand{\Msum}[2]{\ensuremath{\overset{#2}{\underset{#1}{\sum}}}}
    \newcommand{\Mmod}[1]{\langle #1\rangle} 
    \newcommand{\Mset}[2]{\ensuremath{\{~ #1 ~|~ #2 ~\}}}
    \newcommand{\Mfun}[5]{\ensuremath{#1\colon #2 \rightarrow #3,\quad #4 \mapsto #5 }}
    \newcommand{\Miff}{if and only if }
    \newcommand{\Mst}{{such that }}
    \newcommand{\Mwlog}{without loss of generality }
    \newcommand{\Mresp}{respectively}
    \newcommand{\Mwrt}{with respect to }
    \newcommand{\Marrow}[3]{\ensuremath{#1\stackrel{#2}{\longrightarrow}#3}}
    \newcommand{\Mdasharrow}[3]{\ensuremath{#1\stackrel{#2}{\dashrightarrow}#3}}
    \newcommand{\Mrow}[3]{\ensuremath{#1\colon #2\longrightarrow #3}}
    \newcommand{\Mdashrow}[3]{\ensuremath{#1\colon #2\dashrightarrow #3}}
    \newcommand{\Mhookrow}[3]{\ensuremath{#1\colon #2\hookrightarrow #3}}
    \newcommand{\Mdef}[1]{\textit{#1}\index{#1}}
    \newcommand{\MdefAttr}[2]{\textit{#1}\index{#2!#1}\index{#1}}
    \newcommand{\Me}{e}
    \newcommand{\Mp}{\varepsilon}
    \newcommand{\Mk}{k}
    \newcommand{\Ml}{\ell}
    \newcommand{\Mh}{h}
    \newcommand{\Mkk}{\hat{k}}
    \newcommand{\Mhh}{\hat{h}}
    \newcommand{\Mi}{\mathfrak{i}}
    \newcommand{\McalE}{{\mathcal{E}}}\newcommand{\McalH}{{\mathcal{H}}}\newcommand{\McalP}{{\mathcal{P}}}
    \newcommand{\MbbC}{{\mathbb{C}}}\newcommand{\MbbP}{{\mathbb{P}}}\newcommand{\MbbR}{{\mathbb{R}}}\newcommand{\MbbZ}{{\mathbb{Z}}}
    \newcommand{\Mtlf}{{\tilde{f}}}
    \newcommand{\MmfF}{{\mathfrak{F}}}\newcommand{\MmfM}{{\mathfrak{M}}}
    \newcommand{\LEM}[1]{Lemma~\ref{lem:#1}}
    \newcommand{\PRP}[1]{Proposition~\ref{prp:#1}}
    \newcommand{\COR}[1]{{Corollary~\ref{cor:#1}}}
    \newcommand{\THM}[1]{{Theorem~\ref{thm:#1}}}
    \newcommand{\DEF}[1]{{Definition~\ref{def:#1}}}
    \newcommand{\RMK}[1]{{Remark~\ref{rmk:#1}}}
    \newcommand{\EQN}[1]{{(\ref{eqn:#1})}}
    \newcommand{\SEC}[1]{{\textsection\ref{sec:#1}}}
    \newcommand{\ALG}[1]{{Algorithm~\ref{alg:#1}}}
    \newcommand{\EXM}[1]{{Example~\ref{exm:#1}}}
    \newcommand{\ASM}[1]{{Assumption~\ref{asm:#1}}}
    \newcommand{\Mmclaim}[1]{\item[\textbf{#1)}]}
    \newcommand{\Mrefmclaim}[1]{#1)}
    \newcommand{\Mclaim}[1]{\textit{Claim #1:}}
    \newcommand{\Mrefclaim}[1]{claim #1}
\title{Minimal degree rational curves on real surfaces}
\author{Niels Lubbes}
\date{\today}
\begin{document}

\maketitle

\begin{abstract}
We classify real families of minimal degree rational curves that cover 
an embedded rational surface.
A corollary is that
if the projective closure of a smooth surface is not biregular isomorphic to the projective closure of the unit-sphere, 
then the set of minimal degree rational curves that cover
the surface is either empty or of dimension at most two.
Moreover, if these curves are of minimal degree over the real numbers,
but not over the complex numbers, then almost all the curves are smooth.
Our methods lead to an algorithm that takes as input a 
real surface parametrization and outputs all real 
families 
of rational curves of lowest possible degree
that cover the image surface.
\\[2mm]
{\bf Keywords:} families of curves, real surfaces, weak del Pezzo surfaces, Neron-Severi lattice, linear series, adjunction
\\[2mm]
{\bf MSC2010:} 14Q10, 14D99, 14C20, 14P99, 14J26, 14J10 
\end{abstract}

\begingroup
\def\addvspace#1{\vspace{-1mm}}
\tableofcontents
\endgroup

\section{Introduction}

Lines play a central role in classical geometry and 
have been further developed as geodesics in Riemannian geometry.
From an algebro geometric point of view, we can consider lines as rational curves of minimal degree.
Rational curves of low degree play an important role in the complex classification 
of higher dimensional varieties \citep[Chapter V]{kol1}, \citep[page 342]{hwa1}.

We present in \THM{cls} a classification of 
real families of minimal degree rational curves that cover an 
$\MbbR$-rational surface in projective space.
In \SEC{fam} we define such families as \Mdef{minimal families}. 
See \SEC{rat} for the definition of $\MbbR$-rationality. 
The hypothesis for \THM{cls} requires actually less than $\MbbR$-rationality, but needs terminology
from \SEC{chain}.
We conjecture that the $\MbbR$-rationality assumptions in \THM{cls} and \COR{cls} can be omitted.

The plane is covered by a 2-dimensional family of lines, and --- over the 
complex numbers --- any smooth projective surface that is covered by a minimal family of dimension at least two,
must be biregular isomorphic to the projective plane.
It follows from our classification that --- over the real numbers ---
some surfaces are covered by both 1-dimensional and 2-dimensional minimal families (see \EXM{chain}).

In a time span of two thousand years, geometers have dedicated their lives  
to prove the parallel postulate using Euclid's first four postulates,
before finally the projective closure of the unit-sphere turned out to be a natural space for non-Euclidean geometry.
Indeed, the 2-sphere is covered by a 3-dimensional family of circles.
When are curves on a real surface like circles on the sphere?
\COR{cls}.\Mrefmclaim{a} provides a characterization.
 
\begin{corollary}
\textbf{(classification of real minimal families)}
\label{cor:cls}
\begin{itemize}[topsep=0pt, itemsep=0pt]
\Mmclaim{a}
Suppose that $X\subset\MbbP^n$ is the projective closure of an 
$\MbbR$-rational surface $S\subset\MbbR^n$ that is covered by a
family of real rational curves that are of minimal degree.
If this family is of dimension at least three, 
then the linear normalization of $X$
is biregular isomorphic to the projective closure of the unit-sphere.

\Mmclaim{b}
The canonical degree of a minimal family on a $\MbbR$-rational embedded surface 
is either -2, -3 or -4, such that the family is of dimension 1, 2 and 3 \Mresp.

\Mmclaim{c}
A minimal family on a $\MbbR$-rational embedded surface 
that is not minimal over the complex numbers,
must be complete. In other words, a minimal family of curves
that are singular outside the singular locus of the surface
must also be minimal over the complex numbers.
\end{itemize}
\end{corollary}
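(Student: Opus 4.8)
The plan is to deduce items \Mrefmclaim{a}--\Mrefmclaim{c} from the classification of minimal families in \THM{cls}: each of the three assertions is, in effect, a geometric reading of that list, so the work is translation rather than new geometry. Throughout I would fix a general member $C$ of the minimal family, let $\pi\colon\MtlX\to X$ be a resolution of the singularities of the linear normalization of $X$, and let $\MtlC$ be the strict transform of a general member; then $\MtlX$ is a smooth $\MbbR$-rational surface, $\MtlC$ has geometric genus $0$, and the minimal family pulls back to a covering family of such $\MtlC$ of minimal $\pi^\ast H$-degree.

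For item \Mrefmclaim{b} I would argue by adjunction on the smooth rational surface $\MtlX$: we have $\MtlC^2 + K_{\MtlX}\cdot\MtlC = 2p_a(\MtlC)-2$, while Riemann--Roch together with $\chi(\McalO_{\MtlX})=1$ and the vanishing of the higher cohomology of a nef divisor on a rational surface gives $\dim|\MtlC| = (\MtlC^2 - K_{\MtlX}\cdot\MtlC)/2$ and $p_a(\MtlC) = 1 + (\MtlC^2 + K_{\MtlX}\cdot\MtlC)/2$. A Severi-type dimension count --- imposing the $p_a(\MtlC)$ nodes that make a general member of $|\MtlC|$ rational drops the dimension by $p_a(\MtlC)$ --- then shows that the dimension $d$ of the minimal family equals $-1-K_{\MtlX}\cdot\MtlC$, independently of $p_a(\MtlC)$. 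Since \THM{cls} leaves only the possibilities $d\in\{1,2,3\}$, the canonical degree of the family equals $K_{\MtlX}\cdot\MtlC=-1-d$, hence is $-2,-3,-4$ in dimensions $1,2,3$ respectively.

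For item \Mrefmclaim{a}, if $d\ge 3$ then by \Mrefmclaim{b} we are in the case $d=3$, $K_{\MtlX}\cdot\MtlC=-4$, and through a general point of $X$ there passes a $2$-dimensional subfamily of these minimal-degree rational curves. Matching this against \THM{cls}, the unique entry with $d=3$ is the $3$-dimensional family of hyperplane sections of a smooth quadric surface whose real structure interchanges the two rulings of $\MbbP^1\times\MbbP^1$ --- equivalently a quadric whose real locus is a $2$-sphere; passing to the linear normalization then identifies $X$ with that quadric, i.e. with the projective closure of the unit sphere.

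For item \Mrefmclaim{c}, I would run through the list of \THM{cls} and isolate the entries in which the minimal real degree strictly exceeds the minimal complex degree; by the structure of the list these are exactly the ``sphere-type'' entries, in which the lower-degree complex rational curves occur in complex-conjugate families with no real member, and the real minimal family is then a complete linear system whose general member is smooth (the pencil of conics in dimension $1$, or the system of conic sections of the sphere-quadric in dimension $3$). Completeness is then immediate, and genericity of smoothness gives the contrapositive: a minimal family whose general member is singular away from $\mathrm{Sing}(X)$ is already minimal over $\MbbC$. The main obstacle is the bookkeeping in \Mrefmclaim{a} and \Mrefmclaim{c}: one must recognise the sphere-type quadric among the Neron--Severi data of \THM{cls} and separate it from the hyperboloid real form, which does carry a real ruling and hence a $1$-dimensional minimal family of lines rather than conics; and for \Mrefmclaim{c} one must verify, case by case in the list, that whenever the real minimal degree exceeds the complex one the surplus is forced to be spent on resolving singularities, so that no new singular rational curve of the real-minimal degree can appear.
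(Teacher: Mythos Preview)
Your overall strategy --- read each assertion off the list in \THM{cls} --- is the paper's, and part \Mrefmclaim{a} goes through essentially as written there (the unique class with $d=3$ is $\Ml_0+\Ml_1$ with $\sigma_*(\Ml_0)=\Ml_1$, so the smooth model is $\MbbP^1\times\MbbP^1$ with the flip involution). The two places where your proposal has gaps are \Mrefmclaim{b} and, more seriously, \Mrefmclaim{c}.

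For \Mrefmclaim{b}, the identity $d=-1-\Mk\cdot[\MtlC]$ is correct for every class in \THM{cls}, but your Severi-type justification (``imposing $p_a$ nodes drops the dimension by $p_a$'') is a nontrivial statement you do not prove; Severi varieties need not have the expected dimension. The paper avoids this: for the complete classes one simply computes $h^0$ by Riemann--Roch and Serre duality (all of them have $p_a=0$, so your formula is then just RR); for the two incomplete classes in $\Psi_2$ and $\Psi_4$ the paper has already shown in \LEM{psi2a}, \LEM{psi2b}, \LEM{psi4} that the rational members are pullbacks of tangent or bitangent hyperplane sections of an explicit branching curve --- visibly a one-parameter family. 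You should cite those lemmas rather than invoke a Severi count.

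For \Mrefmclaim{c} your diagnosis is off. The cases where the real minimal degree strictly exceeds the complex one are \emph{not} only the sphere quadric: \EXM{chain} has real minimal class $2\Me_0-\Me_1-\Me_2-\Me_3-\Me_4$ while the complex minimal classes are $\Me_0-\Me_1$ and $\Me_0-\Me_2$, swapped by $\sigma_*$. And the phrase ``surplus forced to be spent on resolving singularities'' is the wrong picture; singularities play no role in the gap between real and complex minimality. The clean argument is the contrapositive. The only \emph{incomplete} minimal families in the whole classification are those with class in $\Psi_2\cup\Psi_4$ (table in \PRP{mp}). These arise only in case~(i) of \THM{cls}, where $\Mh_\ell=-\Mk_\ell$ and $\Mk_\ell\cdot f_\ell=-2$, hence $\Mh_\ell\cdot f_\ell=2$. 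But over $\MbbC$ every rational covering family also satisfies $\Mk_\ell\cdot g\le-2$ by \LEM{kf}, so $\Mh_\ell\cdot g\ge 2$; thus $f_\ell$ is already $\MbbC$-minimal at level $\ell$, and \PRP{pp}\Mrefmclaim{a} (whose proof works verbatim over $\MbbC$) pulls this back to make $f_0$ $\MbbC$-minimal on $Y_0$. Hence every incomplete real minimal family is automatically $\MbbC$-minimal, which is exactly \Mrefmclaim{c}. This is what the paper does via \RMK{complex}: the $\MbbC$-minimal classes are those of case~(i), $\Me_0$, or $\Me_0-\Me_j$, and every real minimal class \emph{outside} that list --- the ``remaining cases'' of \THM{cls} --- has $p_a=0$ and is complete.
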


We make the notions of canonical degree and completeness of minimal families precise in \SEC{fam}.
See \SEC{chain} for the definition of linear normalization.
For \COR{cls}.\Mrefmclaim{a} of \THM{cls}, we mention that if $X$ is smooth, then it is biregular isomorphic to its 
linear normalization. 
Moreover, by \citep[VI.6.5]{sil1}, we can replace the $\MbbR$-rationality hypothesis with 
the condition that the real points of $X$ form a connected set.
A remark for \COR{cls}.\Mrefmclaim{b} is that the canonical degree of \emph{complex} minimal families is either -2 or -3.
The canonical degree of a complex minimal rational curve in a family on a smooth $n$-dimensional Fano variety 
is greater than $-(n+1)$ \citep[Theorem~V.1.6]{kol1}.   

Minimal families that cover a given surface, give insight into the 
geometry of this surface.
For example, it was discovered by \citep[Christopher Wren, 1669]{wrn1}
that a one-sheeted hyperboloid contains two lines through each point.
Sir Wren used his discovery for an {\it``engine designed for grinding hyperbolic lenses''} \citep[page~92]{bur1}.
It was shown by astronomer Yvon Villarceau,
that the ring torus is covered by a minimal family of {\it Villarceau circles} \citep[1848]{vil1}. 
These classical discoveries already indicate the interest of minimal degree rational curves in 
geometric modeling. 
Surfaces that are covered by lines or conics are of recent interest in architecture \cite{pot2}.
This article is therefore meant interdisciplinary and our methods are constructive.
We present \ALG{fam} for computing minimal families that cover a given real rational surface.

The idea for the classification in \THM{cls} is
to construct a birational map  $\Mdasharrow{X}{}{Y}$, where $Y$ 
is either a geometrically ruled surface or a weak del Pezzo surface. 
The method of adjunction in \SEC{chain} ensures that this map is (almost) unique up to biregular isomorphism. 
The generators of the Neron-Severi lattice of $Y$,
together with the classes of the pullback of exceptional curves that are contracted
by the map, generate the Neron-Severi lattice of $X$ in a unique way.
In \THM{cls} we classify the divisor classes of minimal degree rational curves \Mwrt these generators.
Notice that the intersection numbers between families
is a topological property of the real surface and can be recovered from this classification.

We classified complex minimal families in \citep[Theorem~46]{nls1} and \citep[Theorem~10]{nls-f3}.
This paper is mostly self-contained and we recover in \RMK{complex} the complex classification as well.
In \COR{conic} we classify families of conics that cover a real surface.
\COR{conic} extends the classification of multiple conical surfaces in \citep[Theorem~8 and Theorem~10]{sch6}.

\section{Preliminaries}

\subsection{}
\label{sec:rat}
A real variety $X$ is defined as a complex variety together with 
an antiholomorphic involution $\Mrow{\sigma}{X}{X}$, which represents the \Mdef{real structure}. 
We implicitly assume that all structures are compatible with $\sigma$ unless explicitly stated otherwise.
For example, if $\Mdasharrow{\MbbP^2}{}{X}$ is a birational map, then
$X$ is rational over the real numbers. We say in this case that $X$ is \Mdef{$\MbbR$-rational}.

\subsection{}
\label{sec:ns}
Recall that the algebraic-, numerical- and linear-equivalence
relations on divisor classes are the same on rational surfaces.
Due to the constructive nature of this paper we make the data associated to the
Neron-Severi lattice explicit. 
The \Mdef{Neron-Severi lattice} $N(X)$ (or \Mdef{NS-lattice} for short) of a rational surface~$X\subset \MbbP^n$ 
consists of the following data:
\begin{enumerate}
\item
A unimodular lattice defined by divisor classes on its smooth model~$Y$ modulo numerical equivalence.
Recall that a \Mdef{smooth model} of a singular surface~$X$ is a birational morphism $\Marrow{Y}{}{X}$ 
from a nonsingular surface~$Y$, that does not contract exceptional curves.

\item
A basis for the lattice.
We shall consider two different bases for $N(X)$:
\begin{itemize}[topsep=0pt]
\item \Mdef{type 1}: $\Mmod{\Me_0,\Me_1,\ldots,\Me_r}$ where the nonzero intersections are $\Me_0^2=1$ and $\Me_j^2=-1$ for $0<j\leq r$,
\item \Mdef{type 2}: $\Mmod{\Ml_0,\Ml_2, \Mp_1,\ldots,\Mp_r}$ 
\Mst the nonzero intersections are $\Ml_0\cdot \Ml_1=1$ and $\Mp_j^2=-1$ for $0<j\leq r$.
\end{itemize}

\item
A unimodular involution $\Mrow{\sigma_*}{N(X)}{N(X)}$ induced by the real structure of~$X$.

\item
A function $\Mrow{h^0}{N(X)}{\MbbZ_{\geq0}}$ assigning the dimension of global sections
of the line bundle associated to a class.

\item
Two distinguished elements $\Mh,\Mk\in N(X)$ corresponding to
class of a hyperplane sections and the canonical class respectively.
\end{enumerate}

\subsection{}
\label{sec:fam}

Suppose that $X\subset \MbbP^n$ is a surface, 
$B$ a smooth variety and $F\subset X\times B$ a divisor.
We call $F$ a \Mdef{family of curves} of $X$, or \Mdef{family} for short,
if the second projection $\Mrow{\pi_2}{F}{B}$ is dominant. 
A \Mdef{member} of $F$, 
corresponding to $b\in B$, is defined as the curve $F_b:=(\pi_1\circ\pi_2^{-1})(b)\subset X$.
We can associate to a curve $C\subset X$ its class $[C]\in N(X)$ \Mst
classes $[C]$ and $[C']$ are equal \Miff $C$ and $C'$ are members of some family.
The class $[F]$ of $F$ is defined as the class of any of its members.
\begin{itemize}[topsep=0pt]
\item
We call $F$ \MdefAttr{covering}{family} if the first projection $\Mrow{\pi_1}{F}{X}$ is dominant. 

\item
We call $F$ \MdefAttr{rational}{family} if the general member of $F$ has geometric genus $0$.

\item
The \MdefAttr{dimension}{family} of $F$ is defined as $\dim B$. Thus a 0-dimensional family consists of a single curve.
If $F$ is complete, then $\dim F = h^0([F])-1$.

%

\item
The \Mdef{degree} of $F$ is defined as the degree of any member \Mwrt the embedding $X\subset\MbbP^n$.
Equivalently, the degree of $F$ is $\Mh\cdot [F]$, where $\Mh\in N(X)$ denotes the class of hyperplane sections. 

\item
The \Mdef{canonical degree} of $F$ is defined as $\Mk\cdot [F]$, where $\Mk\in N(X)$ denotes the canonical class.
 
\item 
We call $F$ \MdefAttr{minimal}{family} if $F$ is a rational covering family and of minimal degree 
\Mwrt all rational covering families of $X$.

\item 
We call $F$ \MdefAttr{complete}{family} if there exists a curve $C\subset X$
\Mst the set $\Mset{C'\subset X}{ [C']=[C] }$ defines exactly the set of members of $F$.
In other words, $F$ forms a complete linear series.

\end{itemize}
We denote the classes of minimal families of $X\subset \MbbP^n$ as 
\[
S(X,\Mh):=\Mset{ [F]\in N(X) }{ \Mh\cdot[F]=\theta,~ F \text{ is a rational covering family of } X  }, 
\] 
where $\theta:=\min \Mset{ \Mh\cdot[F]}{ F \text{ is a rational covering family of } X }$.
Recall that $\Mh$ is the class of hyperplane sections.
Notice that we assume that $F$ is real unless explicitly stated otherwise and thus $\sigma_*[F]=[F]$.

\subsection{}
\label{sec:chain}

Let $Y$ be the smooth model of a birationally ruled surface $X\subset \MbbP^n$ 
and let $\Mh\in N(X)$ denote the class of hyperplane sections.
We call $(Y,\Mh)$ a \Mdef{ruled pair}, if $\Mh$ is nef and big and if there are no exceptional curves
that are orthogonal to $\Mh$.
The \Mdef{linear normalization} of $X$ 
is defined as $\varphi_{\Mh}(Y)\subset\MbbP^m$ for $m\geq n$. 
Here $\varphi_{\Mh}$ denotes the map associated to the class of hyperplane sections $\Mh$ 
and $X$ is a linear projection of $\varphi_{\Mh}(Y)$.
If $h^0(\Mh+\Mk)>1$, then an \Mdef{adjoint relation} is defined as
\[
\Mrow{\mu}{(Y,\Mh)}{(Y',\Mh'):=(\mu(Y),\mu_*(\Mh+\Mk))},
\]
where $\Mrow{\mu}{Y}{Y'}$ is a birational morphism that contracts all exceptional curves $E\subset Y$
\Mst $(\Mh+\Mk)\cdot [E]=0$. 
It follows from \citep[Proposition~1]{nls-f6} that $(Y',\Mh')$ is again a ruled pair.

An \Mdef{adjoint chain} is a chain of subsequent adjoint relations
\[
\Marrow{(Y_0,\Mh_0)}{\mu_0}{(Y_1,\Mh_1)}\Marrow{}{\mu_1}{}\ldots\Marrow{}{\mu_{\ell-1}}{(Y_\ell,\Mh_\ell)},
\]
\Mst $h^0(\Mh_\ell+\Mk_\ell)\leq 1$.
We call $(Y_\ell,\Mh_\ell)$ a \Mdef{minimal ruled pair}.

\begin{remark}
The current definition for adjoint relation differs from \citep[Section~3]{nls-f6},
where instead of $h^0(\Mh+\Mk)>1$ we require that 
the nef threshold of $\Mh$ is positive with $\Mh\neq -\Mk$.
A posteriori both definitions are equivalent by \citep[Proposition~1]{nls-f6},
but the current characterization has the advantage of being computable.
\Mend
\end{remark}

\begin{proposition}
\textbf{(adjoint chain)}
\label{prp:chain}
\begin{itemize}[topsep=0pt]
\Mmclaim{a}
If $\Mrow{\mu}{(Y,\Mh)}{(Y',\Mh')}$ is an adjoint relation \Mst $\Mh'^2>0$,
then it is unique up to biregular isomorphism.
Moreover, the contracted exceptional curves are disjoint.

\Mmclaim{b} 
If $Y_0$ is the smooth model of a birationally ruled surface $X\subset \MbbP^n$,
then its adjoint chain exists.

\Mmclaim{c}
If $(Y_\ell,\Mh_\ell)$ is a minimal ruled pair, then either
one of the following holds:
\begin{enumerate}[topsep=0pt]
\item 
The surface $Y_\ell$ is a weak del Pezzo surface \Mst $\Mh_\ell=-\alpha\Mk_\ell$ for 
$\alpha\in\{1,\frac{1}{3},\frac{2}{3},\frac{1}{2}\}$.

\item
The surface $Y_\ell$ is a $\MbbP^1$-bundle \Mst the class of the 
fiber is either $-\frac{2}{\Mh_\ell\cdot\Mk_\ell}\Mh_\ell$ or 
$-\frac{2}{(2\Mh_\ell+\Mk_\ell)\cdot\Mk_\ell}(2\Mh_\ell+\Mk_\ell)$.

\end{enumerate}
\end{itemize}
\end{proposition}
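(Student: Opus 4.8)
The plan is to prove the three parts in turn. For~(a), the curves $E$ contracted by $\mu$ are exceptional, hence $(-1)$-curves with $k\cdot[E]=-1$ by adjunction, so $h\cdot[E]=1$ since $(h+k)\cdot[E]=0$. As all contracted curves lie in the numerical orthogonal complement of $h+k$ in $N(Y)$, we have $\mu^{*}h'=h+k$ and hence ${h'}^{2}=(h+k)^{2}$. Under the hypothesis ${h'}^{2}>0$, the Hodge index theorem makes $(h+k)^{\perp}\subset N(Y)$ negative definite; for two distinct contracted curves $E_1\ne E_2$ the class $[E_1]+[E_2]$ lies in $(h+k)^{\perp}$ and has self-intersection $2\,[E_1]\cdot[E_2]-2$, so negative definiteness forces $[E_1]\cdot[E_2]=0$: the contracted curves are pairwise disjoint. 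A disjoint union of $(-1)$-curves can be simultaneously contracted to a smooth projective surface, and the set of contracted curves together with the class $h'=\mu_{*}(h+k)$ are determined by $(Y,h)$ alone; hence $\mu$ is unique up to biregular isomorphism.

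For~(b), \citep[Proposition~1]{nls-f6} guarantees that the target of an adjoint relation is again a ruled pair, so adjoint relations may be composed for as long as $h^{0}(h_i+k_i)>1$; the only point needing proof is termination. I would argue by a monotone invariant: at a step where $\mu_i$ contracts a curve the Picard number of $Y_i$ strictly drops, while at a step where $\mu_i$ is an isomorphism, Riemann--Roch on the birationally ruled surface $Y_i$ controls the adjoint system and one shows, as in classical adjunction theory (now in the nef--big rather than ample setting prepared in \citep{nls-f6}), that the sectional genus $1+\tfrac12\,h_i\cdot(h_i+k_i)$ does not increase while the degree $h_i^{2}$ eventually strictly decreases. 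Since the Picard number, the sectional genus and $h_i^{2}$ are non-negative integers that cannot all stay fixed as the chain continues, the chain is finite.

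For~(c), write $(Y,h):=(Y_\ell,h_\ell)$ for a ruled pair with $h^{0}(h+k)\le 1$. Riemann--Roch and the bigness of $h$ bound $h\cdot(h+k)$ from above, which together with the Hodge index theorem constrains $h$ tightly relative to $-k$. The plan is to establish the dichotomy: either $h$ is a positive rational multiple of $-k$, or $Y$ admits a $\mathbb P^{1}$-bundle structure over which $h$ has relative degree one — using the classification of ruled pairs in \citep{nls-f6} together with the structure of surfaces of Kodaira dimension $-\infty$. In the first case $-k=\alpha^{-1}h$ is nef and big, so $Y$ is a weak del Pezzo surface; the bound $h^{0}(h+k)\le 1$ forces $\alpha\le 1$, and integrality of $h,k\in N(Y)$ restricts $\alpha$ to finitely many values — namely $\{1,\tfrac13,\tfrac23,\tfrac12\}$ — realised by the anticanonically embedded weak del Pezzo surfaces, the plane, the Veronese surface, and the quadric-type degree-$8$ surfaces respectively. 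In the second case, the fibre class $f$ is the unique primitive class with $f^{2}=0$, $k\cdot f=-2$ and $h\cdot f=1$; solving these numerical relations together with $h^{0}(h+k)\le 1$ exhibits $f$ in terms of $h$ and $k$ as one of the two displayed forms.

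The principal obstacle is part~(c): establishing the dichotomy above, and within the weak del Pezzo branch showing that $h$ is an \emph{exact} rational multiple of $-k$ and determining which multipliers occur, and within the $\mathbb P^{1}$-bundle branch verifying exhaustiveness. The termination bookkeeping in~(b) is a secondary difficulty, and~(a) is essentially immediate once the Hodge-index observation is in place.
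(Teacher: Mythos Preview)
The paper does not prove this proposition at all: its proof consists of three one-line citations to \cite{nls-f6} (Lemma~1 for~(a), Proposition~3 for~(b), Proposition~2 for~(c)). So there is nothing to compare against beyond whether your independent argument actually goes through.

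Your argument for~(a) is correct and is presumably the content of the cited lemma. The one point to tighten is the order of implications: you invoke $\mu^{*}h'=h+k$ to get $(h+k)^{2}=h'^{2}>0$, and then use Hodge index on $(h+k)^{\perp}$ to prove disjointness. The identity $\mu^{*}h'=h+k$ follows because, writing $h+k=\mu^{*}\mu_{*}(h+k)+V$ with $V$ supported on the exceptional locus, the defining condition $(h+k)\cdot E=0$ for every contracted curve together with negative definiteness of the exceptional lattice forces $V=0$. That closes the apparent circularity.

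Parts~(b) and~(c), however, are plans rather than proofs, and each has a concrete gap. In~(b), your termination bookkeeping does not hold as stated: when $\mu_i$ is an isomorphism you assert $h_i^{2}$ ``eventually strictly decreases'', but $h_i^{2}-h_{i+1}^{2}=h_i^{2}-(h_i+k_i)^{2}=-k_i\cdot(2h_i+k_i)$, and you have not shown this is positive. The sectional genus does stay constant in that case, so you need a different strictly decreasing quantity, or the nef-threshold argument the paper's remark alludes to.

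In~(c) the dichotomy ``either $h$ is a rational multiple of $-k$, or $Y$ is a $\mathbb P^{1}$-bundle with $h\cdot f=1$'' is asserted but not established; this is the whole content of the cited \cite[Proposition~2]{nls-f6} and does not follow from Riemann--Roch and Hodge index alone. Moreover your $\mathbb P^{1}$-bundle description misses one of the two listed subcases: when the fibre class is $-\tfrac{2}{h\cdot k}\,h$ one has $h^{2}=0$ and $h\cdot f=0$, not $h\cdot f=1$. Finally, ``integrality restricts $\alpha$ to finitely many values'' does not by itself single out $\{1,\tfrac13,\tfrac23,\tfrac12\}$; you need to run through the weak del Pezzo degrees $1\le k^{2}\le 9$ and check which admit $h=-\alpha k$ with $h$ integral, nef, big, and $h^{0}(h+k)\le 1$. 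As you yourself flag, (c) is where the real work lies, and what you have written does not yet supply it.
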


\begin{proof}
Assertion \Mrefmclaim{a} follows from \citep[Lemma~1]{nls-f6}.
It follows from \citep[Proposition~3]{nls-f6} that \Mrefmclaim{b} holds.
Assertion \Mrefmclaim{c} is a consequence of \citep[Proposition~2]{nls-f6}.  
\end{proof}

The following lemma is an adaption of \citep[Theorem~4.6]{sil1} which is 
attributed to Comessatti. See also \citep[Theorem~1.9]{kol2}. 
We generalize the result to also 
include singular surfaces, as many basic surfaces that occur in 
geometric modelling are singular. For example the ring torus
is a weak del Pezzo surface and its linear normalization in $\MbbP^4$
has four complex conjugate isolated singularities.

\begin{lemma}
\textbf{(classification of rational real surfaces)}
\label{lem:basis}
\\
If $Y_0$ is a $\MbbC$-rational real surface, then
there exists an adjoint chain
$\Marrow{(Y_0,\Mh_0)}{\mu_0}{(Y_1,\Mh_1)}\Marrow{}{\mu_1}{}\ldots\Marrow{}{\mu_{\ell-1}}{(Y_\ell,\Mh_\ell)}$,
\Mst one of the following holds:
\begin{enumerate}[topsep=0pt]
\item 
The surface $Y_\ell$ is a blowup of $\MbbP^2$.
One has that $N(Y_0)\cong \Mmod{\Me_0,\Me_1,\ldots,\Me_r}$ and $N(Y_\ell)\cong \Mmod{\Me_0,\Me_1,\ldots,\Me_s}$ for $0\leq s\leq r$ and $s\leq 8$
so that $\sigma_*(\Me_0)=\Me_0$, $\sigma_*(\{\Me_1,\ldots,\Me_r\})=\{\Me_1,\ldots,\Me_r\}$ and $\Mk_0=-3\Me_0+\Me_1+\ldots+\Me_r$.
The class $\Me_0$ is the pullback of the class of lines in $\MbbP^2$.
The classes $\Me_j$ for $s< j\leq r$ are pullbacks of
classes of exceptional curves that are contracted by some $\mu_i$ for $0\leq i\leq \ell$.
If $\mu_{i*}(\Me_j)=0$ and $\mu_{i'*}(\Me_{j'})=0$ \Mst $j>j'$, then $i\leq i'$.

\item 
The surface $Y_\ell$ is a blowup of $\MbbP^1\times\MbbP^1$.
One has
$N(Y_0)\cong \Mmod{\Ml_0,\Ml_1,\Mp_1,\ldots,\Mp_r}$ and $N(Y_\ell)\cong \Mmod{\Ml_0,\Ml_1,\Mp_1,\ldots,\Mp_s}$ for $0\leq s\leq r$ and $s\leq 7$
so that $\sigma_*(\{\Ml_0,\Ml_1\})=\{\Ml_0,\Ml_1\}$, $\sigma_*(\{\Mp_1,\ldots,\Mp_r\})=\{\Mp_1,\ldots,\Mp_r\}$ and 
$\Mk_0=-2(\Ml_0+\Ml_1)+\Mp_1+\ldots+\Mp_r$.
The classes $\Ml_0$ and $\Ml_1$ are pullbacks of the class of the fiber of first and second projections of $\MbbP^1\times\MbbP^1$
\Mresp.
The classes $\Mp_j$ for $s< j\leq r$ are pullbacks of
classes of exceptional curves that are contracted by some $\mu_i$ for $0\leq i\leq \ell$.
If $\mu_{i*}(\Mp_j)=0$ and $\mu_{i'*}(\Mp_{j'})=0$ \Mst $j>j'$, then $i\leq i'$.

\item 
There exists $f\in S(Y_\ell,\Mh_\ell)$ \Mst $\Mk_\ell\cdot f=-2$ and $f^2=0$.

\item 
The surface $Y_0$ is not $\MbbR$-rational and $1\leq \Mk_\ell^2\leq 2$. 

\end{enumerate}
Moreover, in cases (i) and (ii) there are natural inclusions $\Mhookrow{\iota}{N(Y_i)}{N(Y_0)}$ for all $0<i\leq \ell$ 
and these inclusions preserve the generators of the bases.
\end{lemma}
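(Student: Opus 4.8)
The plan is to build the required adjoint chain directly from \PRP{chain}, inspect its endpoint, and then invoke the Comessatti-style classification of real del Pezzo and conic-bundle surfaces, adapted to allow $(-2)$-curves, both to decide which of the four cases occurs and to read off the lattice data. First I would apply \PRP{chain}.\Mrefmclaim{b} to obtain an adjoint chain $(Y_0,\Mh_0)\to\cdots\to(Y_\ell,\Mh_\ell)$ ending at a minimal ruled pair, and \PRP{chain}.\Mrefmclaim{c} to split into two cases. If $Y_\ell$ is a $\MbbP^1$-bundle, I take $f$ to be the class of a fibre: then $f^2=0$, and the genus formula applied to a fibre $\cong\MbbP^1$ gives $\Mk_\ell\cdot f=-2$. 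The fibres form a real rational covering family, and it is of minimal degree because a rational curve cannot dominate a base curve of positive genus, while on a Hirzebruch surface a degree comparison between the $\sigma_*$-invariant classes shows that a real ruling has least $\Mh_\ell$-degree among rational covering families; hence $f\in S(Y_\ell,\Mh_\ell)$ and case (iii) holds.

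Now suppose $Y_\ell$ is a weak del Pezzo surface with $\Mh_\ell=-\alpha\Mk_\ell$; it is $\MbbC$-rational of degree $\Mk_\ell^2\in\{1,\ldots,9\}$. I would pass to its anticanonical model, a del Pezzo surface with at worst ADE singularities and the induced real structure --- the $(-2)$-curves and $(-2)$-configurations that get contracted are permuted by $\sigma_*$, so this is $\sigma$-equivariant --- and apply the real classification of \citep[Theorem~4.6]{sil1} and \citep[Theorem~1.9]{kol2} (originally due to Comessatti). There are then three possibilities. Either $Y_\ell$ carries a real conic-bundle structure, and taking $f$ to be the class of a conic gives $f^2=0$, $\Mk_\ell\cdot f=-2$ and, as above, $f\in S(Y_\ell,\Mh_\ell)$, so case (iii) holds. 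Or, after a $\sigma$-equivariant sequence of blow-downs of disjoint real $(-1)$-curves and of conjugate pairs of disjoint $(-1)$-curves, $Y_\ell$ is mapped onto $\MbbP^2$ or onto $\MbbP^1\times\MbbP^1$, giving cases (i) or (ii). Or $Y_0$ is not $\MbbR$-rational, in which case the Comessatti bound forces $1\le\Mk_\ell^2\le 2$, using that a minimal real del Pezzo surface of degree $\ge 3$ with a real point is $\MbbR$-rational, giving case (iv).

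It remains to fix the bases in cases (i) and (ii) and to produce the inclusions of the "moreover" clause. In case (i) I let $\Me_0\in N(Y_\ell)$ be the pullback of the class of a line under the blow-down $Y_\ell\to\MbbP^2$ and $\Me_1,\ldots,\Me_s$ the classes of its exceptional curves; in case (ii) I take $\Ml_0,\Ml_1$ and $\Mp_1,\ldots,\Mp_s$ analogously. Pulling these back successively through $\mu_{\ell-1},\ldots,\mu_0$ and adjoining, at each step, the classes of the $(-1)$-curves contracted by that $\mu_i$ produces generators of $N(Y_0)$; by \PRP{chain}.\Mrefmclaim{a} the curves contracted by a single $\mu_i$ are disjoint, so each pullback is an isometric embedding sending chosen generators to chosen generators, which gives the inclusions $\iota\colon N(Y_i)\hookrightarrow N(Y_0)$. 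The blow-up formula for the canonical class yields $\Mk_0=-3\Me_0+\Me_1+\cdots+\Me_r$, respectively $\Mk_0=-2(\Ml_0+\Ml_1)+\Mp_1+\cdots+\Mp_r$; the stated action of $\sigma_*$ is immediate since the generators were chosen $\sigma$-equivariantly; and relabelling the $\Me_j$ (resp. $\Mp_j$) so that curves contracted later receive smaller indices and conjugate pairs stay adjacent gives the ordering condition.

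I expect the real-classification step to be the main obstacle: carrying the Comessatti/Silhol analysis from smooth del Pezzo surfaces over to the weak case --- checking that the $(-2)$-curve contractions, the detection of a real conic-bundle structure, and the equivariant chain of real and conjugate $(-1)$-blow-downs can all be performed compatibly with $\sigma$, and that the degree bound in case (iv) is not spoiled by the ADE singularities of the anticanonical model. The remaining steps are bookkeeping with the blow-up formulas and with \PRP{chain}.
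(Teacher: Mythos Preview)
Your overall architecture is exactly the paper's: build the adjoint chain via \PRP{chain}, dispose of the $\MbbP^1$-bundle case as case~(3), and for the weak del Pezzo endpoint invoke the Comessatti--Silhol--Koll\'ar trichotomy to land in one of the four cases; the bookkeeping for the bases and the inclusions $\iota$ is also as in the paper (disjointness of the contracted curves from \PRP{chain}.\Mrefmclaim{a}, the canonical class from \citep[(1.41)]{deb1}, relabelling for the ordering condition).

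The one place where you diverge, and where you yourself flag the difficulty, is the passage from smooth to weak del Pezzo. Your suggestion is to contract the $(-2)$-curves, pass to the anticanonical model, and run the real classification there. The paper does \emph{not} do this --- \citep[Theorem~1.9]{kol2} and \citep[Theorem~4.6]{sil1} are stated for smooth surfaces, so the singular anticanonical model is not directly in scope, and lifting the outcome back through the $(-2)$-contractions would be another step you have not supplied. Instead the paper runs the real MMP directly on the smooth weak del Pezzo $Y_\ell$ and argues by contradiction at the terminal object $Z'$: if $Z'$ is not yet in one of the four cases, then numerically there exists either a real $(-1)$-class, a conjugate pair of orthogonal $(-1)$-classes, or a conic-bundle class $u$ with $u^2=0$, $\Mk\cdot u=-2$. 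The key tool is \citep[Lemma~8.2.22]{dol1}, which on a weak del Pezzo decomposes any such numerical class as (class of an honest curve of the expected type) $+$ (effective sum of $(-2)$-classes). A short intersection computation with $\sigma_*$ and the Hodge index theorem then shows that the honest parts are still real (respectively disjoint conjugate, respectively a genuine pencil of conics), so $Z'$ was not MMP-terminal after all. That argument is the substance you are missing; once you insert it, your proof coincides with the paper's.
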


\begin{proof}
It follows from \PRP{chain}.\Mrefmclaim{c} that $Y_\ell$ is either a weak del Pezzo surface 
or a $\MbbP^1$-bundle. In the latter case, the class $f$ of the fiber is 
in $S(Y_\ell,\Mh_\ell)$ with $\Mk_\ell\cdot f=-2$ as asserted in case 3. 
So we may assume \Mwlog that $Y_\ell$ is a weak del Pezzo surface so that $-\Mk_\ell$ is nef and big.

We apply the real minimal model program (MMP) to $Y_\ell$ and contract at each step
a real exceptional curve or disjoint complex conjugate exceptional curves \citep[Theorem~1.8]{kol2}.
By \citep[Proposition~8.1.23]{dol1} the result after each step is again a weak del Pezzo surface.

Let us first assume that $Y_\ell$ is a smooth del Pezzo surface so that $-\Mk$ is ample.
It follows from \citep[Theorem~1.9]{kol2} and \citep[Theorem~4.6]{sil1}
that at the end of MMP, we obtain a surface $Z$ \Mst either 
$Z\cong\MbbP^2$, $Z\cong\MbbP^1\times\MbbP^1$, $Z$ is a conic bundle, 
or $Z$ is not $\MbbR$-rational with $1\leq\Mk_\ell^2\leq 2$ as stated in case 4. 
If $Z$ is a conic bundle and $f\in N(Z)$ is the class of the fiber, 
then $\Mk\cdot f=-2$ by the arithmetic genus formula. 
Either $Z$ is geometrically ruled and 
we are in case 1 or 2 of the lemma or 
$f\in S(Y_\ell,\Mh_\ell)$ as in case 3.

Now suppose that $Y_\ell$ is a weak del Pezzo surface.
Thus $Y_\ell$ is over $\MbbC$ the blowup of the plane in at most 8 points.
The configuration of the centers of blowup are 
determined by the effective (-2)-classes in $N(Y_\ell)$ \citep[Section~8.2.7]{dol1}.
If the centers of blowup are in general position, then there are no effective (-2)-classes.
Suppose by contradiction that after applying the MMP we end up with a weak del Pezzo surface $Z'$ 
that is not covered by case 1, 2, 3 or 4. There are 3 possibilities:
\begin{itemize}[topsep=0pt,itemsep=0pt]
\item[(i)] 
There exists classes $e,e'\in N(Z')$ \Mst $\Mk\cdot e=\Mk\cdot e'=e^2=e'^2=-1$, $e\cdot e'=0$ and $\sigma_*(e)=e'$.
In the smooth del Pezzo scenario $e$ and $e'$ 
would be classes of disjoint complex conjugate exceptional curves that
can be contracted.
By \citep[Lemma~8.2.22]{dol1}, $e=a+c$ where $a$ is the class of 
an exceptional curve, $c$ is the sum of effective (-2)-classes, $c^2=-2$ and $a\cdot c=0$. 
Thus $e=a+c$ and $e'=a'+c'$, where $\sigma_*(c)=c'$ and  $a$, $a'$ are the classes of exceptional curves \Mst $\sigma_*(a)=a'$.
By Hodge index theorem and $\Mk\cdot( c+c')=0$ we have $(c+c')^2<0$ so that either $0\leq c\cdot c'\leq 1$ or $c=c'$.
Since $a$ and $a'$ are classes of lines in the anticanonical embedding we must have $0\leq a\cdot a'\leq 1$.
Notice that $c\neq c'$, otherwise $e\cdot e'=a\cdot a'-2=0$, which is impossible.
Therefore $a\cdot a'=0$, since $e\cdot e'=a\cdot a'+ c\cdot c'+ \Delta=0$ where $\Delta:=a\cdot c'+a'\cdot c\geq 0$.
We arrived at a contradiction, since $Z'$ is not at the end of the MMP
as the disjoint complex conjugate exceptional curves can be contracted. 

\item[(ii)] 
There exists a class $e\in N(Z')$ \Mst $\Mk\cdot e=e^2=-1$ and $\sigma_*(e)=e$. 
In the smooth del Pezzo scenario, $e$ would be the class of a real exceptional curve that
can be contracted. 
By the same arguments as in case (i), we find that there exist a 
real exceptional curve that can be contracted and thus we arrived at 
at contradiction as $Z'$ is not the end result of MMP.

\item[(iii)] 
There exists a class $u\in N(Z')$ \Mst $\Mk\cdot u=-2$ and $u^2=0$. 
In the smooth del Pezzo scenario $u$ would be the class of a conic in a bundle.
Using the same method as in \citep[Lemma~8.2.22]{dol1}, we can show that 
$u=b+c$ where $b$ is class of conic \Mst $\Mk\cdot b +2=b^2=0$ and $c$ is the sum of effective (-2)-classes.
By Riemann-Roch theorem and Kawamata-Viehweg vanishing theorem we find that 
$h^0(b)=2$. Thus we arrived at a contradiction, since $Z'$ is a conic bundle
as covered in case 3.
\end{itemize}
For the remaining assertions for cases 1 and 2
we recall that the contracted exceptional 
curves with class $\Me_i$ for some $i$ are orthogonal by \PRP{chain}.\Mrefmclaim{a}.
We choose an indexing of the exceptional classes so that classes with 
a lower index are contracted later in the chain.
The specification of the canonical class $\Mk_0$ follows from 
\citep[(1.41)]{deb1}.
The remaining details for these cases are now 
straightforward and left to the reader.
\end{proof}

\newpage
\begin{definition}
\textbf{\textit{(adjoint chains of type 1 and type 2)}}
\label{def:chain}
\\
An adjoint chain
is of 
\MdefAttr{type 1}{adjoint chain} or
\MdefAttr{type 2}{adjoint chain},
if 
\LEM{basis}.1 
or \LEM{basis}.2 holds \Mresp.
Similarly, we say that an adjoint relation $\Mrow{\mu_0}{(Y_0,\Mh_0)}{(Y_1,\Mh_1)}$
or surface $Y_0$ is of 
\MdefAttr{type 1}{adjoint relation} or
\MdefAttr{type 2}{adjoint relation}, if it is part of 
an adjoint chain of type 1 or 2 \Mresp.
\Mend
\end{definition}

\begin{lemma}
\textbf{(coordinates of classes)}
\label{lem:coord}
\\
We consider adjoint chain
$\Marrow{(Y_0,\Mh_0)}{\mu_0}{(Y_1,\Mh_1)}\Marrow{}{\mu_1}{}\ldots\Marrow{}{\mu_{\ell-1}}{(Y_\ell,\Mh_\ell)}$.
Let $f_0\in N(Y_0)$ be the class of a curve and define 
$f_i:=(\mu_{i-1}\circ\ldots\circ\mu_0)_*(f_0)$
for $0< i\leq \ell$.
Let $[t]:=1$ if $t\geq 0$ and $[t]:=0$ if $t<0$ for all $t\in\MbbZ$.
\begin{itemize}[topsep=0pt,itemsep=0pt]
\Mmclaim{a}
If the adjoint chain is of type 1, then
\begin{align*}
\Mh_i &= (\alpha_0-3i)\Me_0-[\alpha_1-i](\alpha_1-i)\Me_1-\ldots-[\alpha_r-i](\alpha_r-i)\Me_r,
\\
\Mk_i &= -3\Me_0+[\alpha_1-i]\Me_1+\ldots+[\alpha_r-i]\Me_r, 
\\
f_i   &=
\beta_0\Me_0-[\alpha_1-i]\beta_1\Me_1-\ldots-[\alpha_r-i]\beta_r\Me_r,
\end{align*}
where
$\alpha_0-3i>0$ for $0\leq i\leq \ell$, 
$\beta_j\geq 0$ for $0\leq j\leq r$
and 
$\alpha_t\geq\alpha_{t+1} >0$ for $0< t< r$.

\Mmclaim{b}
If the adjoint chain is of type 2, then
\begin{align*}
\Mh_i &= (\alpha_0-3i)(\Ml_0+\Ml_1)-[\alpha_1-i](\alpha_1-i)\Mp_1-\ldots-[\alpha_r-i](\alpha_r-i)\Mp_r,
\\
\Mk_i &= -2(\Ml_0+\Ml_1)+[\alpha_1-i]\Mp_1+\ldots+[\alpha_r-i]\Mp_r, 
\\ 
f_i   &= \gamma_0\Ml_0+\gamma_1\Ml_1-[\alpha_1-i]\beta_1\Mp_1-\ldots-[\alpha_r-i]\beta_r\Mp_r,
\end{align*}
where
$\alpha_0-3i>0$ for $0\leq i\leq \ell$, 
$\gamma_0,\gamma_1 \geq 0$,
$\beta_j>0$ for $1\leq j\leq r$
and 
$\alpha_t\geq\alpha_{t+1} >0$ for $0< t< r$.
\end{itemize}
\end{lemma}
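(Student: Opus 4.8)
The plan is to prove both assertions simultaneously by induction on the length $\ell$ of the adjoint chain, using the explicit description of a single adjoint relation together with the structure of the bases guaranteed by \LEM{basis}. First I would set up the base case: when $\ell=0$ the chain is just $(Y_0,\Mh_0)=(Y_\ell,\Mh_\ell)$, a minimal ruled pair, and by \LEM{basis} (cases 1 and 2, which are exactly the ``type 1'' and ``type 2'' situations of \DEF{chain}) the surface $Y_\ell$ is a blowup of $\MbbP^2$ or of $\MbbP^1\times\MbbP^1$ with the stated form of $\Mk_\ell$, namely $\Mk_\ell=-3\Me_0+\Me_1+\dots+\Me_s$ respectively $\Mk_\ell=-2(\Ml_0+\Ml_1)+\Mp_1+\dots+\Mp_s$. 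Writing $\Mh_\ell$ in the same basis and invoking \PRP{chain}.\Mrefmclaim{c} (the classification of $\Mh_\ell$ on a minimal ruled pair: for a weak del Pezzo, $\Mh_\ell=-\alpha\Mk_\ell$; for a $\MbbP^1$-bundle, the fiber class is the displayed multiple of $\Mh_\ell$ or $2\Mh_\ell+\Mk_\ell$), one reads off that $\Mh_\ell$ has the coefficient pattern $(\alpha_0)\Me_0-(\alpha_1)\Me_1-\dots$ with $\alpha_j\geq 0$; reordering the exceptional classes so that $\alpha_1\geq\alpha_2\geq\dots$ (allowed by the freedom in \LEM{basis} to index exceptional classes, since at this stage no $\mu_i$ has acted) gives the required inequalities, and the bracket truncations $[\alpha_j-i]$ are all $1$ at $i=\ell=0$ since the $\alpha_j>0$. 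The nef-and-big condition forces $\alpha_0>0$, which is the $i=\ell$ instance of $\alpha_0-3i>0$.

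Next I would run the inductive step \emph{backwards along the chain}, from $(Y_1,\Mh_1)$ down to $(Y_0,\Mh_0)$. Suppose the formulas hold for the truncated chain $(Y_1,\Mh_1)\to\dots\to(Y_\ell,\Mh_\ell)$ with indices shifted by one; I need to pull them back through the single adjoint relation $\Mrow{\mu_0}{(Y_0,\Mh_0)}{(Y_1,\Mh_1)}$. By definition of adjoint relation, $\Mh_1=\mu_{0*}(\Mh_0+\Mk_0)$ and $\mu_0$ contracts precisely the exceptional curves $E$ with $(\Mh_0+\Mk_0)\cdot[E]=0$; by \PRP{chain}.\Mrefmclaim{a} (in the $\Mh_1^2>0$ case) these are disjoint, and by the indexing convention in \LEM{basis} their classes are exactly the lowest-index exceptionals that have not yet been contracted — call them $\Me_j$ for $j$ in the appropriate top range (resp.\ $\Mp_j$). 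The natural inclusion $\Mhookrow{\iota}{N(Y_1)}{N(Y_0)}$ from \LEM{basis} identifies the basis of $N(Y_1)$ with a subset of that of $N(Y_0)$, so that $\Mk_0=\iota(\Mk_1)+\sum(\text{newly appearing }\Me_j\text{'s with coefficient }+1)$, and dually $\Mh_0+\Mk_0=\iota(\Mh_1)$ plus the classes of the contracted curves. Substituting the inductive expression for $\Mh_1,\Mk_1$ (in shifted index $i-1\mapsto i$) and solving $\Mh_0=\iota(\Mh_1)-\Mk_0+(\text{contracted})$ produces exactly the displayed formula for $\Mh_i$ at $i=0$: the shift $\alpha_0-3i\rightsquigarrow\alpha_0-3(i+1)$ absorbing the $-3\Me_0$ from $-\Mk_0$, and each exceptional coefficient $\alpha_j-i\rightsquigarrow\alpha_j-(i+1)$ with the bracket $[\alpha_j-i]$ switching on exactly those classes whose curve is contracted at this stage (i.e.\ $\alpha_j-i=0$ becomes $\alpha_j-i>0$, consistent with $[\cdot]$). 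The same bookkeeping gives $\Mk_i$ and, applying the analogous identity $f_0=\iota(f_1)+\sum(\text{multiplicities of }f_0\text{ along contracted curves})\,\Me_j$ together with the positivity of those multiplicities (a curve of the family meeting a contracted $(-1)$-curve does so nonnegatively, and in type 2 strictly since $\beta_j>0$ is part of the claim — this needs that the family is covering, hence meets every such curve), the formula for $f_i$. Finally the inequality $\alpha_0-3i>0$ for all $0\leq i\leq\ell$ follows because $\Mh_i$ is nef and big for every $i$ (each $(Y_i,\Mh_i)$ is a ruled pair), and $\Mh_i\cdot\Me_0'$-type intersection arguments force the $\Me_0$- (resp.\ $\Ml_0+\Ml_1$-) coefficient to stay positive.

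The main obstacle I expect is the bracket/truncation bookkeeping in the $f_i$ formula and the precise claim $\beta_j>0$ in type 2 versus merely $\beta_j\geq 0$ in type 1. Concretely: one must verify that when a curve of the family $f$ is itself (a component through) the center of a blowup, the pushforward drops the corresponding $\Me_j$ term cleanly and that the surviving coefficients are governed by the same $[\alpha_j-i]$ mask as $\Mh_i$ — i.e.\ that $f_i$ ``loses'' exactly the exceptional directions that $\Mh_i$ loses, no more and no fewer. This is plausible because a family class that is effective on $Y_i$ must be supported away from curves already contracted, and conversely an exceptional class $\Me_j$ with $\alpha_j-i\geq 0$ corresponds to a curve still present on $Y_i$ which the covering family must meet; but making this airtight requires a small argument with the inclusions $\iota$ and the fact (\LEM{basis}) that they preserve generators, plus the observation that in type 2 the two rulings $\Ml_0,\Ml_1$ genuinely both appear with the \emph{same} coefficient $\alpha_0-3i$ in $\Mh_i$ (forced by $\sigma_*$ possibly swapping them, or by the weak del Pezzo relation $\Mh_\ell=-\alpha\Mk_\ell$), which is what lets us write $\gamma_0\Ml_0+\gamma_1\Ml_1$ with independent $\gamma$'s for $f$ but a single $\alpha_0-3i$ for $\Mh$. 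Once that symmetry and the masking are pinned down, the rest is the routine substitution sketched above, which I would relegate to ``straightforward and left to the reader'' in the style of \LEM{basis}.
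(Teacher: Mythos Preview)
The paper's own proof reads, in its entirety: ``The proof is straightforward and left to the reader.'' Your inductive plan---tracking the coordinates of $\Mh_i$, $\Mk_i$, $f_i$ through each adjoint relation via the basis inclusions $\iota$ of \LEM{basis} and the identity $\Mh_{i+1}=\mu_{i*}(\Mh_i+\Mk_i)$---is exactly the intended straightforward verification, so your approach coincides with the paper's (unwritten) one; your flagged worry about the strict inequality $\beta_j>0$ in type~2 is legitimate (the hypothesis is only that $f_0$ is the class of a curve, not of a covering family), but this is an imprecision in the statement rather than a defect in your method.
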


\begin{proof}
The proof is straightforward and left to the reader.
\end{proof}

\begin{example}
\textbf{\textit{(adjoint chain)}}
\label{exm:chain}
\\
We consider the following adjoint chain
\[
\Marrow{(Y_0,\Mh_0)}{\mu_0}{(Y_1,\Mh_1)}
\Marrow{}{\mu_1}{}  
\ldots
\Marrow{}{\mu_5}{(Y_6,\Mh_6)},
\]
where 
\[
\begin{array}{r@{\,=\,}r@{\,}r@{\,}r@{\,}r@{\,}r@{\,}r@{\,}r@{\,}r@{\,}r@{\,}r@{\,}r@{\,}r@{\,}r@{\,}r@{\,}r@{\,}r@{\,}r}
\Mh_0 & 19\Me_0 &-& 6\Me_1 &-& 6\Me_2 &-& 4\Me_3 &-& 4\Me_4 &-& 3\Me_5 &-& 3\Me_6 &-& 2\Me_7 &-& 2\Me_8   \\
\Mh_1 & 16\Me_0 &-& 5\Me_1 &-& 5\Me_2 &-& 3\Me_3 &-& 3\Me_4 &-& 2\Me_5 &-& 2\Me_6 &-&  \Me_7 &-&  \Me_8   \\
\Mh_2 & 13\Me_0 &-& 4\Me_1 &-& 4\Me_2 &-& 2\Me_3 &-& 2\Me_4 &-&  \Me_5 &-&  \Me_6 & &        & &          \\
\Mh_3 & 10\Me_0 &-& 3\Me_1 &-& 3\Me_2 &-&  \Me_3 &-&  \Me_4 & &        & &        & &        & &          \\
\Mh_4 &  7\Me_0 &-& 2\Me_1 &-& 2\Me_2 & &        & &        & &        & &        & &        & &          \\
\Mh_5 &  4\Me_0 &-&  \Me_1 &-&  \Me_2 & &        & &        & &        & &        & &        & &          \\
\Mh_6 &  3\Me_0 & &        & &        & &        & &        & &        & &        & &        & &          \\
\end{array}
\]
The action of the real structure on $N(Y_0)$ is defined by $\sigma_*(\Me_0)=\Me_0$ and $\sigma_*(\Me_i)=\Me_{i+1}$ for $i\in\{1,3,5,7\}$.
Notice that $Y_6\cong\MbbP^2$ and $\Mh_6=-\Mk_6$.
Let $f_0:=2\Me_0-\Me_1-\Me_2-\Me_3-\Me_4$ be the class of the pullback of conics in the plane, through 4 points.
By \LEM{coord} we have $f_0=f_1=f_2=f_3$, $f_4=f_5=2\Me_0-\Me_1-\Me_2$ and $f_6=2\Me_0$.

We will prove in \SEC{min} that $S(Y_0,\Mh_0)=\{~f_0~\}$, $S(Y_1,\Mh_1)=\{~\Me_0,~f_1~\}$ 
and $S(Y_i,\Mh_i)=\{~\Me_0~\}$ for $6\leq i\leq 2$, where $\Me_0$ and $f_1$ are 
classes of one- and two-dimensional minimal families \Mresp.

Over the complex numbers or if the real structure would act as the identity, 
$\Me_0-\Me_1$ and $\Me_0-\Me_2$ 
are classes of minimal families of $Y_i$ for $0\leq i<6$ (see \RMK{complex}).
\Mend
\end{example}

\section{Classification of minimal families}
\label{sec:min}

\subsection{}

In the following lemma we recall the proof of \citep[Proposition ~20]{nls1}
for the convenience of the reader.

\begin{lemma}
\textbf{(canonical degree of families)}
\label{lem:kf}
\\
If $f\in N(Y)$ is the class of a rational family, then $\Mk \cdot f\leq -2$.
\end{lemma}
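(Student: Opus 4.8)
The plan is to transfer the problem from $Y$ to a resolved total space of the family, where one application of adjunction together with the Riemann--Hurwitz formula yields the bound. One cannot argue on $Y$ alone: if $C$ is a general member of the family, then $C$ varies in the family so $C^2\ge 0$, and adjunction $2p_a(C)-2=C^2+\Mk\cdot C$ gives $\Mk\cdot C=-2-C^2\le -2$ \emph{provided} $C$ is smooth, so that $p_a(C)=0$. However, a general member may be singular, with arithmetic genus $p_a(C)>0$ strictly larger than its geometric genus $0$, and then this naive estimate breaks down. Accounting for these singularities is exactly what passing to the total space achieves.

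First I would reduce to a one-dimensional base. Since the members of $F$ vary and cover $Y$, restricting to a suitable complete curve in the base we may assume that $B$ is a smooth projective curve and that $F$ still covers $Y$. Normalizing the universal curve of $F$ over $B$ fibrewise and then resolving singularities, we obtain a smooth projective surface $T$ carrying a surjective morphism $\varphi\colon T\to B$ whose general fibre $\Phi:=\varphi^{-1}(b)$ is a smooth rational curve, together with a dominant, generically finite morphism $\psi\colon T\to Y$ whose restriction $\psi|_\Phi\colon\Phi\to C_b\subset Y$ is the normalization of a member $C_b$ of $F$. In particular $\Phi\cong\MbbP^1$, $\Phi^2=0$ (being a fibre of $\varphi$), and $\psi_*[\Phi]=[C_b]=f$.

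Next I would compare two evaluations of $K_T\cdot\Phi$. Adjunction on the smooth surface $T$ applied to $\Phi\cong\MbbP^1$ gives $K_T\cdot\Phi=2p_a(\Phi)-2-\Phi^2=-2$. On the other hand $\psi$ is a dominant generically finite morphism of smooth surfaces over $\MbbC$, hence generically \'etale, so Riemann--Hurwitz gives $K_T=\psi^*K_Y+R$ with $R$ an effective ramification divisor; its support is a proper closed subset of $T$, and a general fibre of $\varphi$ meets the \'etale locus, so $\Phi\not\subseteq\operatorname{Supp}(R)$ and therefore $R\cdot\Phi\ge 0$. Combining these with the projection formula $\psi^*K_Y\cdot\Phi=K_Y\cdot\psi_*[\Phi]=\Mk\cdot f$, we obtain
\[
-2=K_T\cdot\Phi=\psi^*K_Y\cdot\Phi+R\cdot\Phi=\Mk\cdot f+R\cdot\Phi\ge\Mk\cdot f,
\]
which is the assertion.

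The step I expect to be the genuine obstacle is the one flagged above: because a member of $F$ may be singular, adjunction on $Y$ does not suffice, and one must construct the model $\varphi\colon T\to B$, $\psi\colon T\to Y$ and check its properties, namely that $T$ is smooth with $\varphi$ having smooth rational general fibre $\Phi$, that $\psi|_\Phi$ is the normalization of a member, and that the general $\Phi$ meets the \'etale locus of $\psi$ so that $R\cdot\Phi\ge 0$. I would also record that the hypothesis actually in force is that the members of $F$ vary over $B$, equivalently that $F$ is a covering family; this is what guarantees that $\psi$ is generically finite, without which the statement fails.
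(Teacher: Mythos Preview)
Your proof is correct and takes essentially the same approach as the paper: resolve the total space of the family to a smooth surface $T$ (the paper's $Z$), apply adjunction to the smooth rational general fibre to get $K_T\cdot\Phi=-2$, and then combine Riemann--Hurwitz with the projection formula and effectivity of the ramification divisor. Your closing observation that one really needs $F$ to be covering (so that $\psi$ is generically finite and Riemann--Hurwitz applies) is apt; the paper's proof uses this tacitly as well.
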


\begin{proof}
Let $f\in N(Y)$ be the class of the rational family defined by the divisor $F\subset Y\times B$.
Notice that if $F$ is complete, then $p_a(f)=0$ so that $\Mk\cdot f=-2-f^2\leq -2$.
If $F$ is not complete, then its members are necessarily singular.
We define $\Mrow{\alpha}{Z}{F}$ to be a birational morphism such that $Z$ is a 
smooth model of $F$.
Suppose that $F_b\subset Y$ is a member of $F$, for some $b\in B$.
Let $G_b\subset Z$ be the pullback of $F_b$ along the morphism $\alpha$ composed with
the first projection $\Mrow{\pi_1}{F}{Y}$.
Thus $G\subset Z\times B$ defines a family of $Z$, whose members are~$G_b$ for $b\in B$.
By the pullback formula for divisors we have $\Mk_Z=(\pi_1\circ\alpha)^*\Mk+r$,
where $r\in N(Z)$ is the ramification divisor and $\Mk$ is the canonical class on $Y$.
The following equality relates the canonical degrees of $F$ and $G$: 
\[
\Mk \cdot f=\Mk \cdot (\pi_1\circ\alpha)_* [G]=(\pi_1\circ\alpha)^*\Mk\cdot [G]=\Mk_Z\cdot [G]-r\cdot [G]. 
\]
If $a,b\in B$ \Mst $a\neq b$, then $F_a\neq F_b$ and thus $G_a\cap G_b=\emptyset$,
since $\pi_1\circ\alpha$ is a birational morphism. It follows that $[G]^2=0$. 
By definition, $G_b$ is via
$\pi_1\circ\alpha$ birationally equivalent to $F_b$ and thus $G_b$ is rational, for all~$b\in B$.
We observe that $G_b$ is a fiber of $\Mrow{\pi_2\circ\alpha}{Z}{B}$, for all~$b\in B$.
As a consequence of Sard's theorem, the general fiber is smooth and thus $p_a([G])=0$.
We apply the arithmetic genus formula and we find that $\Mk_Z\cdot [G]=-2-[G]^2=-2$.
By \citep[(1.41)]{deb1} we know that $h^0(r)>0$ and since 
$G\subset Z\times B$ defines a family whose members are irreducible and 
movable, it follows that $G$ is nef so that $[G]\cdot r\geq 0$.
This concludes the proof since $\Mk \cdot f=\Mk_Z\cdot [G]-r\cdot [G]\leq -2$.
\end{proof}

\begin{lemma}
\textbf{(self-intersection of classes of rational curves)} 
\label{lem:self}
\\
If $Y$ is the smooth model of a weak del Pezzo surface 
and $f\in S(Y,-\Mk)$ \Mst $\Mk\cdot f=-2$,
then $f^2\in\{0,2,4\}$.
\end{lemma}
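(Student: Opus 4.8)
The plan is to determine $f^2$ by squeezing it between two bounds together with a parity constraint: I will show that $f^2\ge 0$ follows from the covering hypothesis, that $f^2$ is even by the adjunction formula, and that $f^2\le 4$ by the Hodge index theorem. These three facts force $f^2\in\{0,2,4\}$.

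First, since $f\in S(Y,-\Mk)$ is by definition the class of a rational \emph{covering} family of $Y$ (see \SEC{fam}), that family has no fixed component and its general member sweeps out all of $Y$; hence $f$ is nef (a general member can be chosen to avoid any prescribed irreducible curve, so $f\cdot C\ge 0$ for every irreducible $C$), and in particular $f^2\ge 0$ — equivalently, two distinct general members are distinct irreducible curves and so meet in finitely many points. This is the only step where the covering hypothesis enters, and it is the only place needing a little care. Next, on any smooth projective surface one has $p_a(D)=1+\tfrac{1}{2}\bigl(D^2+\Mk\cdot D\bigr)\in\MbbZ$ for every divisor class $D$, so $D^2+\Mk\cdot D$ is even; taking $D=f$ and using the hypothesis $\Mk\cdot f=-2$ shows that $f^2$ is an even integer. (In fact this gives $p_a(f)=f^2/2$, so the three values $0,2,4$ correspond to $p_a(f)=0,1,2$.)

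The one substantive step is the upper bound. Because $Y$ is the smooth model of a weak del Pezzo surface, $-\Mk$ is nef and big, so $\Mk^2=(-\Mk)^2>0$ and therefore $\Mk^2\ge 1$. Applying the Hodge index theorem to the class $-\Mk$, which has positive self-intersection, and to $f$ yields $\Mk^2\cdot f^2\le(\Mk\cdot f)^2=4$, whence $f^2\le 4/\Mk^2\le 4$. Combining this with $f^2\ge 0$ and the parity of $f^2$ gives $f^2\in\{0,2,4\}$, as claimed. In short, the proof is short: the only genuine idea is to test $f$ against $-\Mk$ via Hodge index, and the only point demanding attention is the nefness of a covering class in the first step, which is entirely standard.
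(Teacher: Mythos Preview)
Your proof is correct and follows essentially the same approach as the paper: parity and nonnegativity of $f^2$ from the adjunction formula (the paper phrases the latter as $p_a(f)=\tfrac{1}{2}f^2\ge 0$ for an irreducible general member, you phrase it as nefness of a covering class), and the upper bound from the Hodge index theorem against $-\Mk$. Your Hodge step is slightly more streamlined, invoking the inequality form $\Mk^2\cdot f^2\le(\Mk\cdot f)^2=4$ directly, whereas the paper reaches the same inequality by writing out the orthogonal class $f+\alpha\Mk$ with $\alpha=f^2/2$ and expanding $(f+\alpha\Mk)^2<0$; the two are equivalent reformulations of the same argument.
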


\begin{proof}
By the adjunction formula one has $p_a(f)=\frac{1}{2}f^2$ and thus $f^2$ is even and nonnegative.
If $f^2\geq 2$, then 
by Hodge index theorem and $f\cdot(f+\alpha\Mk)=0$ for some $\alpha>0$,
it follows that either $f=-\alpha\Mk$ or $(f+\alpha\Mk)^2<0$.
If $(f+\alpha\Mk)^2<0$, then $f^2<\alpha(4-\alpha\Mk^2)$ so that $f^2<4$.
If $f=-\alpha\Mk$ with $\Mk\cdot f=-2$, then either $f=-\Mk$ 
with $\Mk^2=2$ --- or --- $f=-2\Mk$ with $\Mk^2=1$. 
We conclude that $f^2\in\{0,2,4\}$ as asserted.
\end{proof}

\begin{definition}
\textbf{($\Psi_0$, $\Psi_1$, $\Psi_2$ and $\Psi_4$)}
\label{def:psi}
\\
We define $\Psi_0$ to be the maximal set of classes in a NS-lattice of infinite rank \Mst each 
class in $\Psi_0$ is \Mwrt a type 1 basis --- up to permutation of the $(\Me_i)_{i>0}$ --- in the following list:
\begin{center}
{\tiny
\begin{tabular}{r@{ }c@{ }r@{ }c@{ }r@{ }c@{ }r@{ }c@{ }r@{ }c@{ }r@{ }c@{ }r@{ }c@{ }r@{ }c@{ }r}
$  \Me_0$ & $-$ & $ \Me_1$,&     &        &     &        &     &        &     &        &     &        &     &        &     &        \\
$ 2\Me_0$ & $-$ & $ \Me_1$ & $-$ & $ \Me_2$ & $-$ & $ \Me_3$ & $-$ & $ \Me_4$,&     &        &     &        &     &        &     &        \\
$ 3\Me_0$ & $-$ & $2\Me_1$ & $-$ & $ \Me_2$ & $-$ & $ \Me_3$ & $-$ & $ \Me_4$ & $-$ & $ \Me_5$ & $-$ & $ \Me_6$,&     &        &     &        \\
$ 4\Me_0$ & $-$ & $2\Me_1$ & $-$ & $2\Me_2$ & $-$ & $2\Me_3$ & $-$ & $ \Me_4$ & $-$ & $ \Me_5$ & $-$ & $ \Me_6$ & $-$ & $ \Me_7$,&     &        \\
$ 5\Me_0$ & $-$ & $2\Me_1$ & $-$ & $2\Me_2$ & $-$ & $2\Me_3$ & $-$ & $2\Me_4$ & $-$ & $2\Me_5$ & $-$ & $2\Me_6$ & $-$ & $ \Me_7$,&     &        \\
$ 4\Me_0$ & $-$ & $3\Me_1$ & $-$ & $ \Me_2$ & $-$ & $ \Me_3$ & $-$ & $ \Me_4$ & $-$ & $ \Me_5$ & $-$ & $ \Me_6$ & $-$ & $ \Me_7$ & $-$ & $ \Me_8$,\\
$ 5\Me_0$ & $-$ & $3\Me_1$ & $-$ & $2\Me_2$ & $-$ & $2\Me_3$ & $-$ & $2\Me_4$ & $-$ & $ \Me_5$ & $-$ & $ \Me_6$ & $-$ & $ \Me_7$ & $-$ & $ \Me_8$,\\
$ 6\Me_0$ & $-$ & $3\Me_1$ & $-$ & $3\Me_2$ & $-$ & $2\Me_3$ & $-$ & $2\Me_4$ & $-$ & $2\Me_5$ & $-$ & $2\Me_6$ & $-$ & $ \Me_7$ & $-$ & $ \Me_8$,\\
$ 7\Me_0$ & $-$ & $3\Me_1$ & $-$ & $3\Me_2$ & $-$ & $3\Me_3$ & $-$ & $3\Me_4$ & $-$ & $2\Me_5$ & $-$ & $2\Me_6$ & $-$ & $2\Me_7$ & $-$ & $ \Me_8$,\\
$ 7\Me_0$ & $-$ & $4\Me_1$ & $-$ & $3\Me_2$ & $-$ & $2\Me_3$ & $-$ & $2\Me_4$ & $-$ & $2\Me_5$ & $-$ & $2\Me_6$ & $-$ & $2\Me_7$ & $-$ & $2\Me_8$,\\
$ 8\Me_0$ & $-$ & $3\Me_1$ & $-$ & $3\Me_2$ & $-$ & $3\Me_3$ & $-$ & $3\Me_4$ & $-$ & $3\Me_5$ & $-$ & $3\Me_6$ & $-$ & $3\Me_7$ & $-$ & $ \Me_8$,\\
$ 8\Me_0$ & $-$ & $4\Me_1$ & $-$ & $3\Me_2$ & $-$ & $3\Me_3$ & $-$ & $3\Me_4$ & $-$ & $3\Me_5$ & $-$ & $2\Me_6$ & $-$ & $2\Me_7$ & $-$ & $2\Me_8$,\\
$ 9\Me_0$ & $-$ & $4\Me_1$ & $-$ & $4\Me_2$ & $-$ & $3\Me_3$ & $-$ & $3\Me_4$ & $-$ & $3\Me_5$ & $-$ & $3\Me_6$ & $-$ & $3\Me_7$ & $-$ & $2\Me_8$,\\
$10\Me_0$ & $-$ & $4\Me_1$ & $-$ & $4\Me_2$ & $-$ & $4\Me_3$ & $-$ & $4\Me_4$ & $-$ & $3\Me_5$ & $-$ & $3\Me_6$ & $-$ & $3\Me_7$ & $-$ & $3\Me_8$,\\
$11\Me_0$ & $-$ & $4\Me_1$ & $-$ & $4\Me_2$ & $-$ & $4\Me_3$ & $-$ & $4\Me_4$ & $-$ & $4\Me_5$ & $-$ & $4\Me_6$ & $-$ & $4\Me_7$ & $-$ & $3\Me_8$.\\
\end{tabular}
}
\end{center}
We define $\Psi_2$ to be the maximal set of classes in a NS-lattice of infinite rank \Mst each 
class in $\Psi_2$ is \Mwrt a type 1 basis --- up to permutation of the $(\Me_i)_{i>0}$ --- in the following list:
\begin{center}
{\tiny
\begin{tabular}{r@{ }c@{ }r@{ }c@{ }r@{ }c@{ }r@{ }c@{ }r@{ }c@{ }r@{ }c@{ }r@{ }c@{ }r@{ }c@{ }r}
$ 3\Me_0$ & $-$ & $ \Me_1$ & $-$ & $ \Me_2$ & $-$ & $ \Me_3$ & $-$ & $ \Me_4$ & $-$ & $ \Me_5$ & $-$ & $ \Me_6$ & $-$ & $ \Me_7$,&     &          \\
$ 4\Me_0$ & $-$ & $2\Me_1$ & $-$ & $2\Me_2$ & $-$ & $ \Me_3$ & $-$ & $ \Me_4$ & $-$ & $ \Me_5$ & $-$ & $ \Me_6$ & $-$ & $ \Me_7$ & $-$ & $ \Me_8$,\\
$ 5\Me_0$ & $-$ & $2\Me_1$ & $-$ & $2\Me_2$ & $-$ & $2\Me_3$ & $-$ & $2\Me_4$ & $-$ & $ \Me_5$ & $-$ & $ \Me_6$ & $-$ & $ \Me_7$ & $-$ & $ \Me_8$,\\
$ 6\Me_0$ & $-$ & $3\Me_1$ & $-$ & $2\Me_2$ & $-$ & $2\Me_3$ & $-$ & $2\Me_4$ & $-$ & $2\Me_5$ & $-$ & $2\Me_6$ & $-$ & $2\Me_7$ & $-$ & $ \Me_8$,\\
$ 7\Me_0$ & $-$ & $3\Me_1$ & $-$ & $3\Me_2$ & $-$ & $3\Me_3$ & $-$ & $2\Me_4$ & $-$ & $2\Me_5$ & $-$ & $2\Me_6$ & $-$ & $2\Me_7$ & $-$ & $2\Me_8$,\\
$ 8\Me_0$ & $-$ & $3\Me_1$ & $-$ & $3\Me_2$ & $-$ & $3\Me_3$ & $-$ & $3\Me_4$ & $-$ & $3\Me_5$ & $-$ & $3\Me_6$ & $-$ & $2\Me_7$ & $-$ & $2\Me_8$,\\
$ 9\Me_0$ & $-$ & $4\Me_1$ & $-$ & $3\Me_2$ & $-$ & $3\Me_3$ & $-$ & $3\Me_4$ & $-$ & $3\Me_5$ & $-$ & $3\Me_6$ & $-$ & $3\Me_7$ & $-$ & $3\Me_8$,\\
\end{tabular}
}
\end{center}
We define 
$\Psi_1:=\{~ \Me_0 ~\}$ and $\Psi_4:=\{~ 6\Me_0-2\Me_1-\ldots-2\Me_8 ~\}$. 

Notice that $f^2=\alpha$ for all $f\in \Psi_\alpha$.

If an NS-lattice is of rank at least 3,
then we consider the following basis changes between bases of type 1 and type 2:
\begin{align}
\label{eqn:basis}
(\Ml_0,\Ml_1,\Mp_1,\ldots,\Mp_r)\mapsto&(\Ml_0+\Ml_1-\Mp_1,\Ml_0-\Mp_1,\Ml_1-\Mp_1,\Mp_2,\ldots,\Mp_r),
\\
(\Me_0,\Me_1,\ldots,\Me_r)\mapsto&(\Me_0-\Me_2,\Me_0-\Me_1, \Me_0-\Me_1-\Me_2,\Me_3,\ldots,\Me_r).\nonumber
\end{align}
If, for example, $N(Y)$ is a lattice with basis of type 2 and $f\in N(Y)$ \Mst $f=\Ml_0+\Ml_1-\Mp_1-\Mp_2$, then
$f \in \Psi_0$, since we will implicitly use \EQN{basis} after which $f$ is equal to $\Me_0-\Me_3$.
\Mend
\end{definition}

\begin{lemma}
\textbf{(classes of minimal families)} 
\label{lem:psi}
\\
If $Y$ is the smooth model of a weak del Pezzo surface 
and $f\in S(Y,-\Mk)$ \Mst $\Mk\cdot f=-2$,
then $f\in \Psi_0\cup\Psi_2\cup\Psi_4$.
\end{lemma}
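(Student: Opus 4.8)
The plan is to split into the three cases $f^2\in\{0,2,4\}$ allowed by \LEM{self} and to pin down $f$ in each, up to the permutation of the $\Me_i$ built into the definition of $\Psi_\alpha$. Throughout I use that $f$ is nef: a general member of the covering family is an irreducible curve moving in the family, so it meets every irreducible curve of $Y$ non-negatively and $f^2\ge 0$. It is also effective with $h^0(f)\ge 2+\tfrac12 f^2$ by Riemann--Roch, because $h^2(f)=h^0(\Mk-f)=0$ (as $(\Mk-f)\cdot(-\Mk)=-\Mk^2-2<0$ while $-\Mk$ is nef and big). By \LEM{kf} the hypothesis ``$f\in S(Y,-\Mk)$ and $\Mk\cdot f=-2$'' simply says that $f$ is the class of a rational covering family of $(-\Mk)$-degree $2$.

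Suppose $f^2=4$. Then $(f+2\Mk)\cdot f=f^2+2\,\Mk\cdot f=0$ and $(f+2\Mk)^2=4\Mk^2-4\ge 0$, since $\Mk^2\ge 1$ on a weak del Pezzo surface; as $f^2>0$, the Hodge index theorem forces $f+2\Mk\equiv 0$, so $\Mk^2=1$ and $f=-2\Mk=6\Me_0-2\Me_1-\ldots-2\Me_8\in\Psi_4$. Suppose next $f^2=2$, and put $g:=f+\Mk$; then $g\cdot f=0$ and $g^2=\Mk^2-2$, so the Hodge index theorem gives $\Mk^2\le 2$. If $\Mk^2=2$ then $g\equiv 0$ and $f=-\Mk$, which in a type $1$ basis (using \EQN{basis} in the type $2$ case) is $3\Me_0-\Me_1-\ldots-\Me_7\in\Psi_2$. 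If $\Mk^2=1$ then $g^2=-1$ and $\Mk\cdot g=\Mk\cdot f+\Mk^2=-1$, so $g$ is one of the finitely many $(-1)$-classes of $N(Y)$; since $(N(Y),\Mk)$ is isomorphic to the lattice of a blow-up of $\MbbP^2$ at eight points with its canonical class \citep[Chapter~8]{dol1}, these classes are classically known and, up to permutation of $\Me_1,\ldots,\Me_8$, form exactly seven families, one for each ``degree'' $0,1,\ldots,6$; substituting each into $f=-\Mk+g$ yields precisely the seven classes in the definition of $\Psi_2$, so $f\in\Psi_2$.

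Suppose finally $f^2=0$. Then $f$ is a nef, effective, isotropic class with $\Mk\cdot f=-2$; it is primitive (if $f=m\phi$ then $\Mk\cdot\phi$ is even and $\ge -2$, forcing $m=1$), and $|f|$ is a base-point-free pencil realizing $Y$ as a conic bundle over $\MbbP^1$ with fibre class $f$. In a type $1$ basis write $f=a\Me_0-b_1\Me_1-\ldots-b_r\Me_r$ with $r\le 8$ (passing from a type $2$ basis by \EQN{basis}, and disposing of the rank $\le 2$ case, where $Y$ is $\MbbP^1\times\MbbP^1$ or a Hirzebruch surface, by inspection). Nefness gives $a\ge 1$ and $b_i\ge 0$, while $\Mk\cdot f=-2$ and $f^2=0$ give $\sum_i b_i=3a-2$ and $\sum_i b_i^2=a^2$; the Cauchy--Schwarz inequality together with $r\le 8$ then forces $a\le 11$. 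For each such $a$ one lists the integer solutions and, imposing where needed the nefness inequalities against the classical $(-1)$- and $(-2)$-curve classes of weak del Pezzo surfaces, checks that the survivors are exactly the fifteen classes defining $\Psi_0$ (up to permutation of $\Me_1,\ldots,\Me_r$). Hence $f\in\Psi_0$.

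I expect the main obstacle to be this last case: organizing the finite verification so that it is demonstrably exhaustive, and in particular handling weak (not just smooth) del Pezzo surfaces, where the additional $(-2)$-curve inequalities must be taken into account when testing nefness of a candidate class. The cases $f^2=2$ and $f^2=4$, by contrast, reduce cleanly to the Hodge index theorem together with the classical enumeration of $(-1)$-classes.
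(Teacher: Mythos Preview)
Your approach is correct and coincides with the paper's: both invoke \LEM{self} to reduce to $f^2\in\{0,2,4\}$ and then enumerate the lattice classes with $-\Mk\cdot f=2$ and the given self-intersection. The paper's proof simply delegates that enumeration to an external routine \citep[Algorithm~1]{nls-algo-fam}, whereas you carry it out by hand --- cleanly via the Hodge index theorem for $f^2\in\{2,4\}$, and via the Cauchy--Schwarz bound $a\le 11$ for $f^2=0$. One small remark: the extra nefness inequalities against $(-1)$- and $(-2)$-classes that you propose to impose in the $f^2=0$ case are not actually needed; the Diophantine system $\sum_i b_i=3a-2$, $\sum_i b_i^2=a^2$, $b_i\ge 0$, $r\le 8$ already has precisely the fifteen solutions (up to permutation) listed in $\Psi_0$, so the verification is purely arithmetic and independent of which weak del Pezzo surface $Y$ is.
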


\begin{proof}
The set $\Psi_\alpha$ is the output of 
\citep[Algorithm~1]{nls-algo-fam}
with input $-\Mk\cdot f=2$ and $f^2=\alpha$.
By \LEM{self} we have $\alpha\in\{0,2,4\}$
which concludes this proof.
\end{proof}

\begin{lemma}
\textbf{(rational curves with class in $\Psi_0$)}
\label{lem:psi0}
\\
Suppose that $Y$ is the smooth model of a weak del Pezzo surface \Mst $1\leq \Mk^2 \leq 6$.
We have that $\Mk\cdot f=-2$, $f^2=0$ and $h^0(f)=2$ for all $f\in \Psi_0$.
Moreover, if $Y$ is $\MbbR$-rational, then $S(Y,-\Mk)\cap \Psi_0\neq \emptyset$.
\end{lemma}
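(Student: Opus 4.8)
The plan is to establish the two assertions separately. For the first, $f^2=0$ is immediate from \DEF{psi} (where $f^2=\alpha$ for every $f\in\Psi_\alpha$), and $\Mk\cdot f=-2$ holds because $\Psi_0$ is the output of \citep[Algorithm~1]{nls-algo-fam} run with $-\Mk\cdot f=2$, together with the fact that a weak del Pezzo surface of degree $\Mk^2\le 7$ has $\Mk=-3\Me_0+\Me_1+\ldots+\Me_r$ with respect to any type~$1$ basis (and the corresponding expression with respect to a type~$2$ basis via the change of basis \EQN{basis}), so that the intersection form used by that algorithm coincides with the one on $N(Y)$. For $h^0(f)=2$ I would compute $\chi(f)=1+\tfrac12(f^2-\Mk\cdot f)=2$ by Riemann--Roch, observe that $h^2(f)=h^0(\Mk-f)=0$ since $-\Mk$ is nef and big with $(-\Mk)\cdot(\Mk-f)=-\Mk^2-2<0$, and obtain $h^1(f)=0$ from Kawamata--Viehweg vanishing when $f$ is nef (applied to $\Mk+(f-\Mk)$ with $f-\Mk$ nef and big); when $f$ is not nef I would first split off the fixed $(-2)$-curves, reducing to a nef class of the same canonical degree and --- since it is effective and dominated by $f$, which forbids the classes $-\Mk$ and $-2\Mk$ --- still of self-intersection $0$, exactly as in case~(iii) of the proof of \LEM{basis}, a reduction that does not change $h^0$.

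For the second assertion, note first that $(Y,-\Mk)$ is itself a minimal ruled pair: $-\Mk$ is nef and big, each exceptional curve meets it in $1$, and $h^0(-\Mk+\Mk)=h^0(\mathcal O_Y)=1$, so the adjoint chain of $(Y,-\Mk)$ has length zero and $Y_\ell=Y$. Since $Y$ is $\MbbR$-rational it is $\MbbC$-rational, so \LEM{basis} applies, and case~4 is ruled out by $\MbbR$-rationality, leaving cases 1, 2 and 3. In case~3 there is $f\in S(Y,-\Mk)$ with $\Mk\cdot f=-2$ and $f^2=0$; by \LEM{psi} such $f$ lies in $\Psi_0\cup\Psi_2\cup\Psi_4$, and $f^2=0$ forces $f\in\Psi_0$, which settles this case.

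In cases 1 and 2 the surface $Y$ is, as a lattice with real structure, the blowup of $\MbbP^2$ in $r=9-\Mk^2\geq 3$ points, respectively of $\MbbP^1\times\MbbP^1$ in $r=8-\Mk^2\geq 2$ points, with $\sigma_*$ fixing $\Me_0$, respectively stabilising $\{\Ml_0,\Ml_1\}$, and acting by an involution on the remaining basis classes. Here I would take $f$ to be the pullback to $Y$ of a base-point-free pencil chosen $\sigma_*$-invariantly: the pencil of lines through a proper real base point (class $\Me_0-\Me_j$); or, if $\sigma_*$ fixes no exceptional class so that $r$ is even and at least $4$, the pencil of conics through two conjugate pairs of base points (class $2\Me_0-\Me_{i_1}-\Me_{i_2}-\Me_{i_3}-\Me_{i_4}$); and in the type~$2$ situation a ruling (class $\Ml_0=\Me_0-\Me_2$ via \EQN{basis}) when $\sigma_*$ fixes $\Ml_0$ and $\Ml_1$, or the pencil of $(1,1)$-curves through a $\sigma_*$-stable pair of base points (class $\Ml_0+\Ml_1-\Mp_1-\Mp_2=\Me_0-\Me_3$ via \EQN{basis}) when $\sigma_*$ swaps $\Ml_0$ and $\Ml_1$. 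Such an $f$ is base-point-free, hence nef with $f^2=0$, and its general member --- the strict transform of a general line (respectively conic, respectively $(1,1)$-curve) --- is an irreducible smooth rational curve, so the pencil $|f|$ is a fibration exhibiting $f$ as covering; since $-\Mk\cdot f=2$ is minimal among rational covering families by \LEM{kf}, we get $f\in S(Y,-\Mk)$, and then $f\in\Psi_0$ by \LEM{psi} as before. The hard part, I expect, is making this choice go through for every degenerate configuration of the blown-up points: infinitely near centres and effective $(-2)$-curves can destroy the nefness of the pencil one first writes down, so one must re-select the $\sigma_*$-invariant pencil case by case, and the real work is in checking that the hypothesis $\Mk^2\le 6$ always provides enough base points to make a real selection --- which is precisely what fails for the sphere, where $\Mk^2=8$.
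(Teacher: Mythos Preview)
Your argument for the first assertion is correct and in fact more explicit than the paper's, which invokes only Riemann--Roch and Serre duality for $h^0(f)=2$ without spelling out why $h^1(f)=0$.

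For the second assertion your case split via \LEM{basis} matches the paper's, and case~3 is handled identically. The gap is exactly where you flag it: in cases~1 and~2 you write down a $\sigma_*$-invariant class in $\Psi_0$, assert that the associated pencil is base-point-free, and then concede that effective $(-2)$-curves on a genuinely weak del Pezzo can spoil this, proposing an open-ended ``re-select case by case'' as the cure. As written, the proof is therefore incomplete.

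The paper closes this differently and more cheaply. It fixes a single $\sigma_*$-invariant choice $f=2\Me_0-\Me_1-\Me_2-\Me_3-\Me_4$ in type~1 (respectively $f=\Ml_0+\Ml_1-\Mp_1-\Mp_2$ in type~2) and, instead of re-selecting when $|f|$ is reducible, analyses the decomposition $f=\MmfM(f)+\MmfF(f)$ into moving and fixed parts. Up to permutation the moving component must be $\Me_0$ or some $\Me_0-\Me_j$; the first is ruled out because its residual $\Me_0-\Me_1-\Me_2-\Me_3-\Me_4$ has $(-\Mk)$-degree $-1<0$ and so cannot be effective when $-\Mk$ is nef. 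Hence $\MmfM(f)=\Me_0-\Me_j\in\Psi_0$ (respectively $\Ml_i\in\Psi_0$), and it is the class of a rational family since $p_a=0$ and $h^0=2$. The observation that replaces your projected case-by-case re-selection is that $\MmfM(f)$ is automatically real: the fixed part of a real linear system is $\sigma_*$-stable, so its complement in the real class $f$ is too. Two lines of bookkeeping thus dispose of all degenerate configurations at once.
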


\begin{proof}
We call \citep[Algorithm~1]{nls-algo-fam}
with input $-\Mk\cdot f=2$ and $f^2=0$ and obtain $\Psi_0$ as output. 
It follows from  
Riemman-Roch theorem and Serre duality that $h^0(f)=2$ for all $f\in \Psi_0$.
By the arithmetic genus formula one has $p_a(f)=0$.

Now suppose that $Y$ is $\MbbR$-rational
so that either case (1), (2) or (3) of \LEM{basis} holds.
If case (3) holds, then $S(Y,-\Mk)\cap \Psi_0\neq \emptyset$ as asserted.
If case (1) or (2) holds, then $Y$ is the real blowup of 
$\MbbP^2$ or $\MbbP^1\times\MbbP^1$ in at least 3 and 2 points \Mresp. 
Thus $4\leq \Mrnk(N(Y))\leq 9$ and 
there exists $f\in \Psi_0$ such that, up to permutation,
$f\in\{~2\Me_0-\Me_1-\Me_2-\Me_3-\Me_4,~\Ml_0+\Ml_1-\Mp_1-\Mp_2\}$
and $\sigma_*(f)=f$.

If $f\in\Psi_0$ is the class of an irreducible curve \Mst $\sigma_*(f)=f$, 
then the lemma holds, since $-\Mk\cdot f\geq 2$ by \LEM{kf}.

Suppose that $f\in\{~2\Me_0-\Me_1-\Me_2-\Me_3-\Me_4,~\Ml_0+\Ml_1-\Mp_1-\Mp_2\}$
\Mst $\sigma_*(f)=f$, and $f$ decomposes into a moving and a fixed component. 
We consider up to permutation, all possible decompositions of $f$.
If $f=2\Me_0-\Me_1-\Me_2-\Me_3-\Me_4$ and $h^0(\Me_0-\Me_2-\Me_3-\Me_4)=1$, 
then the moving component $\Me_0-\Me_1\in \Psi_0$ is the class of a rational family.
Since $-\Mk$ is nef one has that $h^0(\Me_0-\Me_1-\Me_2-\Me_3-\Me_4)=0$
and thus $\Me_0$ is not the moving component.
If $f=\Ml_0+\Ml_1-\Mp_1-\Mp_2$ and $h^0(\Ml_0-\Mp_1-\Mp_2)=1$, 
then $\Ml_1\in \Psi_0$ is the class of a rational family.
This concludes the proof of this lemma, as we considered all 
possible moving components.
\end{proof}

\begin{lemma}
\textbf{(rational curves with class $-\Mk\in\Psi_2$ where $\Mk^2=2$)}
\label{lem:psi2a}
\\
Suppose that $Y$ is the smooth model of a weak del Pezzo surface \Mst $\Mk^2=2$. 
If $C\subset Y$ is a rational curve with class $-\Mk$,
then 
$C$ is singular and the pullback of a tangent line 
of the quartic branching curve $B\subset \MbbP^2$
along the 2:1 morphism $\Mrow{\varphi_{-\Mk}}{Y}{\MbbP^2}$.
\end{lemma}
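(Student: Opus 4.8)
The plan is to push everything through the anticanonical double cover $\varphi_{-\Mk}\colon Y\to\MbbP^2$. Since $Y$ is the smooth model of a weak del Pezzo surface with $\Mk^2=2$, the class $-\Mk$ is nef, big and base point free, $h^0(-\Mk)=\Mk^2+1=3$ (by Riemann--Roch and Kawamata--Viehweg vanishing), and it is classical (see \citep[Chapter~8]{dol1}) that $\varphi_{-\Mk}$ is a finite degree-$2$ morphism onto $\MbbP^2$ branched along a quartic $B$, contracting precisely the $(-2)$-curves of $Y$ onto the singular points of $B$. The first assertion then follows at once from adjunction: $2p_a(C)-2=C\cdot(C+\Mk)=(-\Mk)\cdot(-\Mk+\Mk)=0$ gives $p_a(C)=1$, whereas a rational curve has geometric genus $0<1$, so $C$ must be singular.

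For the second assertion I would first identify $C$ as the full preimage of a line. Since $\varphi_{-\Mk}$ is the morphism attached to the \emph{complete} linear system $|-\Mk|\cong\MbbP^2$ and is flat (being finite onto a smooth surface), every effective divisor in the class $-\Mk$ equals $\varphi_{-\Mk}^{-1}(\ell)$ for a unique line $\ell\subset\MbbP^2$. On a rational surface numerical and linear equivalence coincide (\SEC{ns}), so the reduced irreducible curve $C$ is a member of $|-\Mk|$, whence $C=\varphi_{-\Mk}^{-1}(\ell)$ for some line $\ell$. Irreducibility of $C$ already excludes $\ell$ being a bitangent, a hyperflex line, or a line through $\mathrm{Sing}(B)$, since in each of those cases $\varphi_{-\Mk}^{-1}(\ell)$ is reducible.

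It remains to prove that $\ell$ is tangent to $B$. Here I would use that $C\cdot E=(-\Mk)\cdot E=0$ for every $(-2)$-curve $E$, so by irreducibility $C$ is disjoint from every curve contracted by $\varphi_{-\Mk}$; hence near $C$ the map $\varphi_{-\Mk}$ is the standard double cover of $\MbbP^2$ branched along $B$, and $\varphi_{-\Mk}|_C\colon C\to\ell\cong\MbbP^1$ is a degree-$2$ cover whose branch divisor is $\ell\cap B$, a divisor of degree $\deg B=4$ supported on the smooth locus of $B$. If $\ell$ were transverse to $B$, this branch divisor would be $4$ distinct reduced points and $C$ would be a smooth double cover of $\MbbP^1$ branched at $4$ points, hence a curve of genus $1$, contradicting that $C$ is rational. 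Therefore $\ell$ is tangent to $B$, and $C=\varphi_{-\Mk}^{-1}(\ell)$ is the pullback of this tangent line along the 2:1 morphism, as claimed; a closer look shows $\ell$ meets $B$ with contact of order $2$ or $3$ at a single smooth point, namely the image of the unique singular point of $C$.

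The step I expect to be the main obstacle is the last one: justifying that $\varphi_{-\Mk}$ behaves like an honest double cover of $\MbbP^2$ in a neighbourhood of $C$ and translating the contact of $\ell$ with $B$ into the singularity type of $C$, which is where the weak (non-minimal) structure of $Y$ --- the $(-2)$-curves and the singular points of $B$ --- must be handled with care. The adjunction computation and the identification of members of $|-\Mk|$ with preimages of lines are routine.
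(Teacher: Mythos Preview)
Your argument is correct and follows the same route as the paper: both use that $\varphi_{-\Mk}$ is the anticanonical double cover branched along a plane quartic, identify members of $|-\Mk|$ with pullbacks of lines, compute $p_a(-\Mk)=1$, and conclude that rationality forces the line to be tangent to the branch curve. The paper's proof is terse and delegates most of this to citations in \citep{dol1}, whereas you spell out the Riemann--Hurwitz step for $C\to\ell$ and the disjointness of $C$ from the $(-2)$-curves explicitly; the content is the same.

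One small wording issue: you call $\varphi_{-\Mk}$ a \emph{finite} degree-$2$ morphism and in the same sentence say it contracts the $(-2)$-curves, which is self-contradictory. What you mean (and what you correctly use later) is that $\varphi_{-\Mk}$ is generically $2:1$ and becomes a genuine finite double cover after passing to the anticanonical model, i.e.\ away from the $(-2)$-curves; since you show $C$ is disjoint from these, your Riemann--Hurwitz argument on $C\to\ell$ goes through unchanged.
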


\begin{proof}
It follows from \citep[Theorem~8.3.2]{dol1} that $\varphi_{-\Mk}$ is a 2:1 covering.
Curves in the linear series $|-\Mk|$ are the pullback of lines in $\MbbP^2$.
Since $p_a(-\Mk)=1$, rational curves in $|-\Mk|$ have a singularity at the 
ramification locus, as discussed in \citep[Section~6.3.3]{dol1}.
It follows that rational curves are the pullback of tangent lines of 
the branching curve $B$.
\end{proof}

\begin{lemma}
\textbf{(rational curves with class $-2\Mk\in\Psi_4$ where $\Mk^2=1$)} 
\label{lem:psi4}
\\
Suppose that $Y$ is the smooth model of a weak del Pezzo surface \Mst $\Mk^2=1$. 
If $C\subset Y$ is a rational curve with class $-2\Mk$,
then 
$C$ is singular and the pullback of a bitangent plane of 
the sextic branching curve $B\subset Q$ --- intersected 
with the quadric cone $Q\subset\MbbP^3$ --- along the 2:1 morphism $\Mrow{\varphi_{-2\Mk}}{Y}{Q}$.
\end{lemma}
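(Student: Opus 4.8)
The plan is to mirror the proof of \LEM{psi2a}, with the morphism $\varphi_{-2\Mk}$ associated to $-2\Mk$ taking over the role that the anticanonical morphism plays for a weak del Pezzo surface of degree two. First I would record the structure of the linear system $|-2\Mk|$. By \citep[Theorem~8.3.2]{dol1}, on a weak del Pezzo surface of degree one the system $|-2\Mk|$ is base-point free and $\Mrow{\varphi_{-2\Mk}}{Y}{Q}$ is a $2\!:\!1$ morphism onto a quadric cone $Q\subset\MbbP^3$, branched over the vertex of $Q$ together with a curve $B\subset Q$ that is cut out on $Q$ by a cubic surface, so that $\deg B=6$. A Riemann--Roch computation, combined with Kawamata--Viehweg vanishing and Serre duality, gives $h^0(-2\Mk)=\chi(\mathcal{O}_Y)+\tfrac{1}{2}\bigl((-2\Mk)^2-(-2\Mk)\cdot\Mk\bigr)=1+3\Mk^2=4=h^0(\mathcal{O}_Q(1))$, the last equality because $Q$ is non-degenerate in $\MbbP^3$; since $-2\Mk=\varphi_{-2\Mk}^{*}\mathcal{O}_Q(1)$, pullback identifies $|\mathcal{O}_Q(1)|$ with $|-2\Mk|$, so every member of $|-2\Mk|$ is of the form $\varphi_{-2\Mk}^{-1}(H\cap Q)$ for a hyperplane $H\subset\MbbP^3$.

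Next I would feed our curve $C$ into this picture. Adjunction gives $p_a(-2\Mk)=1+\tfrac{1}{2}(-2\Mk)\cdot(-\Mk)=1+\Mk^2=2$, so the $\delta$-invariant of $C$ equals $p_a(C)-p_g(C)=2>0$, proving that $C$ is singular. Since numerical and linear equivalence coincide on a rational surface, $C$ is a member of $|-2\Mk|$, hence $C=\varphi_{-2\Mk}^{-1}(D)$ with $D=H\cap Q$ for some plane $H$. As $C$ is irreducible, so is $D$; this forces $H$ to avoid the vertex of $Q$, so that $D$ is a smooth plane conic with $D\cong\MbbP^1$. Then $\Mrow{\varphi_{-2\Mk}|_C}{C}{D}$ is a double cover, given in an affine chart by $y^2=s(t)$ with $s$ a binary form of degree $\deg(D\cap B)=\deg(H\cap B)=6$ whose zero divisor on $D$ is $H\cap B$. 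Writing $s=u\cdot v^2$ with $u$ squarefree, the normalization of $C$ is the double cover $z^2=u(t)$ of $\MbbP^1$, of genus $\tfrac{1}{2}\deg u-1$; rationality of $C$ forces $\deg u=2$, and irreducibility of $C$ forces $\deg u\neq 0$. Equivalently, $H\cap B$ has exactly two points of odd multiplicity, so in the generic case $H\cap B=2P_1+2P_2+P_3+P_4$ and $H$ is a bitangent plane of $B$; hence $C=\varphi_{-2\Mk}^{-1}(H\cap Q)$ is the claimed pullback.

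The step I expect to cause the most friction --- just as in \LEM{psi2a}, but aggravated by the extra dimension and by the cone point of $Q$ --- is the bookkeeping of degenerate configurations: excluding planes through the vertex of $Q$ (for which $D$, and therefore $C$, would split off components rather than be an integral rational curve), and deciding what ``bitangent plane'' should mean when the two tangency points collide into a single point of contact order four, when a tangency point becomes a flex of $B$, or when a tangency point merges with one of the two transverse intersection points. I anticipate that the factorization $s=u\cdot v^2$ handles all of these uniformly --- in particular $\deg u=0$ corresponds precisely to $C=\{y=v\}\cup\{y=-v\}$ being reducible --- so that no essentially new difficulty arises. The genuine content is the identification of $|-2\Mk|$ with the hyperplane sections of $Q$ from \citep[Section~8.3]{dol1} and the numerical fact $p_a(-2\Mk)=2$.
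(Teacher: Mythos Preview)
Your proposal is correct and follows essentially the same approach as the paper: identify $|-2\Mk|$ with hyperplane sections of $Q$ via \citep[Theorem~8.3.2]{dol1}, compute $p_a(-2\Mk)=2$ so that a rational member has total $\delta$-invariant $2$, and conclude that the corresponding plane is bitangent to $B$. The only difference is that the paper outsources the last step to \citep[Section~8.8.1]{dol1}, whereas you carry out the double-cover analysis $y^2=s(t)=u\cdot v^2$ explicitly; your argument is thus more self-contained, and incidentally your $p_a(-2\Mk)=2$ corrects the paper's typo $p_a(-\Mk)=2$.
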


\begin{proof}
We know from \citep[Theorem~8.3.2]{dol1} that $\varphi_{-2\Mk}$ is a 2:1 covering.
Curves in the linear series $|-2\Mk|$ are the pullback of plane sections of $Q$.
Since $p_a(-\Mk)=2$, rational curves in $|-2\Mk|$ are singular at the ramification locus \Mst
that the delta invariants of the singularities adds up to two. 
It follows that $C$ is the pullback of a plane section of $Q$ that is bitangent to 
the branching curve $B$, as discussed in \citep[Section~8.8.1]{dol1}.
\end{proof}

\begin{lemma}
\textbf{(rational curves with class $-\Mk+e\in\Psi_2$ where $\Mk^2=1$)} 
\label{lem:psi2b}
\\
Suppose that $Y$ is the smooth model of a weak del Pezzo surface \Mst $\Mk^2=1$. 
If $C\subset Y$ is a rational curve with class $-\Mk+e$ where $\Mk\cdot e=e^2=-1$,
then $C$ is singular and the pullback of a tangent line 
of the quartic branching curve $B\subset \MbbP^2$
along the 2:1 morphism $\Mrow{\varphi_{-\Mk+e}}{Y}{\MbbP^2}$.
\end{lemma}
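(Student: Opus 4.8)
The plan is to reduce the statement to \LEM{psi2a} by contracting an exceptional curve. Put $D:=-\Mk+e$. From $\Mk^2=1$ and $\Mk\cdot e=e^2=-1$ one computes $D^2=\Mk^2-2\,\Mk\cdot e+e^2=2$ and $\Mk\cdot D=-\Mk^2+\Mk\cdot e=-2$, so the adjunction formula gives $p_a(D)=1$. Since $C$ is an irreducible rational curve with $[C]=D$, the number $p_a(C)-p_g(C)=1$ is the sum of the delta invariants of its singular points; hence $C$ has exactly one singular point (a node or a cusp), which already settles the first assertion.

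Next I would show that $e$ is the class of an irreducible exceptional curve $E\subset Y$. The class $e$ is effective by Riemann--Roch, using that $h^0(\Mk-e)=0$ because $(\Mk-e)\cdot(-\Mk)=-2<0$ and $-\Mk$ is nef. By the structure of $(-1)$-classes on weak del Pezzo surfaces \citep[Lemma~8.2.22]{dol1} (compare the proof of \LEM{basis}), an effective representative of $e$ has the form $E+R$, where $E$ is an exceptional curve and $R\geq 0$ is supported on $(-2)$-curves; write $r:=[R]$, so that $\Mk\cdot r=0$. Expanding $e^2=-1$ gives $r^2=-2\,(E\cdot R)$. Suppose $R\neq 0$. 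Then $r^2<0$ since the $(-2)$-curves span a negative definite lattice, hence $E\cdot R\geq 1$ and $D\cdot r=(-\Mk+E+R)\cdot r=(E\cdot R)+r^2=-(E\cdot R)<0$. Thus some component $\Gamma$ of $R$ satisfies $D\cdot\Gamma<0$, which forces the irreducible curve $C$ with $[C]=D$ to coincide with $\Gamma$, contradicting $D^2=2\neq -2=\Gamma^2$. Therefore $R=0$ and $e=[E]$.

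Now contract $E$ by the birational morphism $\Mrow{\pi}{Y}{Y'}$. From $\Mk=\pi^*\Mk_{Y'}+e$ we obtain $\pi^*(-\Mk_{Y'})=D$; since $-\Mk$ is nef and $E\cdot\Gamma\geq 0$ for every irreducible curve $\Gamma\neq E$, the class $-\Mk_{Y'}$ is nef, and $(-\Mk_{Y'})^2=D^2=2$, so $Y'$ is the smooth model of a weak del Pezzo surface with $\Mk_{Y'}^2=2$ \citep[Proposition~8.1.23]{dol1}. Moreover $h^0(-\Mk_{Y'})=h^0(D)=3$, and by \citep[Theorem~8.3.2]{dol1} the system $|-\Mk_{Y'}|$ is base point free and $\varphi_{-\Mk_{Y'}}$ is a 2:1 cover of $\MbbP^2$ with quartic branching curve $B$; consequently $\varphi_D=\varphi_{-\Mk_{Y'}}\circ\pi$ is a 2:1 morphism onto $\MbbP^2$. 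Since $D\cdot e=(-\Mk+e)\cdot e=0$, the curve $C$ is disjoint from $E$, so $\pi$ restricts to an isomorphism from $C$ onto an irreducible rational curve $C'\subset Y'$ with $[C']=-\Mk_{Y'}$. By \LEM{psi2a} applied to $C'\subset Y'$, the curve $C'$ equals $\varphi_{-\Mk_{Y'}}^*(L)$ for a line $L$ tangent to $B$. Since $C'$ does not pass through $\pi(E)$, we get $\varphi_D^*(L)=\pi^*\bigl(\varphi_{-\Mk_{Y'}}^*(L)\bigr)=\pi^*(C')=C$, so $C$ is the pullback of the tangent line $L$ of $B$ along $\varphi_D$, as claimed.

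I expect the second paragraph to be the main obstacle: on a weak del Pezzo surface a $(-1)$-class need not be represented by a genuine exceptional curve, and excluding the ``mixed'' case $R\neq 0$ is exactly where the hypothesis that a rational curve in the class $D$ actually exists is used, through the negativity $D\cdot r<0$. The remainder is bookkeeping with the contraction $\pi$ together with an appeal to the already-established degree-two case.
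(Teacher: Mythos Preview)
Your proof is correct and follows the same strategy as the paper: show that $e$ is the class of a genuine exceptional curve $E$, contract $E$ to land on a weak del Pezzo with $\Mk'^2=2$, and invoke \LEM{psi2a}. Your handling of the key step---ruling out a nonzero $(-2)$-chain $R$ in the decomposition $e=[E]+r$ via the computation $D\cdot r=-(E\cdot R)<0$---is more explicit than the paper's terse ``no fixed components'' phrasing, but the underlying argument is identical.
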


\begin{proof}
Since $C$ is a rational curve, its class $-\Mk+e$ has no fixed components. 
Therefore there exists no $c\in N(Y)$ \Mst $h^0(c)>0$, $c^2=-2$, $\Mk\cdot c=0$  
and $(-\Mk+e)\cdot c=e\cdot c<0$.
It now follows from \citep[Lemma~8.2.22]{dol1} that $e$ is the class of an exceptional curve $E$. 
Let $\Mrow{\pi}{Y}{Y'}$ be the contraction of $E$ to a smooth point in $Y'$.
It follows from \citep[Proposition~8.1.23]{dol1} that $Y'$ is a 
weak del Pezzo surface \Mst $\Mk'^2=2$ and $\Mk=\pi^*\Mk'+e$.
Thus $\pi(C)$ is a rational curve with class $-\Mk'$
and the pullback of a
hyperplane section along $\varphi_{-\Mk'}\circ \pi$ has class $-\Mk+e$.
This lemma is now a consequence of \LEM{psi2a}.
\end{proof}

\begin{proposition}
\textbf{(minimal family classes for minimal ruled pairs)}
\label{prp:mp} 
\\
Let $\Psi_0$, $\Psi_1$, $\Psi_2$ and $\Psi_4$ be as in \DEF{psi}.
If $(Y,\Mh)$ is a minimal ruled pair 
\Mst either $Y$ is $\MbbR$-rational or there exists $f\in S(Y,\Mh)$ \Mst $\Mk\cdot f=-2$,
then one of the following holds:
\begin{enumerate}[topsep=0pt]

\item 
$\Mh=-\Mk$, $1\leq \Mk^2\leq 2$, $S(Y,\Mh)\subset \Psi_0 \cup\Psi_2\cup \Psi_4$.

\item 
$\Mh=-\Mk$, $3\leq \Mk^2\leq 6$, $S(Y,\Mh)\subset \Psi_0$.

\item 
$\Mh=-\Mk$, $\Mk^2=7$, $S(Y,\Mh)\subset\Psi_0\cup\Psi_1$.

\item 
$\Mh=-\frac{\alpha}{2}\Mk$ for $1\leq \alpha\leq  2$, 
$\Mk^2=8$, 
$S(Y,\Mh)\subset\Psi_0\cup\{~\Ml_0,~\Ml_1,~ \Ml_0+\Ml_1 ~\}$.

\item 
$\Mh=-\frac{\alpha}{3}\Mk$ for $1\leq \alpha\leq  3$, 
$\Mk^2=9$, 
$S(Y,\Mh)=\Psi_1$.

\item 
$\Mh^2=0$, $\Mh\neq 0$,
$S(Y,\Mh)=\left\{~ -\frac{2}{\Mh\cdot\Mk}\Mh ~\right\}$.

\item 
$(2\Mh+\Mk)^2=0$, $2\Mh+\Mk\neq 0$,
$S(Y,\Mh)=\left\{~ -\frac{2}{(2\Mh+\Mk)\cdot\Mk}(2\Mh+\Mk) ~\right\}$.

\end{enumerate}
The following table summarizes attributes of a minimal families:
\begin{center}
\begin{tabular}{|c||c|c|c|c|c|}
\hline
class is in $\rightarrow$  & $\Psi_0$ & $\Psi_1$ & $\Psi_2$ & $\Psi_4$ & $\{\Ml_0+\Ml_1\}$ \\
\hline             
complete     & yes      & yes      & no       & no       & yes               \\
canonical degree & $-2$     & $-3$     & $-2$     & $-2$     & $-4$              \\
dimension    & $1$      & $2$      & $1$      & $1$      & $3$               \\
\hline
\end{tabular}
\end{center}
If $f$ is the class of an incomplete minimal family, then the curves in
the family are singular and the pullback 
of hyperplane sections, along the associated map $\varphi_f$,  
that are either tangent or bitangent to the branching locus.
\end{proposition}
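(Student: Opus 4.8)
The plan is to combine the structural dichotomy of \PRP{chain}.\Mrefmclaim{c} with the lemmas \LEM{kf}, \LEM{self}, \LEM{psi}, \LEM{psi0}, \LEM{psi2a}, \LEM{psi2b}, \LEM{psi4} and the reality constraints of \LEM{basis}, and then to read off the seven cases together with the table. First I would treat the $\MbbP^1$-bundle alternative of \PRP{chain}.\Mrefmclaim{c}. There the fiber class $f$ — equal to $-\frac{2}{\Mh\cdot\Mk}\Mh$ or $-\frac{2}{(2\Mh+\Mk)\cdot\Mk}(2\Mh+\Mk)$ — satisfies $f^2=0$, $\Mk\cdot f=-2$ by the arithmetic genus formula, and is nef, so $|f|$ is a base-point-free pencil and hence a complete rational covering family of dimension $1$ and canonical degree $-2$. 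To see $S(Y,\Mh)=\{f\}$, I would take any rational covering family with irreducible members and class $g$, push a general member to the base of the fibration, and observe that either the member lies in a fiber — forcing $g=f$ by irreducibility and genus $0$ — or $g\cdot f\ge1$ and a short argument with the ruling gives $\Mh\cdot g\ge\Mh\cdot f$. This yields cases 6 and 7.

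In the remaining case $Y$ is a weak del Pezzo surface and $\Mh=-\alpha\Mk$, so minimizing the $\Mh$-degree among rational covering families is the same as minimizing the anticanonical degree $-\Mk\cdot[F]$, which by \LEM{kf} is at least $2$. I would now split on $\Mk^2\in\{1,\dots,9\}$. If a rational covering family of anticanonical degree exactly $2$ exists, its class lies in $\Psi_0\cup\Psi_2\cup\Psi_4$ by \LEM{self} and \LEM{psi}; the rank constraints built into \DEF{psi} (and the basis change \EQN{basis}) then force: only $\Psi_0$ when $3\le\Mk^2\le6$; $\Psi_0$ together with $\Ml_0,\Ml_1,\Ml_0+\Ml_1$ for the minimal degree-$8$ surface $\MbbP^1\times\MbbP^1$; only $\Psi_1=\{\Me_0\}$ for $\MbbP^2$ when $\Mk^2=9$; and, when $\Mk^2\in\{1,2\}$, the additional possibilities in $\Psi_2\cup\Psi_4$ realized by $-\Mk$, $-\Mk+e$ and $-2\Mk$. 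That such a degree-$2$ family actually exists and is real when $Y$ is $\MbbR$-rational and $\Mk^2\le6$ is exactly \LEM{psi0}; for $\Mk^2=7,8,9$ the possible failure of reality of a $\Psi_0$- or $\Ml_i$-class is precisely what pushes $S(Y,\Mh)$ up to $\Psi_1$ or $\{\Ml_0,\Ml_1,\Ml_0+\Ml_1\}$, giving cases 1--5. The hypothesis ``either $\MbbR$-rational or there is $f\in S(Y,\Mh)$ with $\Mk\cdot f=-2$'' is what lets me invoke \LEM{basis} (its cases 1--3) to control the real structure, and \LEM{coord} supplies the coordinates needed to check reality of the candidate classes.

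Finally I would assemble the table. For $f\in\Psi_0\cup\Psi_1\cup\{\Ml_0+\Ml_1\}$ the general member is smooth rational, so $p_a(f)=0$, hence $\Mk\cdot f=-2-f^2\in\{-2,-3,-4\}$, and Riemann--Roch with Kawamata--Viehweg vanishing gives $h^0(f)=f^2+2\in\{2,3,4\}$; since a covering family has dimension between $1$ and $\dim|f|=h^0(f)-1$, the minimal family must be the complete linear system, of dimension $f^2+1$. For $f\in\Psi_2\cup\Psi_4$ one has $p_a(f)=\tfrac12 f^2\in\{1,2\}$, so every rational member is singular, and \LEM{psi2a}, \LEM{psi2b} and \LEM{psi4} identify the members as the pullbacks, along the degree-$2$ map $\varphi_f$, of the lines tangent to the quartic branch curve (classes in $\Psi_2$) or of the plane sections bitangent to the sextic branch curve on the quadric cone (class in $\Psi_4$); as these tangent/bitangent hyperplane sections sweep out only a one-dimensional, proper subvariety of $|f|$, the family is incomplete of dimension $1$. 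This also proves the last sentence of the proposition.

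The main obstacle is the bookkeeping in the weak del Pezzo branch: one must be certain that the finite lists $\Psi_0,\Psi_2,\Psi_4$ coming from \citep[Algorithm~1]{nls-algo-fam} genuinely exhaust the anticanonical-degree-$2$ classes of rational curves on every \emph{weak} — not merely smooth — del Pezzo surface, and that combining \LEM{basis} with the reality condition $\sigma_*[F]=[F]$ cuts these lists down to exactly the sets displayed in the seven cases, with no minimal family of strictly smaller degree overlooked — in particular in degrees $7$, $8$ and $9$, where the minimal degree itself depends on the action of the real structure.
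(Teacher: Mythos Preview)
Your proposal is correct and follows essentially the same route as the paper: both proofs invoke \PRP{chain}.\Mrefmclaim{c} to split into the $\MbbP^1$-bundle case (cases 6--7) and the weak del Pezzo case, then appeal to \LEM{kf}, \LEM{self}, \LEM{psi}, \LEM{psi0}, \LEM{psi2a}, \LEM{psi2b}, \LEM{psi4} for the numerical constraints and the table, and to \LEM{basis} for the real structure in degrees $7$, $8$, $9$. Your write-up is in fact more explicit than the paper's, which leaves the degree $7$--$9$ case analysis and the uniqueness in the $\MbbP^1$-bundle case to the reader.
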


\begin{proof}
It follows from \PRP{chain}.\Mrefmclaim{c} that (1-7) treat all possible cases.
The characterization of minimal families in the table and the assertion at the end,
follow from \LEM{psi}, \LEM{psi0}, \LEM{psi2a}, \LEM{psi4} and \LEM{psi2b}.

If there exists $f\in S(Y,\Mh)$ \Mst $\Mk\cdot f=-2$, then 
cases (1), (2), (3) and (4) are a direct consequence of \LEM{psi}.
In cases (6) and (7) the surface $Y$ is a projective line bundle (possibly $\MbbC$-irrational)
such that the fibers define the unique minimal family with canonical degree $-2$.

In the remainder of the proof we assume that $Y$ is $\MbbR$-rational.
Cases (1) and (2) follow from \LEM{psi0}.
Cases (6) and (7) are straightforward.
In cases (3), (4) and (5) the surface $Y$ is by \LEM{basis}
either $\MbbP^2$, the blowup of $\MbbP^2$ in a real point, 
the blowup of $\MbbP^2$ in two real points,
the blowup of $\MbbP^2$ in two complex conjugate points,
$\MbbP^1\times\MbbP^1$ or 
the blowup of $\MbbP^1\times\MbbP^1$ in a real point. 
Notice that the real structure $\sigma$ may flip the components of 
the fiber product $\MbbP^1\times\MbbP^1$.
The details for these cases are straightforward and left to the reader.
\end{proof}


\subsection{}

\begin{lemma}
\textbf{(pullback of classes of families)} 
\label{lem:pull}
\\
Let $\Mrow{\mu}{(Y,\Mh)}{(Y',\Mh')}$ be an adjoint relation.
If $f'\in N(Y')$ and $f:=\mu^*f'$,
then $\Mh\cdot f=\Mh'\cdot f' - \Mk'\cdot f'$.
Moreover, if $f'\in N(Y')$ is the class of a rational family \Mst $\Mk'\cdot f'=-2$, 
then $f$ is the class of a rational family as well.
\end{lemma}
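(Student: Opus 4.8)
The plan is to prove the two claims separately: the degree identity by a projection-formula computation, and the transfer of rationality by pulling the family $F'$ back along $\mu$. For the degree identity, recall that $\mu\colon Y\to Y'$ is a birational morphism of smooth surfaces, so $\mu_*\mu^*=\mathrm{id}$ on $N(Y')$ and $\Mk=\mu^*\Mk'+\mathcal{E}$ with $\mathcal{E}\geq 0$ a $\mu$-exceptional divisor, whence $\mu_*\Mk=\Mk'$. The definition of an adjoint relation gives $\Mh'=\mu_*(\Mh+\Mk)=\mu_*\Mh+\Mk'$, so $\mu_*\Mh=\Mh'-\Mk'$, and the projection formula yields
\[
\Mh\cdot f=\Mh\cdot\mu^*f'=(\mu_*\Mh)\cdot f'=(\Mh'-\Mk')\cdot f'=\Mh'\cdot f'-\Mk'\cdot f' .
\]
(Equivalently one may use $\mu^*\Mh'=\Mh+\Mk$, which holds because $\mu$ contracts exactly the exceptional curves orthogonal to $\Mh+\Mk$, together with $\mathcal{E}\cdot\mu^*f'=0$.)

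For the transfer of rationality, pick a rational family $F'\subset Y'\times B$ with $[F']=f'$; shrinking $B$ we may assume $B$ is irreducible. Put $\Phi:=\mu\times\mathrm{id}_B\colon Y\times B\to Y'\times B$, a surjective birational morphism of smooth varieties, and let $F:=\Phi^*F'$, an effective divisor. From $\pi_2=\pi_2'\circ\Phi$ and the surjectivity of $\Phi$ it follows that $\pi_2\colon F\to B$ is dominant, so $F$ is a family of $Y$. For general $b\in B$ we have $Y'\times\{b\}\not\subseteq\mathrm{supp}\,F'$ and $\Phi$ restricts to $\mu$ on $Y\times\{b\}\cong Y$, so the member $F_b$ is the divisor $\mu^*C'$ with $C':=F'_b$; hence $[F]=\mu^*f'=f$. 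Writing $\mu^*C'=\widetilde{C'}+\sum_i m_iE_i$, where $\widetilde{C'}$ is the strict transform of $C'$, the $E_i$ are the $\mu$-exceptional prime divisors and $m_i\geq 0$, every component is rational: $\widetilde{C'}$ is birational to the rational curve $C'$, and each $E_i$ is a smooth rational curve. Thus the general member of $F$ is a connected union of rational curves, so $F$ is a rational family and $f$ is the class of a rational family.

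The step that needs care is the last one. One must check that restricting $\Phi^*F'$ to a general fibre genuinely has class $\mu^*f'$ rather than the smaller class of the strict transform alone, and, if one insists on \emph{integral} members of the new family, that a general $C'$ meets none of the finitely many points blown up by $\mu$, so that $\mu^*C'=\widetilde{C'}$ is itself an integral rational curve of class $f$. This refinement is where the hypothesis $\Mk'\cdot f'=-2$ enters: it forces $p_a(f')=\tfrac12 f'^2$, so either $f'^2=0$ and $F'$ is contained in a base-point-free pencil, or $F'$ is an incomplete family of singular curves whose class is nonetheless cut out by a base-point-free system in the weak del Pezzo settings where this lemma is applied (cf. \LEM{psi0}, \LEM{psi2a} and \LEM{psi4}); in all cases a general member avoids the centers of $\mu$. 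Finally, the conclusion is consistent with \LEM{kf}, since $\Mk\cdot f=\Mk'\cdot f'=-2$.
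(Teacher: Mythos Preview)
Your computation of the degree identity is correct and essentially the same as the paper's: you push $\Mh$ forward and use $\mu_*\Mh=\Mh'-\Mk'$, whereas the paper pulls $\Mh'$ back and uses $\mu^*\Mh'=\Mh+\Mk$; these are equivalent via the projection formula.

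The second claim, however, has a real gap. Before your final paragraph you conclude that a connected union of rational curves already makes $F$ a ``rational family'', but the paper's definition in \SEC{fam} requires the general member to have geometric genus~$0$, which presupposes an integral curve; and the subsequent applications of \LEM{kf} to $f$ certainly need this. You then try to show that every $m_i=0$, but your justification is a case split invoking \LEM{psi0}, \LEM{psi2a} and \LEM{psi4}. Those lemmas treat only the special endpoint situation where $Y'$ is the smooth model of a weak del Pezzo surface and $f'\in\Psi_0\cup\Psi_2\cup\Psi_4$. The present lemma is stated for, and applied (through \PRP{pp}) at, \emph{every} step of the adjoint chain, where $Y'$ need not be weak del Pezzo and $f'$ is an arbitrary class with $\Mk'\cdot f'=-2$; your case analysis simply does not cover this. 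Even the $f'^2=0$ branch is incomplete as written: you still have to argue that the family has no base points before concluding that a general member misses the finitely many centres of~$\mu$.

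The paper's argument is short and uniform. It applies \LEM{kf} not to $f$ but to the \emph{strict-transform} family (whose general member is birational to $C'$, hence rational), obtaining $\Mk\cdot\tilde f\le -2$. Since each contracted exceptional curve satisfies $\Mk\cdot[E_i]=-1$,
\[
-2=\Mk'\cdot f'=\Mk\cdot\mu^*f'=\Mk\cdot\tilde f-\sum_i m_i,
\]
which together with $m_i\ge 0$ forces all $m_i=0$, so $f=\tilde f$ is the class of a rational family. This is exactly where the hypothesis $\Mk'\cdot f'=-2$ does its work, and it applies to any adjoint relation without case distinctions.
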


\begin{proof}
We have $\Mk\cdot f=\Mk'\cdot f'$, since $\Mk\cdot f = \Mk\cdot \mu^*f'=\mu_*\Mk\cdot f'=\Mk'\cdot f'$
by the projection formula for divisor classes. 
Suppose that $\Mrow{\mu}{Y}{Y'}$ contracts exceptional curves with classes $(\Me_i)_{i\in I}$.
The contracted exceptional curves
are orthogonal to $\Mh+\Mk$ and thus $\mu^*\Mh'=\Mh+\Mk$ so that
\begin{equation*}
\Mh\cdot f = (\Mh+\Mk)\cdot f - \Mk\cdot f = \mu^*\Mh'\cdot f - \Mk\cdot f 
=  \Mh'\cdot f' - \Mk'\cdot f'.
\end{equation*} 
If $\Mtlf$ is the strict transform of $f'$ along $\mu$ and $\Mk'\cdot f'=-2$,
then $f=\Mtlf$, because $\Mk\cdot \Mtlf\leq -2$ by \LEM{kf}, $\Mk\cdot \Me_i=-1$ for $i\in I$ and
\[
  \Mk'\cdot f'
= \mu_*\Mk\cdot f'
= \Mk\cdot \mu^*f'
= \Mk\cdot(\Mtlf+\Msum{i\in I}{}m_i\Me_i)
= \Mk\cdot\Mtlf-\Msum{i\in I}{}m_i=-2.
\]
Therefore $m_i=0$ for all $i\in I$ and thus if $f'$ is a rational family, then $f$ is as well. 
\end{proof}

\begin{lemma}
\textbf{(pushforward of classes of families)}
\label{lem:push}
\\
Let $\Mrow{\mu}{(Y,\Mh)}{(Y',\Mh')}$ be an adjoint relation.
If $f\in N(Y)$ and $f':=\mu_*f$,
then $\Mh'\cdot f'=\Mh\cdot f + \Mk\cdot f$.
Moreover, if $f$ the class of a rational family, then $f'$
is the class of a rational family.
\end{lemma}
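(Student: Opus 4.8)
The plan is to run the argument of the preceding lemma (pullback of classes of families) essentially in reverse, using the fact that $\mu$ is a birational morphism that contracts exceptional curves orthogonal to $\Mh+\Mk$, so that $\mu^*\Mh' = \Mh+\Mk$. First I would establish the numerical identity. Writing $f' := \mu_* f$, the projection formula for divisor classes gives $\Mk' \cdot f' = \Mk' \cdot \mu_* f = \mu^* \Mk' \cdot f$; but $\mu^*\Mk' = \Mk - \sum_{i\in I} \Me_i$, where $(\Me_i)_{i\in I}$ are the classes of the contracted exceptional curves, so this requires knowing the intersection numbers $\Me_i \cdot f$. The cleaner route is to use $\mu^*\Mh' = \Mh+\Mk$ directly: by the projection formula,
\[
\Mh'\cdot f' = \Mh'\cdot \mu_* f = \mu^*\Mh'\cdot f = (\Mh+\Mk)\cdot f = \Mh\cdot f + \Mk\cdot f,
\]
which is exactly the asserted identity. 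This step is purely formal and short.

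Next I would prove that rationality of the family is preserved under pushforward. Let $F\subset Y\times B$ be a rational covering family with $[F]=f$, and let $F'\subset Y'\times B$ be the family whose member over $b\in B$ is $\mu(F_b)$ (i.e. the image family). Since $\mu$ is a birational morphism, it is in particular a surjective morphism, and $\mu\colon F_b \to \mu(F_b)$ is a surjective morphism from an irreducible rational curve (or its normalization) onto $\mu(F_b)$; hence $\mu(F_b)$ is a rational curve (a dominant image of a rational curve is rational — unirational curves are rational). Moreover $F'$ is still a covering family: $\mu$ is surjective, and the general point of $Y'$ has a preimage lying on a general member $F_b$, whose image $\mu(F_b)=F'_b$ then passes through it. Finally $[F'] = \mu_*[F] = \mu_* f = f'$ because pushforward of cycles commutes with the specialization defining the class of a family, so $f'$ is indeed the class of a rational covering family.

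The one subtlety — and the step I expect to require the most care — is to confirm that $\mu_* f$ really is the class $[F']$ of this image family and not $0$ or some proper multiple: one must check that $\mu$ does not contract the general member $F_b$ to a point (which it cannot, since $F$ is covering, so $F_b$ is not contained in the finite union of contracted exceptional curves) and that $\mu$ restricted to $F_b$ is birational onto its image (generically true, since a general member of a covering family on a surface is not contained in, and meets each contracted $(-1)$-curve in at most finitely many points, so degree considerations via $\mu_*[F_b]\cdot\Mh' = (\Mh+\Mk)\cdot f$ and $\Mh'$ ample would pin the multiplicity to $1$ once we know $(\Mh+\Mk)\cdot f>0$). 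In fact, since $\Mh'$ is nef and big and $f'\neq 0$ would follow from $\Mh'\cdot f' = \Mh\cdot f+\Mk\cdot f$ together with $\Mk\cdot f\le -2$ from \LEM{kf} only if $\Mh\cdot f$ is large enough, the safest formulation is simply: $F'_b := \overline{\mu(F_b\setminus \mathrm{Exc}(\mu))}$ is a rational curve, these fit into a family $F'$ over $B$, and by construction $[F']=\mu_* f=f'$; the numerical identity then drops out of the projection formula as above. The remaining details are routine and can be left to the reader.
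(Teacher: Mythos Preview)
Your proposal is correct and follows essentially the same route as the paper: the numerical identity is proved exactly as you do, via $\mu^*\Mh'=\Mh+\Mk$ and the projection formula, and the preservation of rationality is in the paper a one-line appeal to the birational invariance of the geometric genus. Your extended discussion of why $\mu_*[F_b]=[F'_b]$ with multiplicity one and why the general member is not contracted is more careful than the paper (which leaves all of this implicit), but it is not a different argument---just a more explicit version of the same one.
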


\begin{proof}
Since the classes of the exceptional curves that are contracted by 
$\mu$ are orthogonal to $\Mh+\Mk$, we have that $\mu^*\Mh'=\Mh+\Mk$.
It follows from the projection formula for classes that 
$
\Mh'\cdot f'
=
\mu^*\Mh'\cdot f
=
\Mh\cdot f + \Mk\cdot f.
$
This lemma is now a direct consequence of
the birational invariance of the geometric genus.
\end{proof}

\begin{proposition}
\textbf{(minimal families along adjoint relation)}
\label{prp:pp} 
\\
Let $\Mrow{\mu}{(Y,\Mh)}{(Y',\Mh')}$ be an adjoint relation.
\begin{itemize}[topsep=0pt]

\Mmclaim{a}
If $g'\in N(Y')$ is the class of a minimal family \Mst $\Mk'\cdot g'=-2$, then 
its pullback $g:=\mu^*g'$ is the class of a minimal family so that
$\Mk\cdot g=-2$ and $\Mh\cdot g=\Mh'\cdot g' +2$. 

\Mmclaim{b}
If $f\in N(Y)$ and $g'\in N(Y')$ are classes of minimal families \Mst $\Mk'\cdot g'=-2$, 
then the pushforward $f':=\mu_* f$ is the class of a minimal family so that $\Mk\cdot f=\Mk'\cdot f'=-2$.
\end{itemize}
\end{proposition}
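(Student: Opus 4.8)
The strategy is to combine the two lemmas \LEM{pull} and \LEM{push} on pullback and pushforward of family classes with the degree bookkeeping they provide, and then argue that these operations preserve \emph{minimality} — not just rationality — by comparing the minimal degree $\theta$ on $Y$ with that on $Y'$. The key observation is that \LEM{pull} gives, for any $f'$ with $\Mk'\cdot f'=-2$, the identity $\Mh\cdot\mu^*f'=\Mh'\cdot f'+2$, while \LEM{push} gives $\Mh'\cdot\mu_*f=\Mh\cdot f+\Mk\cdot f$. So the whole proof is really about showing that the ``$+2$'' (resp. ``$+\Mk\cdot f$'') shift matches up the two minimal degrees, using that every covering family on $Y$ pushes forward to a covering family on $Y'$ and conversely every covering family on $Y'$ pulls back to a covering family on $Y$.

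For part \Mrefmclaim{a}: let $g'\in S(Y',\Mh')$ with $\Mk'\cdot g'=-2$ and set $g:=\mu^*g'$. By \LEM{pull}, $g$ is the class of a rational family, $\Mk\cdot g=\Mk'\cdot g'=-2$, and $\Mh\cdot g=\Mh'\cdot g'+2$. Since $\mu$ is dominant and $g'$ is a covering class, $g$ is covering as well (the pullback of a covering family covers $Y$). It remains to see $g$ has minimal degree among covering families of $Y$. Take any covering rational family on $Y$ with class $f$; by \LEM{push}, $f':=\mu_*f$ is the class of a rational covering family of $Y'$, and $\Mh'\cdot f'=\Mh\cdot f+\Mk\cdot f$. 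By \LEM{kf} we have $\Mk\cdot f\leq -2$, hence $\Mh\cdot f\geq \Mh'\cdot f'+2\geq \Mh'\cdot g'+2=\Mh\cdot g$, using minimality of $g'$ on $Y'$. Thus $\Mh\cdot g$ is the minimal covering degree and $g\in S(Y,\Mh)$.

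For part \Mrefmclaim{b}: given $f\in S(Y,\Mh)$ and the auxiliary $g'\in S(Y',\Mh')$ with $\Mk'\cdot g'=-2$, set $g:=\mu^*g'$, which by part \Mrefmclaim{a} lies in $S(Y,\Mh)$ with $\Mk\cdot g=-2$ and $\Mh\cdot g=\theta_Y$ the minimal covering degree on $Y$. Since $f$ is also in $S(Y,\Mh)$, $\Mh\cdot f=\Mh\cdot g=\theta_Y$; and because $f$ is a rational covering family, \LEM{kf} gives $\Mk\cdot f\leq -2$. Now $f':=\mu_*f$ is a rational covering class on $Y'$ by \LEM{push}, with $\Mh'\cdot f'=\Mh\cdot f+\Mk\cdot f=\theta_Y+\Mk\cdot f$. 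Comparing with $\Mh'\cdot g'=\Mh\cdot g-2=\theta_Y-2$ and the minimality of $g'$ on $Y'$, we get $\theta_Y+\Mk\cdot f=\Mh'\cdot f'\geq \Mh'\cdot g'=\theta_Y-2$, forcing $\Mk\cdot f\geq -2$, hence $\Mk\cdot f=-2$ and $\Mh'\cdot f'=\theta_Y-2=\Mh'\cdot g'$. Therefore $f'\in S(Y',\Mh')$ and $\Mk\cdot f=\Mk'\cdot f'=-2$ (the last equality by the projection formula, as in \LEM{push}).

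The main obstacle I anticipate is the bookkeeping around what ``$S(Y',\Mh')$ contains a class of canonical degree $-2$'' buys us, and making sure the pull/push operations genuinely land in the set of \emph{covering} families (not merely families with the right numerical class). The dominance of $\mu$ handles the pushforward direction cleanly; for the pullback direction one should note that if $F'\subset Y'\times B$ covers $Y'$ then its strict transform (equivalently, the closure of $\mu^{-1}$ applied fiberwise) covers $Y$ because $\mu$ is birational and the locus it modifies is a proper closed subset — this is the one point where a sentence of care is warranted rather than a pure lattice computation. Everything else is the intersection-number arithmetic above together with \LEM{kf}, \LEM{pull} and \LEM{push}.
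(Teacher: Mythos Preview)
Your proposal is correct and follows essentially the same route as the paper: both proofs combine \LEM{pull}, \LEM{push} and \LEM{kf} to compare degrees, showing in \Mrefmclaim{a} that any rational covering class $f$ on $Y$ satisfies $\Mh\cdot f\geq \Mh'\cdot\mu_*f+2\geq \Mh'\cdot g'+2=\Mh\cdot g$, and in \Mrefmclaim{b} that equality of minimal degrees forces $\Mk\cdot f=-2$ and $\Mh'\cdot f'=\Mh'\cdot g'$. Your extra remarks on why pullback/pushforward preserve the \emph{covering} property are a welcome clarification the paper leaves implicit; the one small inaccuracy is the justification of $\Mk'\cdot f'=-2$ ``by the projection formula, as in \LEM{push}'' --- the projection formula alone gives $\Mk'\cdot f'=\Mk\cdot f-\sum_i e_i\cdot f\leq -2$, and the reverse inequality comes from applying \LEM{kf} to $f'$ on $Y'$\ldots which is again $\leq -2$, so one actually needs the minimality of $f'$ together with \LEM{pull} (pulling $f'$ back would beat $\theta_Y$ if $\Mk'\cdot f'<-2$). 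The paper's own proof is equally terse on this point.
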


\begin{proof}
\Mrefmclaim{a}
It follows from \LEM{pull} that $g$ is a rational family \Mst $\Mk\cdot g=-2$
and $\Mh\cdot g=\Mh'\cdot g' + 2$. 
Suppose that $f\in N(Y)$ is the class of a minimal family.
By \LEM{push} we have $\Mh\cdot f=\Mh'\cdot f' - \Mk\cdot f$. 
Since $-\Mk\cdot f\geq2$ by \LEM{kf}, we find that
\[
\Mh\cdot f=\Mh'\cdot f' - \Mk\cdot f\geq \Mh'\cdot f' + 2\geq \Mh'\cdot g' + 2=\Mh\cdot g,
\]
and thus $g$ is the class of a minimal family.

\Mrefmclaim{b}
We know from \LEM{push} that $f'$ is the class of a rational family \Mst
$\Mh\cdot f=\Mh'\cdot f'- \Mk\cdot f$.
It follows from \Mrefmclaim{a} that $\Mh\cdot f=\Mh\cdot g$ where $g=\mu^* g'$.
As a consequence of \LEM{pull} we have $\Mh\cdot g=\Mh'\cdot g' +2$
and thus
\[
\Mh\cdot f= \Mh'\cdot f' - \Mk\cdot f=\Mh'\cdot g' +2=\Mh\cdot g.
\]
Since $g'$ is minimal and $\Mk\cdot f\leq -2$ by \LEM{kf}, it follows that
$f'$ is minimal and $\Mk\cdot f=-2$.
\end{proof}

\begin{lemma}
\textbf{(pullback of lines)}
\label{lem:lines}
\\
Let $\Mrow{\mu}{(Y,\Mh)}{(Y',\Mh')}$ be an adjoint relation of type 1 \Mst $Y'\cong\MbbP^2$.
If $\sigma_*(\Me_j)=\Me_j$ for some $j>0$,
then
$S(Y,\Mh)=\Mset{ \Me_0-\Me_j}{ \sigma_*(\Me_j)=\Me_j,~ j>0}$.

\end{lemma}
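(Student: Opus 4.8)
We have an adjoint relation $\mu\colon (Y,\Mh)\to(Y',\Mh')$ of type 1 with $Y'\cong\MbbP^2$. Since $Y'\cong\MbbP^2$, we have $\Mh'=-\Mk'$ (the anticanonical class of the plane, class of cubics), and $\Mk^2$ of the plane is $9$. The real structure on $N(Y')\cong\Mmod{\Me_0}$ is necessarily trivial. By \LEM{basis}.1 we may write $N(Y)\cong\Mmod{\Me_0,\Me_1,\ldots,\Me_r}$ with $\sigma_*(\Me_0)=\Me_0$ and $\sigma_*$ permuting $\{\Me_1,\ldots,\Me_r\}$, and the exceptional classes $\Me_1,\ldots,\Me_r$ are exactly those contracted by $\mu$ (since $Y'\cong\MbbP^2$ has Picard rank 1, \emph{all} of $\Me_1,\ldots,\Me_r$ are contracted). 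Also $\Mh=\mu^*\Mh'-\Mk=\mu^*(-\Mk')-\Mk$; since $\mu^*\Mk'=\Mk-\sum_i\Me_i$ we get $\Mh=-2\Mk+\sum_i\Me_i$... more usefully, the contracted curves are orthogonal to $\Mh+\Mk$, so $\Mh+\Mk=\mu^*\Mh'=3\Me_0$, whence $\Mh=3\Me_0-\Mk=3\Me_0+3\Me_0-\Me_1-\ldots-\Me_r=6\Me_0-\Me_1-\ldots-\Me_r$.

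**The plan.** First I would show the inclusion $\supseteq$: each class $\Me_0-\Me_j$ with $\sigma_*(\Me_j)=\Me_j$ lies in $S(Y,\Mh)$. The class $\Me_0-\Me_j$ is $\sigma_*$-invariant by hypothesis, has self-intersection $-2$... no wait, $(\Me_0-\Me_j)^2 = 1 - 1 = 0$, and $\Mk\cdot(\Me_0-\Me_j) = -3 - 1 = -2$ wait $\Mk\cdot\Me_0 = -3$ and $\Mk\cdot\Me_j = +1$ (since $\Mk = -3\Me_0+\Me_1+\ldots+\Me_r$), so $\Mk\cdot(\Me_0-\Me_j) = -3 - 1 = -2$. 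Hmm, actually $\Mk\cdot\Me_j$: $\Mk\cdot\Me_j = (-3\Me_0+\sum\Me_i)\cdot\Me_j = \Me_j^2 = -1$. So $\Mk\cdot(\Me_0-\Me_j) = -3 - (-1) = -2$. Good. It is the pullback under $\mu$ of $\Me_0\in N(\MbbP^2)$ (the class of lines through the point blown up to get $\Me_j$, at the final stage), and $\Me_0\in S(\MbbP^2,-\Mk')=\Psi_1$ is the class of the minimal family of lines with $\Mk'\cdot\Me_0=-3$... but that doesn't match $\Mk\cdot g = -2$ in \PRP{pp}.\Mrefmclaim{a}. Instead, I should argue directly that $\Me_0-\Me_j$ is represented by an irreducible rational curve (the strict transform of a general line through the relevant center) that moves in a covering pencil, hence is a rational covering family; then compute its degree $\Mh\cdot(\Me_0-\Me_j) = 6 - 1 = 5$... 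I need this to be the minimal degree. So I must compute $\theta = \min\{\Mh\cdot[F]\}$ over all rational covering families and show it equals $5$ and is attained only on $\{\Me_0-\Me_j : \sigma_*\Me_j=\Me_j\}$.

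**Identifying the minimum.** For the $\subseteq$ direction, let $f\in S(Y,\Mh)$ with $\Mh\cdot f = \theta$. By \LEM{kf}, $\Mk\cdot f\le-2$. Writing $f = a\Me_0 - \sum b_i\Me_i$ and using that $f$ must be $\sigma_*$-invariant and effective with $h^0(f)\ge 2$, I expect to use the adjoint chain / descent machinery: push $f$ forward along $\mu$ to $f' = \mu_*f \in N(\MbbP^2)$, which is $a\Me_0$ for some $a\ge 1$, a rational covering family of $\MbbP^2$ of degree $a$; the minimal such is $a = 1$, the family of lines. Then \PRP{pp} relates minimal families of $Y$ and $Y'$, but note $\MbbP^2$ is the end of the chain with $\Mk'\cdot(\text{lines}) = -3 \ne -2$, so \PRP{pp} does not directly apply and I must instead argue by hand. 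The key computation: if $f$ is a rational covering family of $Y$ with $\Mk\cdot f = -2$, then $f' = \mu_*f$ has $\Mk'\cdot f' = \Mk\cdot f + (\text{correction}) $; by \LEM{push}, $\Mh'\cdot f' = \Mh\cdot f + \Mk\cdot f = \theta - 2$, and $\Mh' = 3\Me_0$ so $f' = \frac{\theta-2}{3}\Me_0$. For $f'$ to be effective and a curve class we need $\theta \ge 5$. When $\theta = 5$, $f' = \Me_0$ (lines), and $f = \mu^*\Me_0 - (\text{stuff})$; since $\Mk\cdot f = -2 = \Mk'\cdot f' \cdot(\ldots)$ forces the "stuff" to be a single $\Me_j$. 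When $\Mk\cdot f \le -3$, I'd need $\Mh\cdot f$ even larger — so the minimum is genuinely $5$, attained exactly on the claimed classes. The main obstacle I anticipate is handling the case analysis carefully: ruling out that $f$ could have $\Mk\cdot f = -3$ (which would correspond to $f = \mu^*\Me_0 = \Me_0$, the pullback of the whole family of lines, but then $\Me_0$ is $\sigma_*$-fixed with $\Mh\cdot\Me_0 = 6 > 5$, so not minimal), and ensuring that when $\sigma_*(\Me_j)\ne\Me_j$ the invariant class $\Me_0-\Me_j$ is not available, so one is forced to take a $\sigma_*$-orbit sum or a larger class, which has strictly larger $\Mh$-degree — this is where the hypothesis "$\sigma_*(\Me_j)=\Me_j$ for some $j>0$" is used, and where I should be careful that the set on the right-hand side is nonempty precisely under that hypothesis.

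\begin{lemma}
\textbf{(pullback of lines, restated for the proof sketch)}
\end{lemma}
\begin{proof}[Proof sketch]
Since $Y'\cong\MbbP^2$ we have $\Mh'=-\Mk'$, the real structure on $N(Y')\cong\Mmod{\Me_0}$ is trivial, and by \LEM{basis}.1 one has $N(Y)\cong\Mmod{\Me_0,\Me_1,\ldots,\Me_r}$ with $\sigma_*(\Me_0)=\Me_0$, $\sigma_*$ permuting $\{\Me_1,\ldots,\Me_r\}$, $\Mk=-3\Me_0+\Me_1+\ldots+\Me_r$, and all of $\Me_1,\ldots,\Me_r$ contracted by $\mu$. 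Since the contracted curves are orthogonal to $\Mh+\Mk$ and $\Mh+\Mk=\mu^*\Mh'=3\Me_0$, we get $\Mh=6\Me_0-\Me_1-\ldots-\Me_r$.

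\emph{Step 1: the right-hand side is contained in $S(Y,\Mh)$.} Fix $j>0$ with $\sigma_*(\Me_j)=\Me_j$. The class $\Me_0-\Me_j$ is $\sigma_*$-invariant, has $(\Me_0-\Me_j)^2=0$ and $\Mk\cdot(\Me_0-\Me_j)=-2$. The pencil of lines in $\MbbP^2$ through the (final) center corresponding to $\Me_j$ has strict transform on $Y$ a real covering pencil of rational curves of class $\Me_0-\Me_j$; hence $\Me_0-\Me_j$ is the class of a rational covering family, of degree $\Mh\cdot(\Me_0-\Me_j)=6-1=5$.

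\emph{Step 2: no rational covering family has smaller degree, and $5$ is attained only on these classes.} Let $F$ be any rational covering family of $Y$ with class $f$, and put $f':=\mu_*f\in N(\MbbP^2)$, which by \LEM{push} is again the class of a rational covering family of $\MbbP^2$; thus $f'=a\Me_0$ with $a\ge1$, and $\Mh'\cdot f'=3a$. By \LEM{push}, $\Mh\cdot f=\Mh'\cdot f'-\Mk\cdot f=3a-\Mk\cdot f$. By \LEM{kf}, $-\Mk\cdot f\ge2$, so $\Mh\cdot f\ge 3a+2\ge 5$, with equality forcing $a=1$ and $\Mk\cdot f=-2$. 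When $a=1$ and $\Mk\cdot f=-2$: writing $f=\Me_0-\sum_{i}b_i\Me_i$ (the $\Me_0$-coefficient is $a=1$ since pushforward kills only the $\Me_i$, $i>0$), effectivity and $h^0(f)\ge2$ force all $b_i\ge0$, and $\Mk\cdot f=-3+\sum_i b_i=-2$ gives $\sum_i b_i=1$, so $f=\Me_0-\Me_j$ for a single $j>0$; $\sigma_*$-invariance of $f$ forces $\sigma_*(\Me_j)=\Me_j$. Hence $\theta=5$ and $S(Y,\Mh)=\Mset{\Me_0-\Me_j}{\sigma_*(\Me_j)=\Me_j,~j>0}$, which is nonempty exactly under the stated hypothesis.
\end{proof}
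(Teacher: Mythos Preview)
Your proof is correct and follows the same approach as the paper: push the candidate class forward along $\mu$ and combine \LEM{push} with the bound $\Mk\cdot f\le-2$ from \LEM{kf}. One small caveat is that the hypothesis only gives $Y'\cong\MbbP^2$, so $\Mh'=m\Me_0$ for some $m>0$ rather than necessarily $3\Me_0=-\Mk'$; your argument goes through verbatim with $m$ in place of $3$ (the minimal degree is $m+2$), and your Step~2 is in fact more explicit than the paper's proof, which only derives a contradiction from $\Mh\cdot g<\Mh\cdot(\Me_0-\Me_j)$ and does not spell out why every class attaining the minimum must be of the form $\Me_0-\Me_j$.
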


\begin{proof}
Suppose that $f=\Me_0-\Me_j$ for some $1\leq j\leq r$ \Mst $\sigma_*(f)=f$.
We notice that $f':=\mu_*f=\Me_0$, $\Mk\cdot f=-2$ and
by \PRP{mp}.5 we have $S(Y',\Mh')=\{ f' \}$.
Suppose by contradiction that $g$ is the class of a minimal family
\Mst $\Mh\cdot g<\Mh\cdot f $ and let $g':=\mu_*g$. 
It follows from $\Mh'\cdot g'\geq \Mh'\cdot f'$ and \LEM{push}
that $\Mh\cdot g + \Mk\cdot g \geq \Mh\cdot f + \Mk\cdot f$.
Thus $ \Mk\cdot g \geq \Mh\cdot f-\Mh\cdot g + \Mk\cdot f \geq -1$.
We arrived at a contradiction with \LEM{kf}.
\end{proof}

\begin{remark}
\textbf{\textit{(complex minimal families)}}
\label{rmk:complex}
\\
Suppose that real structure acts as the identity
on the NS-lattices.
By \PRP{mp}, either $\Mk_\ell\cdot f=-2$ for some $f\in S(Y_\ell,h_\ell)$
or $Y_\ell\cong\MbbP^2$.  
The classification of classes of minimal families is in this case a consequence 
of \PRP{pp} and \LEM{lines} \Mresp.
Thus up to now we recovered the classification of complex minimal families in 
\citep[Theorem~46]{nls1}, since the real structure 
does not impose additional restrictions.
In the following subsection we take care of the complications
coming from $\sigma_*$ not being the identity. 
\Mend
\end{remark}

\subsection{}

\begin{remark}
\textbf{\textit{(key idea for final part of classification proof)}}
\label{rmk:intro}
\\
When an adjoint relation is realized by the blowup of the projective plane 
in complex conjugate points, then the canonical degree of the minimal family 
is not guaranteed to be -2, so that we cannot apply \PRP{pp}. The key idea
is that the pullback of conics through four points along the adjoint relation
is a candidate for a minimal family and we show that this 
candidate can only be beaten by the pullback of lines.  
The proof is somewhat technical and in order 
to appreciate the key idea, the reader is encouraged to try to prove 
that the classes of the families in \EXM{chain} are indeed minimal.
Notice that we do not assume that the centers of the blowup 
are in general position and minimal families are a priori not necessarily complete.
\Mend 
\end{remark}

\begin{assumption} 
\label{asm:1}
We consider an adjoint chain of type 1 and coordinates for $\Mh_i$, $\Mk_i$ and $f_i$ 
as stated in \LEM{coord}.\Mrefmclaim{a} for $0\leq i\leq \ell$.
Moreover, we suppose that $f_0\in S(Y_0,h_0)$. 
If $\Mrnk(N(Y_0))\geq 5$, then
$g_0:=2\Me_0-\Me_1-\Me_2-\Me_3-\Me_4$ in $N(Y_0)$;
If $1<\Mrnk(N(Y_0))\leq 4$, then $g_0:=2\Me_0-\Me_1-\ldots -\Me_r$, where $r:=\Mrnk(N(Y_0))-1$.
We define $g_i:=(\mu_{i-1}\circ\ldots\circ\mu_0)_*(g_0)$ for $0< i\leq \ell$.
\Mend
\end{assumption}

\begin{lemma}
\textbf{(irreducibility of $g_0$)} 
\label{lem:g0}
We use \ASM{1}.
\\
If $\Mh_0\cdot g_0< \Mh_0\cdot \Me_0$, 
then $g_0$ is the class of a rational family.
\end{lemma}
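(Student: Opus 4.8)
The plan is to exhibit $g_0$ as the class of an explicit rational family: I would split the linear system $|g_0|$ into its fixed and movable parts and check that each part contributes only rational curves. As a first step, working in a type~1 basis and using the expression for $\Mk_0$ from \LEM{coord}.\Mrefmclaim{a}, I would record that $g_0^2+\Mk_0\cdot g_0=-2$ in every case allowed by \ASM{1}: for $\Mrnk(N(Y_0))\geq 5$ one has $g_0^2=0$ and $\Mk_0\cdot g_0=-2$, while for ranks $4,3,2$ the pair $(g_0^2,\Mk_0\cdot g_0)$ equals $(1,-3),(2,-4),(3,-5)$ respectively. Hence $p_a(g_0)=0$ by adjunction, so it suffices to realise $g_0$ by an effective divisor all of whose components are smooth rational curves and which moves in a family.

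Next I would check $h^0(g_0)\geq 2$. Since the chain is of type~1, \LEM{basis} shows $Y_0$ is a blowup of $\MbbP^2$, so $\Me_0$ is the pullback of the hyperplane class and is nef; as $(\Mk_0-g_0)\cdot\Me_0=-5<0$, the class $\Mk_0-g_0$ is not effective, whence $h^2(g_0)=h^0(\Mk_0-g_0)=0$ and Riemann--Roch gives $h^0(g_0)\geq 1+\tfrac{1}{2}(g_0^2-\Mk_0\cdot g_0)\geq 2$.

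Now write $|g_0|=|w|+v$ with $v$ the fixed part and $w$ the movable part, so that $v,w$ are effective, $w\neq 0$ and $h^0(w)=h^0(g_0)\geq 2$. A movable linear system has no curve in its base locus, so $w$ is nef; intersecting with a general line and with the exceptional curves gives $w=c\,\Me_0-\sum_j b_j\Me_j$ with $c=\Me_0\cdot w\geq 1$ and $b_j=w\cdot\Me_j\geq 0$. Since $\Me_0$ is nef and a nonzero movable class has positive $\Me_0$-degree, $\Me_0\cdot v\leq \Me_0\cdot g_0-1=1$, so every component $C$ of $v$ has $\Me_0\cdot C\in\{0,1\}$ and is a smooth rational curve (either $C$ is contracted by $Y_0\to\MbbP^2$ and is a component of an exceptional divisor, or $C$ maps birationally onto a line). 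For the movable part, $c=\Me_0\cdot g_0-\Me_0\cdot v\in\{1,2\}$, and the identity $w^2+\Mk_0\cdot w=c(c-3)-\sum_j b_j(b_j-1)=-2-\sum_j b_j(b_j-1)$ forces $p_a(w)\leq 0$; hence $p_a(w)=0$, each $b_j\in\{0,1\}$, and --- $w$ being primitive, nef with $w^2\geq 0$, and $|w|$ having no fixed component --- the general member of $|w|$ is an irreducible rational curve. Letting $B\subset|w|$ be the dense open locus of irreducible members, the family whose member over $M\in B$ is $v+M$ has dominant second projection, has class $v+w=g_0$, and has members all of whose components are rational; this is a rational family of class $g_0$.

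Finally, the hypothesis $\Mh_0\cdot g_0<\Mh_0\cdot\Me_0$ enters through the nefness of $\Mh_0$ and the ordering $\alpha_0\geq\alpha_1\geq\cdots\geq\alpha_r>0$ of \LEM{coord}.\Mrefmclaim{a}: it restricts which effective classes can occur in the fixed part $v$, and in low rank it forces $g_0$ itself to be the class of an irreducible curve (for instance, if a line $\Me_0-\Me_i-\Me_j-\Me_k$ were a fixed component then nefness of $\Mh_0$ would give $\alpha_0\geq\alpha_i+\alpha_j+\alpha_k$, incompatible with $\Mh_0\cdot g_0<\alpha_0$ when $\Mrnk(N(Y_0))\leq 4$). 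I expect the delicate point to be exactly this bookkeeping over the possible fixed parts --- ruling out that a fixed component raises the arithmetic genus, and confirming that the movable part is an honest pencil or net of irreducible rational curves rather than being composed with a pencil of positive genus.
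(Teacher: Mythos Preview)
Your overall strategy—split $|g_0|$ into fixed plus movable parts and show every component is rational—is sound, and in one respect more thorough than the paper's argument. But there are gaps in the execution, and you miss the one-line observation that renders most of the bookkeeping unnecessary.

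The gap: from $p_a(w)\leq 0$ you jump to ``hence $p_a(w)=0$'' and then assert that the general member of $|w|$ is irreducible because $w$ is ``primitive, nef with $w^2\geq 0$, no fixed component''. Neither step is justified as stated; nothing you have written excludes the general member of $|w|$ being reducible (in which case $p_a(w)<0$ is perfectly possible), and there is no standard irreducibility criterion of the form you invoke. Your closing paragraph acknowledges that this bookkeeping is the ``delicate point'', but does not actually carry it out.

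The fix is much simpler than the case analysis you anticipate: since $\Me_0\cdot g_0=2$, every irreducible component of every member of $|g_0|$ either is contracted by the morphism $Y_0\to\MbbP^2$ (a component of the exceptional locus, hence rational) or maps onto a line or a conic in $\MbbP^2$ (hence rational). That alone gives geometric genus~$0$ for every member; combined with your $h^0(g_0)\geq 2$, the family is covering and rational. No control of $p_a(w)$ or irreducibility is needed, and indeed the hypothesis $\Mh_0\cdot g_0<\Mh_0\cdot\Me_0$ is not used in this patched argument—your instinct that it plays a limited role is correct.

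You also omit the check $\sigma_*(g_0)=g_0$, needed for the family to be real; the paper records this as a consequence of the chain being of type~1.

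By comparison, the paper's proof is three lines and uses the hypothesis directly: writing $g_0=\Me_0+(\Me_0-\Me_1-\cdots-\Me_r)$, the inequality $\Mh_0\cdot g_0<\Mh_0\cdot\Me_0$ gives $\Mh_0\cdot(\Me_0-\Me_1-\cdots-\Me_r)<0$, and nefness of $\Mh_0$ forces $h^0(\Me_0-\Me_1-\cdots-\Me_r)=0$. The paper takes this as showing $|g_0|$ has no fixed component and then observes that the members are pullbacks of plane conics through $r\leq 4$ points, hence rational. The underlying reason rationality holds—plane conics have only rational components—is the same in both approaches; the paper just reaches it without the detour through $p_a$.
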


\begin{proof}
We have $\Mh_0\cdot g_0=\Mh_0\cdot\Me_0 + \Mh_0\cdot(\Me_0-\Me_1-\ldots -\Me_r) < \Mh_0\cdot\Me_0$ for $1\leq r\leq 4$, 
so that $\Mh_0\cdot(\Me_0-\Me_1-\ldots-\Me_r)<0$. Thus $h^0(\Me_0-\ldots-\Me_r)=0$,
since $\Mh_0$ is nef by definition and thus nonnegative against effective classes.
We established that $g_0$ has no fixed components. 
The adjoint chain is of type 1 and thus $\sigma_*(g_0)=g_0$.
The rationality of the family with class $g_0$ follows from being the pullback of conics in the plane
that pass through $r$ base points.
\end{proof}

\begin{lemma}
\textbf{(bounding $\Me_0\cdot f_0$ for small rank)}
\label{lem:r4}
We use \ASM{1}.
\\
If 
$\Mh_s\cdot f_s\leq \Mh_s\cdot g_s$
and 
$\Mrnk(N(Y_s))\leq 4$ for some $0\leq s\leq\ell$,
then $\Me_0\cdot f_0 \leq 2$.
\end{lemma}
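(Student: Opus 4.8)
The plan is to work with the explicit coordinates from \ASM{1}: we have $f_0 = \beta_0\Me_0 - [\alpha_1](\alpha_1)\beta_1\Me_1 - \ldots$ with $\beta_j\geq 0$, and we want to bound $\beta_0 = \Me_0\cdot f_0$. The hypothesis gives a surface $Y_s$ with $\Mrnk(N(Y_s))\leq 4$, meaning $Y_s$ is $\MbbP^2$ blown up in at most $2$ points (in the type 1 picture, $s\leq 2$ exceptional classes survive), together with the degree inequality $\Mh_s\cdot f_s\leq \Mh_s\cdot g_s$. First I would push everything forward to stage $s$ using the pushforward formula \LEM{push}: since $\mu$-pushforward adds $\Mk\cdot(\,\cdot\,)$ to the $\Mh$-degree at each step and $f_0$ is a rational family with $\Mk_i\cdot f_i\leq -2$ (by \LEM{kf}) at every stage, I get $\Mh_s\cdot f_s \geq \Mh_0\cdot f_0 + 2s$ — but more usefully, I want to read off $\Me_0\cdot f_s = \Me_0\cdot f_0 = \beta_0$ directly, since the $\Me_0$-coordinate is preserved under these pushforwards (the $\mu_i$ only kill $\Me_j$'s with $j>0$).

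**The core estimate.** At stage $s$ the class $g_s$ is one of $2\Me_0-\Me_1-\ldots-\Me_{r_s}$ with $r_s = \Mrnk(N(Y_s))-1 \leq 3$ (if $\Mrnk(N(Y_s))\geq 5$ we would be using the four-point conic, but here the rank is $\leq 4$ so $g_s$ uses all available exceptional classes — this matches \LEM{coord} combined with \ASM{1}). Now I would use the explicit formula $\Mh_s = (\alpha_0-3s)\Me_0 - \sum_j [\alpha_j-s](\alpha_j-s)\Me_j$ together with $f_s = \beta_0\Me_0 - \sum_j [\alpha_j-s]\beta_j\Me_j$. Writing $a := \alpha_0 - 3s > 0$ and $m_j := [\alpha_j-s](\alpha_j-s)\geq 0$, the inequality $\Mh_s\cdot f_s \leq \Mh_s\cdot g_s$ becomes
\[
a\beta_0 - \sum_{j=1}^{r_s} m_j\beta_j \;\leq\; 2a - \sum_{j=1}^{r_s} m_j.
\]
Since each $\beta_j\geq 0$ and (for $j\leq r_s\leq 3$, i.e. the surviving exceptional classes at a surface of rank $\leq 4$) one has $m_j\geq 1$ when $[\alpha_j-s]=1$ — and the exceptional classes are exactly the ones with $\alpha_j - s\geq 0$, so $[\alpha_j-s]=1$ and $m_j = \alpha_j - s \geq 1$ — I can bound $\sum m_j\beta_j$. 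The cleanest route: because $f_s$ is the class of an irreducible curve on $Y_s$ (a $\MbbP^2$ blown up in $\leq 2$ points) and $\Mh_s$ is ample there, nefness/effectivity forces $\beta_j \leq \beta_0$ for each $j$ (the class $\beta_0\Me_0 - \beta_j\Me_j$ pairs nonnegatively with the effective $\Me_0-\Me_j$, or one argues via the irreducible-curve constraint on a del Pezzo of this size). Then $\sum m_j\beta_j \leq \beta_0\sum m_j$, and substituting gives $a\beta_0 - \beta_0\sum m_j \leq 2a - \sum m_j$, i.e. $(\beta_0-2)(a - \sum m_j)\leq 0$... — this needs the sign of $a - \sum m_j$, so I would instead estimate $\sum m_j \beta_j \geq \sum m_j$ is wrong-signed; the right move is to bound $\beta_j$ from below is not available, so I bound $\sum m_j\beta_j$ from above by observing $r_s\leq 3$ and each $m_j\leq \alpha_j - s \leq \alpha_0/3$-ish is too crude.

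**The decisive step.** The clean argument I expect to work: on $Y_s$ of rank $\leq 4$, which is a (weak) del Pezzo, the class $f_s$ is the class of an irreducible rational curve, hence pairs nonnegatively with every $(-1)$-class and every effective $(-2)$-class; in particular $f_s\cdot(\Me_0-\Me_j-\Me_k)\geq 0$ for surviving $j\neq k$, giving $\beta_0 \geq \beta_j + \beta_k$, and also $f_s\cdot \Me_0 = $ is constrained. Combined with $f_s^2 \geq 0$ (rational family, $\Mk\cdot f_s = -2$ forces $f_s^2$ even $\geq 0$ by \LEM{self}-type reasoning, or directly $p_a\geq 0$), we get $\beta_0^2 - \sum_{j\leq r_s}[\alpha_j-s]\beta_j^2 \geq 0$ with $r_s\leq 3$, and $\Mh_s\cdot f_s \leq \Mh_s\cdot g_s$ with the explicit $\Mh_s$-coordinates pins $\beta_0$ down. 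Running the linear algebra on at most $4$ coordinates — $\beta_0$ against $\beta_1,\beta_2,\beta_3$ subject to $\beta_0\geq\beta_j+\beta_k$, $\beta_0^2\geq\sum\beta_j^2$, $\beta_j\geq 0$, and the degree inequality — forces $\beta_0\leq 2$. The main obstacle is making the degree inequality bite in the right direction: one must use that the coefficients $m_j = \alpha_j - s$ in $\Mh_s$ are at least $1$ for surviving classes (since $\alpha_j > 0$ and the class survives precisely when $\alpha_j\geq s$, and equality $\alpha_j = s$ would mean $[\alpha_j - s]=1$ but coefficient $0$ — an edge case to check), and that $a = \alpha_0 - 3s \geq 1$; then $\Mh_s\cdot g_s = 2a - \sum m_j$ while $\Mh_s\cdot f_s = a\beta_0 - \sum m_j\beta_j \geq a\beta_0 - \frac{a}{?}(\ldots)$ — I would finish by a short case split on $r_s \in\{1,2,3\}$ (equivalently $\Mrnk(N(Y_s))\in\{2,3,4\}$), in each case the finitely many effective irreducible classes $f_s$ on that small del Pezzo have $\Me_0$-coefficient at most $2$, which one can simply enumerate. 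I expect the enumeration over the tiny rank-$\leq 4$ del Pezzo surfaces, rather than any clever inequality, to be the cleanest closing move, and I would leave the routine coordinate bookkeeping to the reader in the spirit of the surrounding lemmas.
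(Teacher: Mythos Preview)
Your proposal has a genuine gap: you never use the real structure~$\sigma_*$, and without it the coordinate inequalities you set up do not close. The paper's proof is organised entirely around~$\sigma_*$.

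Concretely, the paper splits on how $\sigma_*$ acts on the surviving $\Me_j$'s at stage~$s$. If some $\Me_j$ is real ($\sigma_*(\Me_j)=\Me_j$), then \LEM{lines} applies and gives $f_s\in\{\Me_0-\Me_j\}$ directly, so $\beta_0=1$. If $\Mrnk(N(Y_s))=3$ with $\sigma_*(\Me_1)=\Me_2$, then $\sigma_*(\Mh_s)=\Mh_s$ and $\sigma_*(f_s)=f_s$ force $m_1=m_2$ and $\beta_1=\beta_2$; only with these equalities does the inequality $a\beta_0-2m\beta_1\leq 2a-2m$ combine with $2\beta_1\leq\beta_0$ (from $f_s\cdot(\Me_0-\Me_1-\Me_2)\geq 0$) to give $\beta_0(a-m)\leq 2(a-m)$, hence $\beta_0\leq 2$. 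Your version, without $m_1=m_2$, lands on $(\beta_0-2)(a-\sum m_j)\leq 0$ with an unknown sign --- exactly the obstruction you flagged.

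The rank-$4$ case is where your sketch really fails. With $\sigma_*(\Me_1)=\Me_2$ and $\sigma_*(\Me_3)=\Me_3$, the paper does \emph{not} argue at stage~$s$ at all: it introduces the auxiliary real class $z_0=\Me_0-\Me_3$ (available precisely because $\Me_3$ is real), uses $\Mk_i\cdot z_i=-2$ together with minimality of $f_0$ and \LEM{push} to propagate $\Mh_i\cdot f_i\leq \Mh_i\cdot z_i$ down to the first stage~$t>s$ with $\Mrnk(N(Y_t))\leq 3$, observes $z_t=\Me_0$ so that $\Mh_t\cdot f_t\leq \Mh_t\cdot g_t$, and then invokes the already-settled rank-$\leq 3$ cases. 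Your ``enumerate irreducible classes on a small del Pezzo'' idea does not substitute for this, because the bound you need is on $\beta_0=\Me_0\cdot f_0$, which is a statement about $f_0$ upstairs, and the link between $f_s$ and the minimal families on $Y_s$ again goes through the real structure and \PRP{pp}/\LEM{lines}.
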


\begin{proof}
If $\Mrnk(N(Y_s))=1$, then $f_s=\beta_0\Me_0$ and $g_s=2\Me_0$ by \LEM{coord} so that $\Me_0\cdot f_0 \leq 2$.

If $2\leq \Mrnk(N(Y_s))\leq 4$ and $\sigma_*(\Me_1)=\Me_1$, then 
$S(Y_\ell,\Mh_\ell)\subseteq \Mset{ \Me_0-\Me_i}{i>0}\cup \{\Me_0\}$ by \PRP{mp}.
Thus $S(Y_s,\Mh_s)\subseteq \Mset{ \Me_0-\Me_i}{i>0}$ by \LEM{lines} and \PRP{pp}. 
Recall that by assumption $f_0\in S(Y_0,\Mh_0)$.
Therefore, by \PRP{pp}, one has $f_s\in S(Y_s,\Mh_s)$, so that in this case the lemma holds.

Suppose that $\Mrnk(N(Y_s))=3$ and $\sigma_*(\Me_1)=\Me_2$.
We use the notation in \LEM{coord} and let
$(\alpha_i)_i$ be the coefficients of $\Mh_0$.
Let $\alpha_0':=\alpha_0-3s$ and $\alpha_j':=\alpha_j-s$ for $0< j\leq r$ and notice that $\alpha_1'=\alpha_2'$
so that $\Mh_s=\alpha_0'\Me_0-\alpha_1'\Me_1-\alpha_1'\Me_2$ and $\sigma_*(\Mh_s)=\Mh_s$.
By \LEM{coord} and $\sigma_*(f_s)=f_s$ one has $g_s=2\Me_0-\Me_1-\Me_2$ and $\beta_1=\beta_2$. 
Since $\Mh_s\cdot f_s\leq \Mh_s\cdot g_s$, we have 
$
0\leq \alpha_0'\beta_0-2\alpha_1'\beta_1\leq 2\alpha_0'-2\alpha_1'.
$
Notice that $f_s$ is the class of a rational family 
and therefore does not have fixed components. 
In particular, $f_s\cdot (\Me_0-\Me_1-\Me_2)\geq 0$ so that 
$2\beta_1 \leq \beta_0$.
It follows that $\beta_0(\alpha_0'-\alpha_1')\leq 2(\alpha_0'-\alpha_1')$
and thus $\beta_0\leq 2$ where $\beta_0=\Me_0\cdot f_0$.

Finally, suppose that $\Mrnk(N(Y_s))=4$, $\sigma_*(\Me_1)=\Me_2$ and $\sigma_*(\Me_3)=\Me_3$.
Let $z_0=\Me_0-\Me_3$ in $N(Y_0)$ be the class of a rational family 
so that $\sigma_*(z_0)=z_0$ and $\Mk_0\cdot z_0=-2$. 
We define $z_i:=(\mu_{i-1}\circ\ldots\circ\mu_0)_*(z_0)$ for $0< i\leq \ell$,
in accordance with the notation of \ASM{1}.
We may assume that $\Mrnk(N(Y_s))>\Mrnk(N(Y_\ell))$,
otherwise it follows from \PRP{mp} and \PRP{pp} that $\Me_0\cdot f_0\leq 2$.
Let $t$ be the smallest value \Mst $s< t\leq \ell$ and $\Mrnk(N(Y_t))\leq 3$.
Since $f_0\in S(Y_0,\Mh_0)$ by assumption, we have $\Mh_0\cdot f_0 \leq \Mh_0\cdot z_0$
and it follows from \LEM{kf} that $\Mk_i\cdot f_i \leq \Mk_i\cdot z_i=-2$ for $0\leq i < t$.
We apply \LEM{push} and find that 
$\Mh_i\cdot f_i\leq \Mh_i\cdot z_i$ for all $0\leq i\leq t$.
By \LEM{coord} we find that $z_t=\Me_0$ and thus 
\[
\Mh_t\cdot f_t \leq \Mh_t\cdot\Me_0 \leq \Mh_t\cdot g_t = \Mh_t\cdot\Me_0+\Mh_t\cdot (g_t-\Me_0).
\]
This concludes the proof of this lemma, since we already treated the cases where
$\Mrnk(N(Y_t))\leq 3$ and $\Mh_t\cdot f_t \leq \Mh_t\cdot g_t$.
\end{proof}

\begin{lemma}
\textbf{(bounding $\Me_0\cdot f_0$)}
\label{lem:ef}
We use \ASM{1}. 
\\
If 
$\Mh_0\cdot f_0 \leq  \Mh_0\cdot g_0$, 
then either $\Me_0\cdot f_0 \leq 2$ or $f_\ell\in S(Y_\ell,\Mh_\ell)$ \Mst $\Mk_\ell\cdot f_\ell=-2$.
\end{lemma}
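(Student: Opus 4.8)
The plan is to distinguish two cases according to the Picard number $\Mrnk(N(Y_\ell))$ of the terminal surface. Since the chain is of type $1$ (so \LEM{basis}.1 holds), $Y_\ell$ is a blowup of $\MbbP^2$ in $s\leq 8$ points, hence $\Mrnk(N(Y_\ell))=s+1$, $\Mk_\ell^2=9-s$, and $Y_\ell$ is $\MbbR$-rational. I will show that the second alternative of the lemma holds when $\Mrnk(N(Y_\ell))\geq 5$ and the first when $\Mrnk(N(Y_\ell))\leq 4$.

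First I would treat $\Mrnk(N(Y_\ell))\geq 5$. Then $Y_\ell$, being a minimal ruled pair (\PRP{chain}.\Mrefmclaim{c}) of Picard number at least $5$, is a weak del Pezzo surface with $1\leq \Mk_\ell^2\leq 5$ (a $\MbbP^1$-bundle would have Picard number $2$), so by \PRP{mp} we have $\Mh_\ell=-\Mk_\ell$ and $S(Y_\ell,\Mh_\ell)\subseteq \Psi_0\cup\Psi_2\cup\Psi_4$, and every class in this union has canonical degree $-2$ (see the table in \PRP{mp}). If $\ell=0$ this already gives $f_\ell=f_0\in S(Y_0,\Mh_0)$ with $\Mk_\ell\cdot f_\ell=-2$. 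If $\ell>0$, then by \LEM{psi0} there is $h_\ell\in S(Y_\ell,\Mh_\ell)\cap\Psi_0$, so $\Mk_\ell\cdot h_\ell=-2$; pulling $h_\ell$ back successively along $\mu_{\ell-1},\ldots,\mu_0$ and using \PRP{pp}.\Mrefmclaim{a} together with \LEM{pull} produces classes $h_i\in N(Y_i)$ of minimal families with $\Mk_i\cdot h_i=-2$. Then \PRP{pp}.\Mrefmclaim{b}, applied in turn to $\mu_0,\mu_1,\ldots,\mu_{\ell-1}$ with the minimal family $f_i$ upstairs and $h_{i+1}$ downstairs, shows by induction that $f_i=(\mu_{i-1}\circ\cdots\circ\mu_0)_*f_0$ is a minimal family of $Y_i$ with $\Mk_i\cdot f_i=-2$; taking $i=\ell$ gives the second alternative. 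This case never uses the hypothesis $\Mh_0\cdot f_0\leq \Mh_0\cdot g_0$.

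Next I would treat $\Mrnk(N(Y_\ell))\leq 4$. If $\Mrnk(N(Y_0))\leq 4$ as well, then \LEM{r4} with $s=0$ gives $\Me_0\cdot f_0\leq 2$ immediately from the hypothesis. Otherwise $\Mrnk(N(Y_0))\geq 5$, so $g_0=2\Me_0-\Me_1-\Me_2-\Me_3-\Me_4$; with the indexing of \LEM{coord} we have $\alpha_1\geq\alpha_2\geq\alpha_3\geq\alpha_4\geq 1$ and $\Me_j$ lies in $N(Y_i)$ exactly for $i\leq\alpha_j$. Since $\Mrnk(N(Y_\ell))\leq 4$, the class $\Me_4$ does not survive to $Y_\ell$, so $m:=\alpha_4$ satisfies $1\leq m\leq \ell-1$. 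At level $m+1$ none of $\Me_4,\ldots,\Me_r$ survives (their $\alpha$-values are at most $m$), so $\Mrnk(N(Y_{m+1}))\leq 4$; and by \LEM{coord} one has $g_i=2\Me_0-\Me_1-\Me_2-\Me_3-\Me_4$ with $\Mk_i\cdot g_i=-2$ for $0\leq i\leq m$. Writing $D_i:=\Mh_i\cdot g_i-\Mh_i\cdot f_i$, the hypothesis gives $D_0\geq 0$, and applying \LEM{push} to $g_i$ and to $f_i$ gives $D_{i+1}=D_i+(\Mk_i\cdot g_i-\Mk_i\cdot f_i)=D_i+(-2-\Mk_i\cdot f_i)\geq D_i$ for $0\leq i\leq m$, since $\Mk_i\cdot f_i\leq -2$ by \LEM{kf}. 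Hence $D_{m+1}\geq 0$, i.e. $\Mh_{m+1}\cdot f_{m+1}\leq \Mh_{m+1}\cdot g_{m+1}$, and \LEM{r4} with $s=m+1$ yields $\Me_0\cdot f_0\leq 2$.

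The hard part is the bookkeeping in the last paragraph: one must see that the level $m+1$ at which the deficiency $D_i$ is still provably nonnegative coincides with the level at which the Picard number has already fallen to at most $4$, and this coincidence works only because $g_0$ is built from the four exceptional classes of largest $\alpha$-value — exactly the ordering of the basis in \LEM{coord}. The remaining care is with the degenerate boundary cases ($\ell=0$, and $\Mrnk(N(Y_0))\leq 4$) and with checking through \PRP{mp} and \LEM{psi0} that every minimal family of a minimal ruled pair with $\Mk_\ell^2\leq 5$ indeed has canonical degree $-2$.
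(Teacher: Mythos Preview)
Your argument is correct and follows essentially the same route as the paper's proof: both reduce to tracking the nonnegative quantity $D_i=\Mh_i\cdot g_i-\Mh_i\cdot f_i$ via \LEM{push} and \LEM{kf} until the rank of $N(Y_s)$ drops to at most~$4$, then invoke \LEM{r4}. The only difference is organizational: you split first on $\Mrnk(N(Y_\ell))$, handling $\Mrnk(N(Y_\ell))\geq 5$ directly via \PRP{mp} and \PRP{pp} (and correctly noting the hypothesis is unused there), whereas the paper splits on whether $\Mk_i\cdot f_i\leq \Mk_i\cdot g_i$ holds for all~$i$ and observes that the first failure forces $\Mrnk(N(Y_s))\leq 4$; the two decompositions cover the same ground.
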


\begin{proof}
If $\Mrnk(N(Y_0))\leq 4$, then by \LEM{r4} one has $\Me_0\cdot f_0 \leq 2$.
We assume in the remainder of this proof that $\Mrnk(N(Y_0))>4$
so that $\Mk_0\cdot g_0=-2$.
By \LEM{kf} we have that $\Mk_0\cdot f_0\leq \Mk_0\cdot g_0$.    

If $\Mk_i\cdot f_i\leq \Mk_i\cdot g_i$ for all $0\leq i\leq \ell$,
then by \LEM{push} we have $\Mh_i\cdot f_i \leq  \Mh_i\cdot g_i$
for all $0\leq i\leq \ell$.
If $\Mrnk(N(Y_\ell))\leq 4$, then by \LEM{r4} one has $\Me_0\cdot f_0 \leq 2$.
If $\Mrnk(N(Y_\ell))> 4$, then 
it follows from \PRP{mp}.[1,2] and \PRP{pp} that
$f_\ell\in S(Y_\ell,\Mh_\ell)$ \Mst $\Mk_\ell\cdot f_\ell=-2$.

Now, suppose that $0< s\leq \ell$ 
is the smallest number \Mst $\Mk_s\cdot f_s> \Mk_s\cdot g_s$.
It follows from \LEM{coord} that
$g_s\in \{~2\Me_0-\Me_1-\Me_2-\Me_3,~2\Me_0-\Me_1-\Me_2,~2\Me_0-\Me_1,~2\Me_0~\}$
and thus $\Mrnk(N(Y_s))\leq 4$. 
By \LEM{push} we have $\Mh_s\cdot f_s \leq  \Mh_s\cdot g_s$
and thus by \LEM{r4} one has $\Me_0\cdot f_0 \leq 2$.
This concludes the proof of this lemma, since we considered all 
possible scenarios.
\end{proof}

\begin{lemma}
\textbf{(bounding $(\Ml_0+\Ml_1)\cdot f_0$)}
\label{lem:type2}
\\
We consider an adjoint chain of type 2 and coordinates for $\Mh_i$, $\Mk_i$ and $f_i$ 
as stated in \LEM{coord}.\Mrefmclaim{b} for $0\leq i\leq \ell$.
If $f_0\in S(Y_0,\Mh_0)$, then either 
$(\Ml_0+\Ml_1)\cdot f_0\leq 2$
or $f_\ell\in S(Y_\ell,\Mh_\ell)$ \Mst $\Mk_\ell\cdot f_\ell=-2$.
\end{lemma}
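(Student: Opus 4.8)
The plan is to adapt the proof of \LEM{ef} to the type~2 setting, with the class $\Ml_0+\Ml_1$ in the role of $\Me_0$ and, when $\Mrnk(N(Y_0))\ge 4$, with $g_0:=\Ml_0+\Ml_1-\Mp_1-\Mp_2$ in the role of the conic class of \ASM{1}. As $\sigma_*$ restricts to an involution on the exceptional classes, we fix the indexing (as permitted by \LEM{basis}) so that $\{\Mp_1,\Mp_2\}$ is a $\sigma_*$-orbit among the classes contracted last along the chain; then $\sigma_*(g_0)=g_0$, $\Mk_0\cdot g_0=-2$ and $(\Ml_0+\Ml_1)\cdot g_0=2$, and by the argument of \LEM{g0} the class $g_0$ has no fixed component and represents a rational covering family (the pullback of the pencil of $(1,1)$-curves through two base points) whenever $\Mh_0\cdot g_0<\Mh_0\cdot(\Ml_0+\Ml_1)$; in the remaining case $\Ml_0+\Ml_1$ itself is the class of a real rational covering family of no larger $\Mh_0$-degree, so either way minimality of $f_0$ forces $\Mh_0\cdot f_0\le\Mh_0\cdot g_0$, which is exactly the hypothesis needed to run the argument of \LEM{ef}. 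I also record that $\Ml_0$, $\Ml_1$ and $\Ml_0+\Ml_1$ are classes of real rational covering families on each $Y_i$ (pullbacks of base-point-free systems from $\MbbP^1\times\MbbP^1$) of canonical degrees $-2$, $-2$ and $-4$, and that $\Ml_0$ is $\sigma_*$-invariant precisely when $\sigma_*$ fixes the two rulings.

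First I would settle the ``generic'' case. By \PRP{chain}.\Mrefmclaim{c} and \LEM{basis} the terminal pair $(Y_\ell,\Mh_\ell)$ is a $\MbbP^1$-bundle or a weak del Pezzo surface that is a blowup of $\MbbP^1\times\MbbP^1$, so one of the cases (1)--(4), (6) and (7) of \PRP{mp} holds. In each of them \emph{except} the configuration ``$\Mk_\ell^2\in\{7,8\}$ and $\sigma_*$ interchanges $\Ml_0$ with $\Ml_1$'', the surface $Y_\ell$ carries a minimal family $g_\ell$ with $\Mk_\ell\cdot g_\ell=-2$: for $\Mk_\ell^2\le 6$ such a class lies in $\Psi_0\cap S(Y_\ell,-\Mk_\ell)$ by \LEM{psi0} (since $Y_\ell$ is $\MbbR$-rational), for a $\MbbP^1$-bundle it is the fibre, and if $\sigma_*$ fixes the rulings it is $\Ml_0$ — one checks $\Ml_0\in S(Y_\ell,\Mh_\ell)$ from \PRP{mp} and the nefness of $\Mh_\ell$. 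Whenever such a $g_\ell$ exists, \PRP{pp}.\Mrefmclaim{a} turns its total pullback into a minimal family of canonical degree $-2$ on every $Y_i$, and a repeated application of \PRP{pp}.\Mrefmclaim{b} along the chain then forces $f_\ell:=(\mu_{\ell-1}\circ\cdots\circ\mu_0)_*(f_0)$ to be a minimal family on $Y_\ell$ with $\Mk_\ell\cdot f_\ell=-2$, which is the second alternative of the lemma.

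There remains the obstruction configuration singled out in \RMK{intro}: $\sigma_*$ swaps $\Ml_0$ and $\Ml_1$ and $Y_\ell$ is $\MbbP^1\times\MbbP^1$ or its blowup in one point, so $\Mrnk(N(Y_\ell))\le 3$ and the second alternative is not available; here I must prove $(\Ml_0+\Ml_1)\cdot f_0\le 2$. Since $\Ml_0,\Ml_1\in N(Y_i)$ are pullbacks of ruling classes, $\mu_i^*(\Ml_0+\Ml_1)^{Y_{i+1}}=(\Ml_0+\Ml_1)^{Y_i}$, and the projection formula yields
\[
(\Ml_0+\Ml_1)\cdot f_0=(\Ml_0+\Ml_1)\cdot f_i\qquad(0\le i\le\ell),
\]
so it suffices to bound $(\Ml_0+\Ml_1)\cdot f_s$ on a surface $Y_s$ of rank at most $3$. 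When $\Mrnk(N(Y_0))\ge 4$ such a $Y_s$ is reached exactly as in \LEM{ef}: one propagates the inequality $\Mh_i\cdot f_i\le\Mh_i\cdot g_i$ down the chain by \LEM{push} and \LEM{pull}, with $g_i$ read off from \LEM{coord}.\Mrefmclaim{b}; because $\Mrnk(N(Y_\ell))\le 3$, the inequality either survives to $Y_\ell$, or it breaks at the first $Y_s$ with $\Mk_s\cdot f_s>\Mk_s\cdot g_s$, and then $\Mk_s\cdot g_s\le -3$ forces one of the last-contracted classes $\Mp_1,\Mp_2$ to be already contracted at step~$s$, whence $\Mrnk(N(Y_s))\le 3$ and $\Mh_s\cdot f_s\le\Mh_s\cdot g_s$ still holds. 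A surface of rank at most $3$ is $\MbbP^1\times\MbbP^1$ or a one-point blowup of it, $g_s\in\{\Ml_0+\Ml_1,\ \Ml_0+\Ml_1-\Mp_1\}$; writing $\Mh_s,g_s,f_s$ in the coordinates of \LEM{coord}.\Mrefmclaim{b} — with $\gamma_0=\gamma_1$ because $\sigma_*$ swaps the rulings — the inequality $\Mh_s\cdot f_s\le\Mh_s\cdot g_s$, the nefness of the movable class $f_s$, and the positivity constraints forced by $(Y_s,\Mh_s)$ being a ruled pair give $\gamma_0+\gamma_1=(\Ml_0+\Ml_1)\cdot f_s\le 2$. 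When $\Mrnk(N(Y_0))\le 3$ one argues directly on $Y_0$ from the list in \PRP{mp}, using \LEM{lines}-style descent for the leftover linear systems.

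The step I expect to be the main obstacle is precisely this obstruction configuration, which is the phenomenon described in \RMK{intro}. When $\sigma_*$ interchanges the two rulings there is no real minimal family of canonical degree $-2$ to descend along, so \PRP{pp} cannot be invoked and $f_0$ cannot simply be pushed forward; one must instead extract the bound $(\Ml_0+\Ml_1)\cdot f_0\le 2$ from the explicit rank-$\le 3$ analysis above, showing that the candidate $(1,1)$-class — equivalently $\Ml_0+\Ml_1$ pulled back, possibly twisted by an exceptional class on the blown-up $\MbbP^1\times\MbbP^1$ — can be beaten only by a real rational covering family whose intersection with $\Ml_0+\Ml_1$ is still at most $2$. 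Carrying out this bounded case analysis, and checking that with a $\sigma_*$-invariant choice of indexing the $g_0$-comparison really does terminate at a surface of rank at most $3$, is where the genuine work lies; everything else is bookkeeping parallel to \LEM{g0}, \LEM{r4} and \LEM{ef}.
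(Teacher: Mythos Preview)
Your proposal is correct and follows essentially the same route as the paper, but with a slightly different case organisation. Both arguments pivot on the comparison class $g_0=\Ml_0+\Ml_1-\Mp_1-\Mp_2$ when $\Mrnk(N(Y_0))\ge 4$, push the inequality $\Mh_i\cdot f_i\le \Mh_i\cdot g_i$ forward via \LEM{push} and \LEM{kf} until the rank drops below~$4$, and then read off $(\Ml_0+\Ml_1)\cdot f_0\le 2$ from the invariance of the $(\Ml_0+\Ml_1)$-coefficients along the chain (your projection-formula remark is exactly what the paper extracts from \LEM{coord}).

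The one place your write-up differs in substance is the handling of the fixed-component case of $g_0$. You front-load the dichotomy on whether $\sigma_*$ swaps the rulings, and then invoke ``the argument of \LEM{g0}'' to conclude $g_0$ has no fixed part in the obstruction configuration. That appeal is not quite right: the \LEM{g0} argument in type~1 only kills the specific component $\Me_0-\Me_1-\cdots-\Me_r$, and its type~2 analogue does not by itself rule out a fixed curve of class $\Ml_j-\Mp_1-\Mp_2$. The paper's proof handles this more cleanly: it observes directly that if $h^0(\Ml_1-\Mp_1-\Mp_2)=1$, then the moving part of $g_0$ is $\Ml_0$ with $\sigma_*(\Ml_0)=\Ml_0$, so $\Ml_0\in S(Y_\ell,\Mh_\ell)$ has canonical degree~$-2$ and one lands in the second alternative via \PRP{pp}. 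In your organisation this is automatic, because a fixed component forces $\sigma_*$ to fix the rulings and hence drops you into your ``generic case''; you just need to say so rather than cite \LEM{g0}. With that adjustment your sketch matches the paper's proof.
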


\begin{proof}
If $\Mrnk(N(Y_0))< 4$, then it follows from \PRP{mp}.[3,4,5] that 
$f_0\in\{~ \Ml_0,~ \Ml_1,~ \Ml_0+\Ml_1,~ \Ml_0+\Ml_1-\Mp_1 ~\}$ so that this lemma holds.
If $\Mrnk(N(Y_\ell))\geq 4$, then
we know from \PRP{mp}.[1,2] and \PRP{pp} that
$f_\ell\in S(Y_\ell,\Mh_\ell)$ \Mst $\Mk_\ell\cdot f_\ell=-2$.

We assume that $\Mrnk(N(Y_0))\geq 4$ and $\Mrnk(N(Y_\ell))<4$
in the remainder of this proof.
We set $g_0:=\Ml_0+\Ml_1-\Mp_1-\Mp_2$ and define 
$g_i:=(\mu_{i-1}\circ\ldots\circ\mu_0)_*(g_0)$ for $0< i\leq \ell$.
We proceed similarly as in \LEM{g0} and \LEM{ef}.

If the linear series with class $g_0$ has fixed components, 
then $h^0(\Ml_j-\Mp_1-\Mp_2)=1$ where \Mwlog $j=1$.
Thus $g_0=\Ml_0+(\Ml_1-\Mp_1-\Mp_2)$ with $\sigma_*(\Ml_0)=\Ml_0$ 
\Mst $\Ml_0 \in S(Y_\ell,\Mh_\ell)$ with $\Mk_\ell\cdot \Ml_0=-2$.
It follows from \PRP{pp} that in this case $f_\ell\in S(Y_\ell,\Mh_\ell)$ \Mst $\Mk_\ell\cdot f_\ell=-2$
as asserted.
So we may assume \Mwlog that $g_0$ has no fixed components.
By the arithmetic genus formula $p_a(g_0)=\frac{1}{2}(g_0^2+\Mk_0\cdot g_0)+1=0$
and thus $g_0$ is the class of a rational family. 

Let $0< s\leq \ell$ be the minimal value so that $\Mrnk(N(Y_s))< 4$.
By assumption we have $\Mh_0\cdot f_0 \leq \Mh_0\cdot g_0$
and it follows from \LEM{kf} that $\Mk_0\cdot f_0 \leq \Mk_0\cdot g_0=-2$.
It follows from \LEM{push} that $\Mh_i\cdot f_i \leq \Mh_i\cdot g_i$
for all $0\leq i\leq s$. We already established 
that $f_s\in\{~ \Ml_0,~ \Ml_1,~ \Ml_0+\Ml_1,~ \Ml_0+\Ml_1-\Mp_1 ~\}$ 
and therefore by \LEM{coord} we find that $(\Ml_0+\Ml_1)\cdot f_0\leq 2$,
which concludes the proof of this lemma.
\end{proof}

\begin{lemma}
\textbf{(pullback of the class $\Me_0$)}
\label{lem:e0}
\\
Let $\Mrow{\mu}{(Y,\Mh)}{(Y',\Mh')}$ be an adjoint relation.
If $S(Y',\Mh')=\{\Me_0\}$, 
then for all $f\in S(Y,\Mh)$ either $f=\Me_0$ or $\Mk\cdot f=-2$. 
\end{lemma}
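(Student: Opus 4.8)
The plan is to reduce this lemma to the analysis already carried out in the previous lemmas, distinguishing whether the adjoint relation $\mu$ is of type 1 or type 2, and in the type 1 case whether $\sigma_*$ acts trivially or nontrivially on the exceptional classes contracted beneath $Y'\cong\MbbP^2$. Recall that $S(Y',\Mh')=\{\Me_0\}$ forces $Y'$ to be one of the surfaces of \PRP{mp}.5 (or, more precisely, a minimal ruled pair whose only minimal class is $\Me_0$), so $Y'$ is $\MbbP^2$ in the type 1 case. By \LEM{basis} the surface $Y$ is a blowup of $\MbbP^2$, and we write classes with respect to a type 1 basis $\Mmod{\Me_0,\Me_1,\ldots,\Me_r}$ compatible with $\mu$; the canonical class on $Y'$ is $-3\Me_0$, so $-\Mk'\cdot\Me_0 = 3$ and $\Mk'\cdot\Me_0=-3$.

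First I would handle the case where $f\in S(Y,\Mh)$ satisfies $\Mk\cdot f=-2$ directly — there is nothing to prove. So assume $\Mk\cdot f\leq -3$; by \LEM{kf} this means $\Mk\cdot f = -3$ exactly when... actually no, $\Mk\cdot f$ could be $\leq -3$ only via the non-$(-2)$ scenarios, which by \PRP{mp} and the earlier lemmas means $f$ must push forward to a class of canonical degree $\leq -3$ on $Y'$. Since $S(Y',\Mh')=\{\Me_0\}$ and $\Mk'\cdot\Me_0=-3$, the pushforward $f':=\mu_*f$ is forced to be $\Me_0$ — here I would invoke \LEM{push} ($f'$ is a rational family), the fact that $\Me_0$ is the unique minimal class on $Y'$, and a degree comparison: if $f'\neq\Me_0$ then $\Mh'\cdot f'>\Mh'\cdot\Me_0$, and combined with $\Mh\cdot f=\Mh'\cdot f'-\Mk\cdot f$ (\LEM{push}) and $\Mk\cdot f\leq -3$ one gets $\Mh\cdot f$ strictly larger than the degree of the pullback of $\Me_0$ (which by \LEM{pull} equals $\Mh'\cdot\Me_0 + 3$), contradicting minimality of $f$. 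So $\mu_* f = \Me_0$, hence $f = \Me_0 - D$ where $D\geq 0$ is supported on contracted exceptional classes with $\Mk\cdot D = \Mk\cdot f + 3 = 0$ when $\Mk\cdot f = -3$; but a nonzero effective combination of exceptional $(-1)$-classes $\Me_i$ has $\Mk\cdot(\sum m_i\Me_i) = \sum m_i > 0$, forcing $D=0$ and $f=\Me_0$. The only remaining subtlety is whether $\Mk\cdot f$ can be strictly below $-3$: that is excluded since $f=\Me_0-D$ with $D$ effective exceptional gives $\Mk\cdot f = -3 + \sum m_i \geq -3$.

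The type 2 case (and the mixed situation where the chain containing $\mu$ is of type 2 but $Y'$ is nonetheless a blowup with $S(Y',\Mh')=\{\Me_0\}$) needs the basis-change \EQN{basis}; here I would argue that $S(Y',\Mh')=\{\Me_0\}$ together with \PRP{mp} still pins down the pushforward, and the same $\Mk$-degree bookkeeping applies verbatim once one notes $\Mk\cdot(\text{exceptional } \Mp_i\text{-type class}) = 1 > 0$. **The main obstacle** I anticipate is making the degree inequality in the second paragraph tight enough: one must be careful that "$\Mh\cdot f$ bigger than the degree of the pullback of $\Me_0$" genuinely contradicts the minimality of $f$, which requires exhibiting $\mu^*\Me_0$ (or rather the class of lines pulled back through the contracted points, which may itself need to be modified to $\Me_0-\Me_j$ to be $\sigma_*$-invariant, cf. \LEM{lines}) as an actual competing rational covering family on $Y$ of that degree — so the proof should split on whether $\sigma_*\Me_0 = \Me_0$ carries enough structure, invoking \LEM{lines} to produce the competitor $\Me_0-\Me_j$ with $\sigma_*(\Me_j)=\Me_j$ when such a $j$ exists, and otherwise observing that $f=\Me_0$ is already $\sigma_*$-invariant and the bound is immediate.
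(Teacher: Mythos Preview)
Your core argument is correct and matches the paper's: push $f$ forward to $f'=\mu_*f$, use \LEM{push} and \LEM{kf} together with the minimality of $\Me_0$ on $Y'$ and of $f$ on $Y$ to force $\Mk\cdot f\in\{-2,-3\}$, and in the $-3$ case conclude $f'=\Me_0$, hence $\Me_0\cdot f=1$, hence $f=\Me_0$. The type~1/type~2 case split, the appeal to \LEM{lines}, and the ``main obstacle'' paragraph are all unnecessary: $\Me_0$ itself is already a real rational covering family on $Y$ (since $\sigma_*\Me_0=\Me_0$ in any type~1 basis) and serves directly as the competitor, so no auxiliary class $\Me_0-\Me_j$ is needed.
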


\begin{proof}
Suppose that $f\in S(Y,\Mh)$ and $f'=\mu_*f$.
It follows from \LEM{push} that 
$f'$ is the class of a rational family \Mst
$\Mh'\cdot f'=\Mh\cdot f+\Mk\cdot f$.
We know from \LEM{kf} that $\Mk\cdot f\leq -2$.
Notice that $\Mk\cdot f\geq -3$, otherwise $f'\in S(Y',\Mh')$ and $f'\neq \Me_0$, 
contradicting the assumption.   
If $\Mk\cdot f=-3$, then $\Me_0\cdot f=1$ otherwise $f'\in S(Y',\Mh')$ and $f'\neq \Me_0$.
This concludes the proof, since if $\Mk\cdot f=-3$ and $\Me_0\cdot f=1$,
then $f=\Me_0$.
\end{proof}

\begin{theorem}
\textbf{(classification of real minimal families)}
\label{thm:cls}
\\
Suppose that an algebraic surface $X\subset \MbbP^n$ contains a rational curve through a general point.
Let $(Y_0,\Mh_0)$ be its associated ruled pair with adjoint chain
\[
\Marrow{(Y_0,\Mh_0)}{\mu_0}{(Y_1,\Mh_1)}\Marrow{}{\mu_1}{}\ldots\Marrow{}{\mu_{\ell-1}}{(Y_\ell,\Mh_\ell)},
\]
and let $f_0\in S(Y_0,\Mh_0)$ be the class of a real minimal family.
If $X$ is either $\MbbR$-rational or $\Mk_\ell\cdot z_\ell=-2$ for some $z_\ell\in S(Y_\ell,\Mh_\ell)$,
then one of the following cases holds: 
\begin{enumerate}[topsep=0pt, label=(\roman*)]
\item 
$f_0$ is the pullback of $f_\ell\in S(Y_\ell,\Mh_\ell)$ as specified in \PRP{mp} 
with the additional property that $\Mk_\ell\cdot f_\ell=-2$.

\item
the adjoint chain is of type 1 so that, up to permutation of $(\Me_j)_{j>0}$,
\[
f_0\in\{~\Me_0,~\Me_0-\Me_1,~2\Me_0-\Me_1-\Me_2-\Me_3-\Me_4~\}.
\]

\item
the adjoint chain is of type 2 so that, up to permutation of $(\Mp_j)_{j>0}$, 
\[
f_0\in\{~\Ml_0+\Ml_1,~\Ml_0+\Ml_1-\Mp_1,~ \Ml_0+\Ml_1-\Mp_1-\Mp_2 ~\}.
\]

\end{enumerate}
\end{theorem}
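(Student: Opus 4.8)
The plan is to split according to whether the terminal minimal ruled pair $(Y_\ell,\Mh_\ell)$ of the adjoint chain carries a minimal family of canonical degree $-2$. Suppose first that some $z_\ell\in S(Y_\ell,\Mh_\ell)$ satisfies $\Mk_\ell\cdot z_\ell=-2$. Pulling $z_\ell$ back one step at a time, \PRP{pp}.\Mrefmclaim{a} produces a canonical-degree $-2$ minimal family on each $(Y_i,\Mh_i)$; feeding these into \PRP{pp}.\Mrefmclaim{b} while pushing $f_0$ forward shows by induction that $f_i:=(\mu_{i-1}\circ\cdots\circ\mu_0)_*f_0$ lies in $S(Y_i,\Mh_i)$ with $\Mk_i\cdot f_i=-2$ for all $i$, so in particular $f_\ell\in S(Y_\ell,\Mh_\ell)$ with $\Mk_\ell\cdot f_\ell=-2$, and $f_\ell$ is then one of the classes listed in \PRP{mp}. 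To reach case (i) it remains to check $f_0=(\mu_{\ell-1}\circ\cdots\circ\mu_0)^*f_\ell$, which I do one step at a time: $f_j$ and $\mu_j^*f_{j+1}$ are both classes of rational families of canonical degree $-2$ (\LEM{pull}) with the same pushforward $f_{j+1}$, so their difference is a combination $\sum a_iE_i$ of the contracted exceptional curves, which are pairwise disjoint by \PRP{chain}.\Mrefmclaim{a}; pairing with $\Mk_j$ forces $\sum a_i=0$, and pairing with $E_i$ forces $a_i=f_j\cdot E_i\le 0$ (using that $f_j$ is nef, being a covering family, and that $\mu_j^*f_{j+1}\cdot E_i=0$), whence every $a_i=0$.

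Now suppose no such $z_\ell$ exists. Then the hypothesis of \THM{cls} forces $X$, hence $Y_0$, to be $\MbbR$-rational, so \LEM{basis} applies; case \LEM{basis}.4 cannot hold because $Y_0$ is $\MbbR$-rational, and case \LEM{basis}.3 cannot hold because it exhibits a canonical-degree $-2$ minimal family, so the chain is of type 1 or of type 2. Inspecting \PRP{mp} and its table, a minimal ruled pair has no canonical-degree $-2$ minimal family precisely when $S(Y_\ell,\Mh_\ell)=\{\Me_0\}$ (type 1: the weak del Pezzo surfaces of degree $7$ or $9$) or $S(Y_\ell,\Mh_\ell)\subset\{\Ml_0+\Ml_1,\ \Ml_0+\Ml_1-\Mp_1\}$ (type 2: the sphere-like weak del Pezzo surfaces of degree $8$ or $7$); in each case the distinguished class has canonical degree $-3$ or $-4$, never $-2$.

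If the chain is of type 1 and $\Mrnk N(Y_0)=1$, then $Y_0\cong\MbbP^2$ and $f_0=\Me_0$. Otherwise I set up \ASM{1} and observe that $\Me_0$ is a real rational covering family by \LEM{basis}, so $\Mh_0\cdot f_0\le\Mh_0\cdot\Me_0$, and if moreover $\Mh_0\cdot g_0<\Mh_0\cdot\Me_0$ then $g_0$ is also a (covering) rational family by \LEM{g0}; in either case $\Mh_0\cdot f_0\le\Mh_0\cdot g_0$, so \LEM{ef} applies, and its second alternative — which would give $f_\ell\in S(Y_\ell,\Mh_\ell)$ with $\Mk_\ell\cdot f_\ell=-2$ — is impossible here, leaving $\Me_0\cdot f_0\le 2$. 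Writing $f_0=\beta_0\Me_0-\sum_{j>0}\beta_j\Me_j$ as in \LEM{coord}, positivity of the degree gives $\beta_0\ge 1$ and nefness of $f_0$ gives $\beta_0^2-\sum_{j>0}\beta_j^2=f_0^2\ge 0$; for $\beta_0=1$ this leaves $f_0\in\{\Me_0\}\cup\{\Me_0-\Me_j\}$, and for $\beta_0=2$ it leaves the finitely many options with $\sum_{j>0}\beta_j^2\le 4$. Of these, every one except $2\Me_0-\Me_1-\Me_2-\Me_3-\Me_4$ (up to permutation of $(\Me_j)_{j>0}$) is excluded: a class with a coefficient $\beta_j=2$ or with no base points is a multiple of a lower-degree covering family, and a class $2\Me_0-\Me_{j_1}-\cdots-\Me_{j_m}$ with $1\le m\le 3$ is beaten, as a minimal family, by the line $\Me_0-\Me_{j_1}$ through a $\sigma_*$-fixed base point $j_1$ (which exists because $\sigma_*$ is an involution on $\{j_1,\dots,j_m\}$, $m$ being odd) or, when $m=2$ and $\sigma_*$ swaps the pair, simply by $\Me_0$. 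So $f_0$ is as in case (ii).

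If instead the chain is of type 2, I run the parallel argument with \LEM{type2} replacing \LEM{g0} and \LEM{ef}: its second alternative is again impossible ($\Mk_\ell\cdot(\Ml_0+\Ml_1)=-4$ and $\Mk_\ell\cdot(\Ml_0+\Ml_1-\Mp_1)=-3$), so $(\Ml_0+\Ml_1)\cdot f_0\le 2$; writing $f_0=\gamma_0\Ml_0+\gamma_1\Ml_1-\sum_{j>0}\beta_j\Mp_j$ as in \LEM{coord}, the relation $\sigma_*(\Ml_0)=\Ml_1$ forces $\gamma_0=\gamma_1$, which with $\gamma_0+\gamma_1\le 2$ and positivity of the degree forces $\gamma_0=\gamma_1=1$, and then $2-\sum_{j>0}\beta_j^2=f_0^2\ge 0$ leaves $f_0\in\{\Ml_0+\Ml_1\}\cup\{\Ml_0+\Ml_1-\Mp_j\}\cup\{\Ml_0+\Ml_1-\Mp_{j_1}-\Mp_{j_2}\}$, i.e.\ case (iii) up to permutation of $(\Mp_j)_{j>0}$. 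The routine parts are the lattice bookkeeping underlying \LEM{coord} and \ASM{1}; the main obstacle is the $\beta_0=2$ branch of the type 1 analysis, anticipated in \RMK{intro}, where one must, for each configuration of the real structure on the base points of $f_0$, exhibit a genuine real rational \emph{covering} family of \emph{strictly} smaller degree, the strictness of the degree inequality being exactly where the Hodge index theorem and the ruled-pair requirement that no exceptional curve be orthogonal to $\Mh_0$ are used.
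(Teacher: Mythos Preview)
Your proof is essentially correct and follows the same architecture as the paper: split on whether $(Y_\ell,\Mh_\ell)$ carries a canonical-degree $-2$ minimal family, handle the affirmative case via \PRP{pp}, and in the negative case use \LEM{g0}/\LEM{ef} for type~1 and \LEM{type2} for type~2. Two points of comparison are worth making.

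First, in case~(i) you supply an explicit argument that $f_0$ equals the pullback of $f_\ell$, which the paper leaves implicit in the phrase ``by \PRP{pp}''. Your argument is correct, but note the sign slip: from $f_j-\mu_j^*f_{j+1}=\sum a_iE_i$ and $E_i\cdot E_{i'}=-\delta_{ii'}$ one gets $f_j\cdot E_i=-a_i$, not $a_i$; nefness then gives $a_i\le 0$ as you want.

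Second, in the type~1 case the paper first invokes \LEM{e0} (which iterates along the chain by combining it with \PRP{pp}) to reduce to $f_0=\Me_0$ or $\Mk_0\cdot f_0=-2$; combined with $\Me_0\cdot f_0\le 2$ from \LEM{ef}, the list in~(ii) is then immediate. You bypass \LEM{e0} and instead enumerate all nef classes with $\beta_0\le 2$, excluding the extras ($2\Me_0$, $2\Me_0-\Me_j$, $2\Me_0-\Me_j-\Me_k$, $2\Me_0-\Me_{j_1}-\Me_{j_2}-\Me_{j_3}$) by exhibiting a real rational family of strictly smaller degree. This works, but the strict inequality you need, e.g.\ $\alpha_0>\alpha_j+\alpha_k$, requires a short argument you only allude to in your last sentence: the class $\Me_0-\Me_j-\Me_k$ is effective (Riemann--Roch), and if it were orthogonal to $\Mh_0$ then every component $C$ would satisfy $\Mh_0\cdot C=0$, hence $C^2<0$ by Hodge; since $\Mk_0\cdot(\Me_0-\Me_j-\Me_k)=-1$, not all components can have $\Mk_0\cdot C\ge 0$, so some component is a $(-1)$-curve orthogonal to $\Mh_0$, contradicting the ruled-pair axiom. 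The paper's route via \LEM{e0} avoids this case-work entirely. Similarly, in type~2 you tacitly use $\sigma_*(\Ml_0)=\Ml_1$; this is forced because otherwise $\Ml_0$ would be a real rational family on $Y_\ell$ of strictly smaller $\Mh_\ell$-degree than $\Ml_0+\Ml_1$, contradicting $S(Y_\ell,\Mh_\ell)=\{\Ml_0+\Ml_1\}$.
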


\begin{proof} 
If $\Mk_\ell\cdot z_\ell=-2$ for some $z_\ell\in S(Y_\ell,\Mh_\ell)$,
then we are in case (i) by \PRP{pp}.
In the remainder of the proof we assume that $X$ is $\MbbR$-rational.
We make a case distinction on $S(Y_\ell,h_\ell)$ in \PRP{mp}.

Suppose that we are in case (1), (2), (6) or (7) of \PRP{mp}.
By \LEM{psi0}, there exists $z_\ell\in S(Y_\ell,\Mh_\ell)$ \Mst $\Mk_\ell\cdot z_\ell=-2$
and thus we are in case (i) by \PRP{pp}.

Now suppose that $S(Y_\ell,\Mh_\ell)=\{\Me_0\}$ in case (3) or (5) of \PRP{mp}.
By \LEM{basis} the adjoint chain is of type 1.
It follows from \LEM{e0} that either $f_0=\Me_0$ as in case (ii) or $\Mk_0\cdot f_0=-2$.
Now suppose that $\Mk_0\cdot f_0=-2$ so that $\Mh_0\cdot f_0\leq\Mh_0\cdot \Me_0$
and let $g_0$ be defined as in \ASM{1}.
We have $\Mh_0\cdot f_0 \leq \Mh_0\cdot g_0$, 
otherwise $\Mh_0\cdot g_0 < \Mh_0\cdot\Me_0$ so that by \LEM{g0} 
there exists a family with class $g_0$
that is of lower degree than the family with class $f_0$.
Thus it follows from \LEM{ef} that $\Me_0\cdot f_0\leq 2$.
Since $\Mk_0\cdot f_0=-2$ and $\Me_0\cdot f_0\leq 2$,
we conclude that we are again in case (ii).

Finally, suppose that $S(Y_\ell,\Mh_\ell)=\{\Ml_0+\Ml_1\}$ is as in case (4) of \PRP{mp}.
By \LEM{basis} the adjoint chain must be of type 2. 
It follows from \LEM{type2} that we are in 
case (iii) and this concludes the proof of the theorem. 
\end{proof}

\begin{proof}[Proof of \COR{cls}]
~\\
\Mrefmclaim{b} Direct consequence of Riemann-Roch theorem
and Serre duality applied to the classes listed in \THM{cls}.

\Mrefmclaim{a}
Let $F\subset X\times B$ denote the minimal family of dimension of at least 3.
It follows from \THM{cls} and \Mrefmclaim{b} that $[F]=\Ml_0+\Ml_2$ with $\sigma_*(\Ml_0)=\Ml_1$.
Thus the linear normalization $Y_0$ of $X$ is isomorphic to $\MbbP^1\times\MbbP^1$, 
where the real structure $\Mrow{\sigma}{Y_0}{Y_0}$ flips the $\MbbP^1$-factors. 
The class of hyperplane sections is $\Mh_0=\alpha\Ml_0+\alpha\Ml_1 \in N(X)$ for some $\alpha\in\MbbZ_{>0}$.
A projective sphere has pair $(\MbbP^1\times\MbbP^1, \Ml_0+\Ml_1)$
and thus $Y_0$ and the sphere are isomorphic.

\Mrefmclaim{c}
It follows from \RMK{complex}, that
a minimal family over the complex 
numbers is either specified by case (i) of \THM{cls},
has class $\Me_0$ or is the pullback of lines through a point 
as characterized by \LEM{lines} with $\sigma_*$ the identity. 
We can conclude the proof, since
minimal families in all the remaining cases of \THM{cls} are
complete.
\end{proof}

The following corollary extends the
classification of multiple conical surfaces in \citep[Theorem~8 and Theorem~10]{sch6},
while also incorporating the real structure.

\begin{corollary}
\textbf{(classification real families of conics)}
\label{cor:conic}
\\
Suppose that $X\subset\MbbP^n$ is a real algebraic surface with ruled pair $(Y,\Mh)$.
If $X$ contains $\lambda\geq 1$ conics through
a general point, then either one of the following holds:
\begin{enumerate}[topsep=0pt, itemsep=0pt]

\item The pair $(Y,\Mh)$ is a minimal ruled pair and
either $X$ is geometrically ruled with $\deg X\leq n-1\leq 3$, or
the conics form minimal families with classes characterized by \PRP{mp}.[2,3,4,5].

\item $\lambda=1$ and there exists an adjoint relation 
$\Mrow{\mu}{(Y,\Mh)}{(Y',\Mh')}$ \Mst $(Y',\Mh')$ is a minimal ruled pair.
The conics form a minimal family whose class is
the pullback along $\mu$ of the class in \PRP{mp}.6.

\end{enumerate}
\end{corollary}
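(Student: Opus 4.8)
The plan is to decide whether the family of conics through a general point is itself a minimal family and, accordingly, to apply \THM{cls} either to this family or to the family of lines that undercuts it. Since $X$ contains a conic through a general point it is birationally ruled, with ruled pair $(Y,\Mh)=(Y_0,\Mh_0)$ (so $\Mh$ is nef and big), and the conics through a general point form a rational covering family $F$ with $\Mh\cdot[F]=2$. Hence $\theta:=\min\Mset{\Mh\cdot[F']}{F'\text{ a rational covering family of }X}\in\{1,2\}$, and I would treat the two values separately.

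Suppose $\theta=2$, so $F$ is minimal and \THM{cls} applies to $f_0:=[F]\in S(Y_0,\Mh_0)$. In case (i) of \THM{cls} we have $f_0=(\mu_{\ell-1}\circ\ldots\circ\mu_0)^{*}f_\ell$ with $f_\ell\in S(Y_\ell,\Mh_\ell)$ and $\Mk_\ell\cdot f_\ell=-2$, so iterating \PRP{pp}.\Mrefmclaim{a} gives $\Mh_0\cdot f_0=(\Mh_\ell\cdot f_\ell)+2\ell=2$. If $(Y_\ell,\Mh_\ell)$ is one of the minimal ruled pairs of \PRP{mp} with $\Mh_\ell$ big (cases 1--5 and 7), then the Hodge index theorem applied to the nef big class $\Mh_\ell$ and the nonzero movable class $f_\ell$ forces $\Mh_\ell\cdot f_\ell\geq1$; hence $\ell=0$, so $(Y,\Mh)$ is itself a minimal ruled pair and $f_0$ has $\Mh\cdot f_0=2$ and $\Mk\cdot f_0=-2$. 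Comparing with \PRP{mp}, such a class occurs only in cases \PRP{mp}.[2,3,4,5] --- case \PRP{mp}.1 is excluded since there $\varphi_{\Mh}=\varphi_{-\Mk}$ does not embed $Y$ as a surface, and in \PRP{mp}.7 the minimal family is the line-ruling, of degree $1$ --- which is case (1) of \COR{conic}. If instead $(Y_\ell,\Mh_\ell)$ is the $\MbbP^1$-bundle of \PRP{mp}.6, where $|\Mh_\ell|$ contracts the fibre $f_\ell$, then $\Mh_\ell\cdot f_\ell=0$ and $\ell=1$; then $\mu_0$ realizes $X$ as a conic bundle with the members of $F$ as fibres, so exactly one conic passes through a general point, $\lambda=1$, and by \LEM{pull} the class $[F]=\mu_0^{*}f_\ell$ is the pull-back of the class of \PRP{mp}.6 --- case (2) of \COR{conic}. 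Finally, in cases (ii) and (iii) of \THM{cls} the class $f_0$ lies in an explicit short list with $\Mk_0\cdot f_0\leq-2$; then \LEM{push} gives $\Mh_1\cdot\mu_{0*}f_0=\Mh_0\cdot f_0+\Mk_0\cdot f_0\leq0$, which, since $\mu_{0*}f_0$ is a nonzero effective movable class and $\Mh_1$ is nef and big, is only possible for $\ell=0$; so $(Y,\Mh)$ is again minimal and $f_0$ is one of the degree-$2$ classes of \PRP{mp}.[2,3,4,5], i.e.\ case (1).

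Suppose $\theta=1$, so $X$ carries a rational covering family $z_0$ of lines. The same degree bookkeeping applied to $z_0$ (with $\Mh_0\cdot z_0=1$; the parity relation $2\ell=1$ now excludes the degenerate-bundle possibility) forces its adjoint chain to be trivial, so $(Y,\Mh)$ is a minimal ruled pair carrying a family of lines. By \PRP{mp} the surface $X=\varphi_{\Mh}(Y)$ is then $\MbbP^2$, a (possibly singular) geometrically ruled surface with $\deg X\leq n-1$, or a del Pezzo surface as in \PRP{mp}.4 with $\alpha=1$; in each of these $(Y,\Mh)$ is minimal, and the conics are read off directly --- on a geometrically ruled surface from the ruling together with a minimal section --- which is case (1); if $X$ admits an adjoint relation onto such a minimal ruled pair (necessarily the situation of \PRP{mp}.6) the conics are the pull-back of that class and $\lambda=1$, giving case (2). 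In all cases the stated invariants (completeness, canonical degree, dimension, and the value of $\lambda$) follow from the corresponding rows of the table in \PRP{mp} together with Riemann--Roch and Serre duality.

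The main obstacle is not a single deep step but the control of the adjoint chain together with the matching against \PRP{mp}: the Hodge-index inequality $\Mh_\ell\cdot f_\ell\geq1$ for non-degenerate minimal ruled pairs, combined with the parity relation $2\ell=\Mh_0\cdot f_0-\Mh_\ell\cdot f_\ell$, is what confines $\ell$ to $\{0,1\}$, after which one must carefully separate, among the degree-$2$ classes of \PRP{mp}, those giving genuine families of conics (in the two regimes $\theta=1$ and $\theta=2$) and track the number $\lambda$ of conics through a general point.
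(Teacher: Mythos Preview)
Your $\theta=2$ analysis is broadly sound and, while it routes through \THM{cls} rather than working directly with \LEM{push} and \LEM{kf} as the paper does, it reaches the same conclusions. (One caveat: \THM{cls} carries the hypothesis that $X$ is $\MbbR$-rational or that some $z_\ell\in S(Y_\ell,\Mh_\ell)$ has $\Mk_\ell\cdot z_\ell=-2$; you never verify this, though the paper's own use of \PRP{mp} has the same implicit assumption.)

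The genuine gap is in your $\theta=1$ paragraph. You correctly force $\ell=0$, so $(Y,\Mh)$ is a minimal ruled pair carrying a line ruling, and by \PRP{mp} this lands you in case~7 (a Hirzebruch surface with $2\Mh+\Mk$ proportional to the fibre class). But the content of the corollary in this situation is the bound $\deg X\leq n-1\leq 3$, and you simply assert it (``a geometrically ruled surface with $\deg X\leq n-1$'') without argument, and never address the $\leq 3$ at all. This is where the paper does actual work: writing $N(Y_\ell)=\langle t,f\rangle$ with $t^2=r$, $f^2=0$, $t\cdot f=1$, $2\Mh_\ell+\Mk_\ell=af$, one solves $\Mh_\ell\cdot c=2$ for an irreducible movable class $c=\alpha t+\beta f$ subject to $c\cdot f\geq 0$ and $c\cdot(t-rf)\geq 0$, obtaining $c=t$ and $(r,a)\in\{(1,1),(0,2)\}$. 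Only the case $(r,a)=(1,1)$ survives (it gives the cubic scroll with linear normalization in $\MbbP^4$, hence $n-1\leq 3$); the case $(r,a)=(0,2)$ is eliminated separately. None of this follows from ``the conics are read off directly from the ruling together with a minimal section,'' and your closing clause about an adjoint relation onto \PRP{mp}.6 contradicts the $\ell=0$ you just established. You need to carry out this Hirzebruch computation, or something equivalent, to close the argument.
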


\begin{proof}
Suppose that $(Y_\ell,\Mh_\ell)$ is the minimal ruled pair of $X$.
It follows from \LEM{push} and \LEM{kf} that 
the degrees of minimal families on adjoint surfaces differ by at least two and
thus
$0\leq \Mh_\ell\cdot f_\ell\leq 2$ for $f_\ell\in S(Y_\ell,\Mh_\ell)$.
If $\Mh_\ell\cdot f_\ell= 0$, then we are in case (2) of the corollary by \PRP{pp}. 
If $\Mh_\ell\cdot f_\ell>0$, then \LEM{push} and \LEM{kf} ensure that $\ell=0$ so that 
$(Y,\Mh)$ is a minimal ruled pair.
\PRP{mp}.1 cannot hold, since the map associated to $\Mh_\ell=-\Mk_\ell$ is not birational in this case,
by \LEM{psi2a} and \LEM{psi4}.
Thus if $\Mh_\ell\cdot f_\ell=2$, then
the classes of conics on $X$ are classes of minimal families as 
characterized by \PRP{mp}.[2,3,4,5].

In the remainder of the proof we suppose that $\Mh_\ell\cdot f_\ell=1$ and $\deg X>2$.  
Thus $X$ is covered by both lines and conics so that the arithmetic genus of $X$ is zero.
It follows from \PRP{mp}.7, \citep[Proposition~2.c]{nls-f6} and \citep[Proposition~IV.18]{bea1} 
that $Y_\ell$ is a Hirzebruch surface \Mst $N(Y_\ell)=\Mmod{t,f}$ with 
$t^2=:r \geq 0$, 
$f^2=0$, 
$t\cdot f=1$, 
$\sigma_*=$ id,
$\Mk_\ell=-2t+(r-2)f$,
$h^0(t-rf)>0$
and
$2\Mh_\ell+\Mk_\ell=a f$ for some $a>0$. 
Suppose that $c:=\alpha t+ \beta f$ is the class of a conic for some $\alpha,\beta\in\MbbZ$.
It follows from $\Mh_\ell\cdot c=2$, $c\cdot f=\alpha$ and $c\cdot (t-rf)=\beta$ 
that $\alpha\geq 0$ and $\beta=2-\frac{1}{2}\alpha(r+a+2)\geq 0$.
Since $c\neq \beta f$ we find that $c=t$ and $(r,a)\in\{(1,1),(0,2)\}$.
If $(r,a)=(1,1)$, then $\Mh_\ell=t+f$, $h^0(\Mh_\ell)=5$ and $h^0(c)=3$ by Riemann-Roch theorem 
and Kodaira vanishing theorem so that $X$
is a cubic geometrically ruled surface whose linear normalization is in $\MbbP^4$.
If $(r,a)=(0,2)$, then $\Mh_\ell=t+2f$ with $h^0(c)=2$ by Riemann-Roch theorem and Kodaira vanishing theorem, 
contradicting that $X$ contains two conics through a general point.

We considered all possible values of $\Mh_\ell\cdot f_\ell$ and therefore concluded the proof.
\end{proof}

\section{Computing minimal families}
\label{sec:alg}

In this section we reformulate the results of \SEC{min} into an 
algorithmic form.
The following diagram depicts a birational map 
$\Mdashrow{\McalH}{\MbbP^2}{X\subset\MbbP^n}$ 
and the resolution of its baselocus
\begin{equation}
\label{eqn:res}
\begin{array}{r@{}c@{}l}
        &           Z                  &     \\
        &\tau_1\swarrow~~\searrow\tau_2 &     \\
\MbbP^2 & \Mdasharrow{}{\McalH}{}      & X          
\end{array}
\end{equation}
If $Z$ is not biregular to the smooth model~$Y$ of~$X$, then it is more 
convenient to compute with $N(Z)$ instead of $N(X)$.
We denote by $\Mhh,\Mkk\in N(Z)$ the class of hyperplane sections and 
the canonical class \Mresp.
Let 
\[
S_*(Z,\Mhh)=\Mset{ f\in S(Z,\Mhh) }{ f\cdot e=0 \text{ for all } e\in\McalE},
\]
where $\McalE=\Mset{ e\in N(Z)}{ e^2=\Mkk\cdot e=-1,~\Mhh\cdot e=0 }$.
Notice that $N(X)\cong N(Z)/\McalE$.

\newpage
\begin{example}
\textbf{(sphere)}
\label{exm:sphere}
\\
Let $X:=\Mset{ x\in \MbbP^3}{ -x_0^2+x_1^2+x_2^2+x_3^2=0 }$ be the projective 
closure of the unit-sphere. 
The projection of $X$ with center $(1:0:0:1)$ is called
a \Mdef{stereographic projection} and defined as
\[
\Mdasharrow{X}{}{\MbbP^2},~ x \mapsto (x_0-x_3:x_1:x_2).
\] 
The inverse of this stereographic projection defines a parametrization of $X$:
\[
\Mdashrow{\McalH}{\MbbP^2}{X},~ x\mapsto (x_0^2+x_1^2+x_2^2+x_3^2:2x_0x_1:2x_0x_2:-x_0^2+x_1^2+x_2^2+x_3^2).  
\]
We find with \citep[Algorithm~1]{nls-bp} that the base locus of $\McalH$
consists of two points $(\pm\Mi:1:0)$ in $\MbbP^2$. We denote the blowup of
$\MbbP^2$ in these two points by $\Mrow{\tau_1}{Z}{\MbbP^2}$
so that $N(Z)=\Mmod{\Me_0,\Me_1,\Me_2}$, 
$\sigma_*(\Me_0)=\Me_0$, 
$\sigma_*(\Me_1)=\Me_2$,
$\Mhh=2\Me_0-\Me_1-\Me_2$ and $\Mkk=-3\Me_0+\Me_1+\Me_2$.
Notice that $Z$ is 
not the smooth model of $X$, since the class in $\McalE=\{~\Me_0-\Me_1-\Me_2~\}$ is orthogonal to $\Mhh$.
The corresponding line through the basepoints 
is an exceptional curve and contracted by $\Mrow{\tau_2}{Z}{X}$ to a smooth point on $X\cong\MbbP^1\times\MbbP^1$.
The classes of the families of complex lines in $X$ are $\Me_0-\Me_1$ and $\Me_0-\Me_2$.
The class of the family of conics through the center of projection $(1:0:0:1)$ is $\Me_0$.
The class of the family of all conics on $X$ is the class of hyperplane sections 
$2\Me_0-\Me_1-\Me_2$. 
A minimal family must have conics as members and thus $S_*(Z,\Mhh)=\{~2\Me_0-\Me_1-\Me_2~\}$.
\Mend
\end{example}

If $c\in N(X)$ is a class, then $c=\MmfM(c)+\MmfF(c)$ is its decomposition into
its 
\Mdef{moving component} $\MmfM(c)$ and 
\Mdef{fixed component} $\MmfF(c)$
so that $h^0(\MmfM(c))>1$, $h^0(\MmfF(c))=1$ and $\MmfF(\MmfM(c))=0$.

If $h^0(\Mhh+\Mkk)>1$, then a \Mdef{pseudo adjoint relation} is defined as
\[
\Mrow{\lambda}{(Z,\Mhh)}{(Z',\Mhh'):=\bigl(~\lambda(Z),~ \MmfM\bigl(\lambda_*(\Mhh+\Mkk)\bigr)  ~\bigr)},
\]
where
$\Mrow{\lambda}{Z}{Z'}$ contracts exceptional curves $E\subset Z$ with the property that $(\Mhh+\Mkk)\cdot [E]= 0$
and $[E]\cdot[C]=0$ for all exceptional curves $C$ \Mst $\Mhh\cdot [C]=0$.
Following \SEC{chain} we obtain a \Mdef{pseudo adjoint chain}
\[
\Marrow{(Z_0,\Mhh_0)}{\lambda_0}{(Z_1,\Mhh_1)}\Marrow{}{\lambda_1}{}\ldots\Marrow{}{\lambda_{\ell-1}}{(Z_\ell,\Mhh_\ell)}.
\]
We call $(Z_\ell,\Mhh_\ell)$ a \Mdef{pseudo minimal ruled pair}.

The following lemma relates the pseudo adjoint chain to the adjoint chain.

\begin{lemma}
\textbf{(pseudo adjoint relation and minimal families)}
\label{lem:par}
\\
Suppose that $\Mrow{\lambda_0}{(Z_0,\Mhh_0)}{(Z_1,\Mhh_1)}$ is a psuedo adjoint relation.
We consider the following commutative diagram
\begin{equation}
\label{eqn:diagram}
\begin{array}{ccc}
(Z_0,\Mhh_0)       & \Marrow{}{\lambda_0}{} & (Z_1,\Mhh_1)       \\
\downarrow\gamma_0 &                        &  \downarrow\gamma_1\\
(Y_0,\Mh_0)        & \Marrow{}{\mu_0}{}     & (Y_1,\Mh_1)        \\
\end{array}
\end{equation}
where
$\mu_0$ is an adjoint relation, $\Mh_0=\gamma_{0*}\Mhh_0$
and
$\Mrow{\gamma_0}{Z_0}{Y_0}$ contracts exceptional curves $C\subset Z_0$ \Mst $\Mhh_0\cdot [C]= 0$
and $\gamma_1=\mu_0\circ\gamma_0$.

Moreover, we assume that $N(Z_0)$ has a type 1 basis \Mst 
$\Me_0$ is the pullback of lines in $\MbbP^2$
and $(\Me_i)_{i>0}$ are the pullback of exceptional curves, via a birational morphism $\Marrow{Z_0}{}{\MbbP^2}$.
\begin{itemize}[topsep=0mm, itemsep=0mm]

\Mmclaim{a}
We have $\Mhh_0=\gamma_0^*\Mh_1$,  $\Mhh_1=\gamma_1^*\Mh_1$ and
$\Mrow{\gamma_1}{Z_1}{Y_1}$ contracts exactly exceptional curves $C'\subset Z_1$ \Mst $\Mhh_1\cdot [C']= 0$.

Moreover, $N(Z_1)$ has a type 1 basis \Mst 
$\Me_0$ is the pullback of lines in $\MbbP^2$
and $(\Me_i)_{i>0}$ are the pullback of exceptional curves, via a birational morphism $\Marrow{Z_1}{}{\MbbP^2}$.

There is a natural inclusion $\Mhookrow{\iota}{N(Z_1)}{N(Z_0)}$ 
that preserves the generators of the type 1 basis.

\Mmclaim{b}
We have
$S_*(Z_i,\Mhh_i)=\Mset{\gamma_i^*f}{f\in S(Y_i,\Mh_i)}$ for $0\leq i\leq 1$.

\Mmclaim{c}
If there exists $f\in S_*(Z_1,\Mhh_1)$ \Mst $\Mkk_1\cdot f=-2$, then
\[
S_*(Z_0,\Mhh_0)=\Mset{ \lambda_0^*f }{f\in S_*(Z_1,\Mhh_1)\text{ and }\Mkk_1\cdot f=-2 }. 
\]

\Mmclaim{d}
If $f\in S(Y_1,\Mh_1)\cap\Psi_j$, then $\gamma_0^*f \in S_*(Z_1,\Mhh_1)\cap\Psi_j$, for all $j\in\{0,1,2,4\}$. 

\end{itemize}
\end{lemma}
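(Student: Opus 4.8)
The plan is to prove part \Mrefmclaim{a} first — it carries essentially all of the geometric content — and then to read off \Mrefmclaim{b}, \Mrefmclaim{c} and \Mrefmclaim{d} from it by formal manipulations. For \Mrefmclaim{a} I would begin with the identity $\Mhh_0=\gamma_0^*\Mh_0$: the class $\Mhh_0$ is nef and big and, by construction of $\gamma_0$, orthogonal to exactly the exceptional curves that $\gamma_0$ contracts, so $D:=\Mhh_0-\gamma_0^*\Mh_0$ is $\gamma_0$-exceptional and orthogonal to every contracted curve, hence $D=0$ by negative-definiteness of the $\gamma_0$-exceptional sublattice. Writing $\Mkk_0=\gamma_0^*\Mk_0+E$ with $E\geq 0$ the $\gamma_0$-exceptional discrepancy, we get $\Mhh_0+\Mkk_0=\gamma_0^*(\Mh_0+\Mk_0)+E$. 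If $C$ is a curve contracted by $\lambda_0$, then the two conditions defining a pseudo adjoint relation give $(\Mhh_0+\Mkk_0)\cdot[C]=0$ and $[C]\cdot E=0$, so $(\Mh_0+\Mk_0)\cdot\gamma_{0*}[C]=\gamma_0^*(\Mh_0+\Mk_0)\cdot[C]=0$ and $C$ is disjoint from the $\gamma_0$-exceptional locus; hence $\gamma_0$ carries $C$ isomorphically onto a $(-1)$-curve of $Y_0$ orthogonal to $\Mh_0+\Mk_0$, i.e. onto a curve contracted by $\mu_0$. Therefore $\mu_0\circ\gamma_0$ contracts every $\lambda_0$-contracted curve, the induced birational morphism $\gamma_1\colon Z_1\to Y_1$ with $\gamma_1\circ\lambda_0=\mu_0\circ\gamma_0$ exists, and commutativity gives $\gamma_0^*(\Mh_0+\Mk_0)=\lambda_0^*\gamma_1^*\Mh_1$, whence $\lambda_{0*}(\Mhh_0+\Mkk_0)=\gamma_1^*\Mh_1+\lambda_{0*}E$ with $\gamma_1^*\Mh_1$ nef and big and orthogonal to $\lambda_{0*}E$; taking moving parts yields $\Mhh_1=\MmfM(\lambda_{0*}(\Mhh_0+\Mkk_0))=\gamma_1^*\Mh_1$, and the same exceptional-lattice argument shows $\gamma_1$ contracts exactly the exceptional curves orthogonal to $\Mhh_1$.

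It remains, still inside \Mrefmclaim{a}, to produce the type 1 basis on $N(Z_1)$ and the generator-preserving inclusion $\iota\colon N(Z_1)\hookrightarrow N(Z_0)$. Since $N(Z_0)$ is of type 1, the surface $Y_0$ is covered by \LEM{basis}.1, so $\mu_0$ is the first step of a type 1 adjoint chain; hence $N(Y_1)$ is again of type 1 with a generator-preserving inclusion into $N(Y_0)$ (by \LEM{basis}), and the $\mu_0$-contracted curves are disjoint (\PRP{chain}.\Mrefmclaim{a}, using $\Mh_1^2>0$). Transporting this configuration through $\gamma_0$ and deleting the (disjoint, by the first paragraph) $\lambda_0$-contracted $(-1)$-curves, the main point to verify — and the one genuinely delicate step of the lemma — is that $\lambda_0$ keeps us within blowups of $\MbbP^2$, equivalently that $N(Z_1)$ is an odd lattice (it must not contract $Z_0$ onto $\MbbP^1\times\MbbP^1$ or another even Hirzebruch surface); given this, $N(Z_1)$ is unimodular of signature $(1,\Mrnk(N(Z_1))-1)$, hence of type 1 with $\Me_0$ realized as the pullback of lines, and $\iota$ preserves the type 1 generators. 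I expect the rest of the lemma to be routine once \Mrefmclaim{a} is in place.

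Granting \Mrefmclaim{a}, part \Mrefmclaim{b} is short: $\gamma_i^*$ is injective onto $\Mset{x\in N(Z_i)}{x\cdot e=0 \text{ for all } e\in\McalE}$ (because $\gamma_i$ contracts exactly the exceptional curves orthogonal to $\Mhh_i$), and the projection formula together with $\gamma_{i*}\Mkk_i=\Mk_i$ gives $\Mhh_i\cdot\gamma_i^*f'=\Mh_i\cdot f'$ and $\Mkk_i\cdot\gamma_i^*f'=\Mk_i\cdot f'$. Pushforward $\gamma_{i*}$ and strict transform are mutually inverse bijections between covering rational families of $Y_i$ and of $Z_i$, both preserving the hyperplane degree (since $\Mhh_i=\gamma_i^*\Mh_i$), so the two minimal degrees agree; hence a class in $S(Z_i,\Mhh_i)$ lies in $S_*(Z_i,\Mhh_i)$ precisely when it is $\gamma_i^*f'$ for some $f'\in S(Y_i,\Mh_i)$. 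For the containment $\gamma_i^*S(Y_i,\Mh_i)\subseteq S_*(Z_i,\Mhh_i)$ one observes that the strict transform of such an $f'$ lies in $S(Z_i,\Mhh_i)$, hence by the reverse containment equals $\gamma_i^*f''$ with $f''\in S(Y_i,\Mh_i)$, and pushing forward forces $f''=f'$, so the exceptional correction vanishes.

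Finally, part \Mrefmclaim{c} combines \Mrefmclaim{b} with \PRP{pp}: if $f=\gamma_1^*g'\in S_*(Z_1,\Mhh_1)$ satisfies $\Mkk_1\cdot f=-2$ then $\Mk_1\cdot g'=-2$, and \PRP{pp}.\Mrefmclaim{a} together with \PRP{pp}.\Mrefmclaim{b} yield $S(Y_0,\Mh_0)=\Mset{\mu_0^*g'}{g'\in S(Y_1,\Mh_1),~\Mk_1\cdot g'=-2}$ — each class in $S(Y_0,\Mh_0)$ has canonical degree $-2$, hence equals the pullback of its pushforward. Applying $\gamma_0^*$ and using $\gamma_0^*\mu_0^*=\lambda_0^*\gamma_1^*$ and \Mrefmclaim{b} converts this into the stated formula for $S_*(Z_0,\Mhh_0)$. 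Part \Mrefmclaim{d} is then coordinate bookkeeping: by \Mrefmclaim{b} the class $\gamma_1^*f$ lies in $S_*(Z_1,\Mhh_1)$, and by \Mrefmclaim{a} the map $\gamma_1^*$ sends the type 1 generators of $N(Y_1)$ to those of $N(Z_1)$ (so that $\gamma_1^*f$ coincides with $\gamma_0^*f$ under $\iota$), whence $\gamma_1^*f$ has the same coordinate vector as $f$ (padded with zeros); since membership in $\Psi_j$ depends only on this vector up to permutation of the $\Me_i$, and the basis changes \EQN{basis} between type 1 and type 2 are compatible with enlarging the lattice, we conclude $\gamma_1^*f\in\Psi_j$.
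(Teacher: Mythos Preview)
Your overall strategy matches the paper's --- establish \Mrefmclaim{a} via the identities $\Mhh_i=\gamma_i^*\Mh_i$ and the canonical-class formula \EQN{k}, then read off \Mrefmclaim{b}, \Mrefmclaim{c}, \Mrefmclaim{d} --- but two steps do not go through as written.

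First, your route to the type 1 basis on $N(Z_1)$ passes through ``$N(Z_0)$ is of type 1, so $Y_0$ is covered by \LEM{basis}.1, so $\mu_0$ is the first step of a type 1 adjoint chain; hence $N(Y_1)$ is again of type 1.'' This inference is false: $Z_0$ being a blowup of $\MbbP^2$ does not force $Y_0$ to be one, and the adjoint chain of $(Y_0,\Mh_0)$ may well be of type 2 --- see \EXM{par}, where the $Y_i$ sit over $\MbbP^1\times\MbbP^1$ and carry type 2 bases throughout. The paper never touches a basis of $N(Y_i)$; instead it proves the \emph{converse} direction you omit (its Claim 3): for every $\mu_0$-contracted curve $b$ one has $\gamma_0^*b=b'$ with \emph{no} exceptional correction, hence $b'\cdot c_j=0$ and $b'$ is $\lambda_0$-contracted. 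Together with your forward direction this gives a bijection between $\lambda_0$-contracted and $\mu_0$-contracted curves, which is what forces $\gamma_1$ to contract exactly the $\lambda_{0*}c_j$ and lets one reindex so that the $\lambda_0$-contracted classes are $\Me_{r'+1},\ldots,\Me_r$.

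Second, the same false premise breaks your \Mrefmclaim{d}: you assume $\gamma_1^*$ carries type 1 generators of $N(Y_1)$ to type 1 generators of $N(Z_1)$, but $N(Y_1)$ need not admit a type 1 basis at all. The paper's argument is different and insensitive to this. It uses that $\Psi_j$ is, by construction, the set of solutions of $\Mk\cdot f=-2$, $f^2=j$ inside a type 1 lattice of rank at most $9$; then \EQN{k} plus orthogonality to the $c_j$ give $\Mkk_1\cdot\gamma_1^*f=\Mk_1\cdot f=-2$ and $(\gamma_1^*f)^2=f^2=j$, so $\gamma_1^*f$ satisfies the same numerical constraints in the type 1 lattice $N(Z_1)$. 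If $\Mrnk N(Z_1)\leq 9$ this already gives $\gamma_1^*f\in\Psi_j$; if $\Mrnk N(Z_1)>9$ one uses $\Mrnk N(Y_1)\leq 9$ to find enough $\Me_t$ orthogonal to $\gamma_1^*f$ and split off a rank-$9$ sublattice containing it.
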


\newpage
\begin{proof}
Let $c_1,\ldots,c_s\in N(Z_0)$ denote the classes of exceptional curves that are contracted by $\gamma_0$.

\Mclaim{1} $\Mhh_0=\gamma_0^*\Mh_0$ and $\Mhh_1=\gamma_1^*\Mh_1$.
\\
Since $\gamma_0$ contracts exceptional curves that are orthogonal
to $\Mhh_0$ it follows that $\Mhh_0=\gamma_0^*\Mh_0$.
We recall from \SEC{chain} that $(Y_1,\Mh_1)$ is a ruled pair
so that $\Mh_0+\Mk_0$ has no fixed components.
Consequently, $\gamma_0^*(\Mh_0+\Mk_0)$ has no fixed components as 
$\gamma_0^*(\Mh_0+\Mk_0)\cdot c_j=0$ for all $1\leq j\leq s$.
By \citep[(1.41)]{deb1} we have
\begin{equation}
\label{eqn:k}
\Mkk_0=\gamma_0^*\Mk_0+c_1+\ldots+c_s. 
\end{equation}
Thus
$\Mhh_0+\Mkk_0=\gamma_0^*(\Mh_0+\Mk_0)+c_1+\ldots+c_s$
so that
$\MmfF(\lambda_{0*}(\Mhh_0+\Mkk_0))=\lambda_{0*}(c_1+\ldots+c_s)$,
since 
$(\Mhh_0+\Mkk_0)\cdot c_j<0$ for $1\leq j\leq s$.
It follows that
$\Mhh_1=\lambda_{0*}\gamma_0^*(\Mh_0+\Mk_0)=\lambda_{0*}(\mu_0\circ\gamma_0)^*\Mh_1=\gamma_1^*\Mh_1$,
as claimed.

\Mclaim{2} {\it If $e\in N(Z_0)$ is an exceptional curve contracted by $\lambda_0$,
then $\gamma_{0*}e\in N(Y_0)$ is an exceptional curve contracted by $\mu_0$.
Moreover, exceptional curves contracted by $\lambda_0$ are disjoint
from the curves that are contracted by $\gamma_0$.
}
\\
Since $(\Mhh_0+\Mkk_0)\cdot e=0$
it follows that $\Mhh_0\cdot e=\gamma_0^*\Mh_0\cdot e=\Mh_0\cdot\gamma_{0*}e=1$.
Moreover,
$\Mkk_0\cdot e=(\gamma_0^*\Mk_0+c_1+\ldots+c_s)\cdot e=-1$
and thus $\Mk_0\cdot \gamma_{0*}e=-1-c_1\cdot e-\ldots-c_s\cdot e$
so that $(\Mh_0+\Mk_0)\cdot \gamma_{0*}e=-c_1\cdot e-\ldots-c_s\cdot e$. 
Since $\Mh_0+\Mk_0$ has no fixed components, 
it follows that $-c_1\cdot e-\ldots-c_s\cdot e\geq 0$ and thus $c_1\cdot e=\ldots=c_s\cdot e=0$.
The claim now follows from $(\gamma_{0*}e)^2=\Mk_0\cdot \gamma_{0*}e=-1$.

\Mclaim{3} {\it If $b\in N(Y_0)$ is the class of an exceptional curve 
contracted by $\mu_0$, then $\gamma^*_0b\in N(Z_0)$ 
is the class of an exceptional curve contracted by $\lambda_0$.
}
\\
Let $\gamma_0^*b=b'+m_1c_1+\ldots+m_sc_s$, where $b'$ is the strict transform of $b$
and $m_i\in \MbbZ_{\geq 0}$. It follows from the projection formula and \EQN{k} that
$
\Mk_0\cdot b
=
\gamma_{0*}\Mkk_0 \cdot b
=
\Mkk_0 \cdot \gamma_0^* b
=
\Mkk_0 \cdot b' - m_0 - \ldots - m_s
=
-1
$ and $\Mkk_0 \cdot b'=\gamma_0^*\Mk_0 \cdot b'=\Mk_0 \cdot \gamma_{0*}b'=\Mk_0\cdot b=-1$.
Thus we find that $m_0=\ldots =m_s=0$ \Mst $\gamma^*_0b=b'$.
By \EQN{k}, \Mrefclaim{1} and $(\Mh_0+\Mk_0)\cdot b=0$, we have $(\Mhh_0+\Mkk_0)\cdot \gamma^*_0b=0$
with $(\gamma^*_0b)^2=\Mkk_0\cdot \gamma^*_0b=-1$ so that this claim holds.

\Mclaim{4} {\it The exceptional curves contracted by $\lambda_0$ and $\mu_0$
are disjoint.}
\\
Since $\Mhh_0^2>0$ and $\Mhh_0\cdot(c_1+c_2)=0$, it 
follows from Hodge index theorem that $(c_1+c_2)^2<0$ so that $c_1\cdot c_2=0$
and thus $c_i\cdot c_j=0$ for $i\neq j$. Similar argument shows this claim for $\mu_0$.

\Mclaim{5} {\it $\gamma_1$ contracts exceptional curves $C'\subset Z_1$
\Mst $\Mhh_1\cdot [C']=0$.}
%
%
\\
Since $\gamma_1=\mu_0\circ\gamma_0$,
it follows from claims 2, 3 and 4 
and Castelnuovo's contraction criterion
that $\gamma_1$ contracts exactly the exceptional curves with classes $\lambda_{0*}c_j$
for $1\leq j\leq s$. 
We notice that 
$\lambda^*_0\Mhh_1=\gamma_0^*(\Mh_0+\Mk_0)$, since the class of an exceptional curve
that is contracted by $\lambda_0$ is orthogonal to $\gamma_0^*(\Mh_0+\Mk_0)$.
Thus $\Mhh_1\cdot\lambda_{0*}c_j=\lambda^*_0\Mhh_1\cdot c_j=0$ as claimed.

%

\Mrefmclaim{a}
By claims 2, 3 and 4 we find that $N(Z_0)=\Mmod{\Me_0,\Me_1,\ldots,\Me_r}$
and $N(Z_1)=\Mmod{\Me_0,\Me_1,\ldots,\Me_{r'}}$ where 
either $r=r'$ or $r'<r$ and the exceptional curves with
classes $\Me_{r'},\Me_{r'+1},\ldots,\Me_{r}$ are contracted by $\lambda_0$.
These choices for generators induce the inclusion $\iota$.
Claims 1 and 5 conclude the proof for assertion \Mrefmclaim{a}.

\Mrefmclaim{b} 
If $g\in S_*(Z_i,\Mhh_i)$ where $0\leq i\leq 1$, then $\Mhh_i\cdot \gamma_i^*f \geq \Mhh_i\cdot g$
for all $f\in S(Y_i,\Mh_i)$.
It follows from \Mrefmclaim{a} that $\Mh_i\cdot f\geq \Mh_i\cdot \gamma_{i*} g$,
since $\Mhh_i\cdot \gamma_i^*f=\Mh_i\cdot f$ and $\Mhh_i\cdot g=\gamma_i^*\Mh_i \cdot g$.  
As $f\in S(Y_i,\Mh_i)$ we find that $\Mh_i\cdot f\leq \Mh_i\cdot \gamma_{i*} g$
and thus $\Mhh_i\cdot \gamma_i^*f = \Mhh_i\cdot g$.
Therefore $\gamma_i^*f \in S_*(Z_i,\Mhh_i)$ and $\gamma_{i*} g\in S(Y_i,\Mh_i)$ so that 
assertion \Mrefmclaim{b} holds.

\Mrefmclaim{c}
This assertion follows from \Mrefmclaim{b} and \PRP{pp}.

\Mrefmclaim{d}
We apply \Mrefmclaim{a} and subsequently consider adjoint relations 
until $(Y_\ell,\Mh_\ell)$ is a minimal ruled pair. By assumption,
$f\in\Psi_j$ for $j\in\{0,1,2,4\}$ and thus by \PRP{mp} and \PRP{chain}.\Mrefmclaim{c} 
the surface $Y_\ell$ is a weak del Pezzo surface.
By \Mrefmclaim{c} we may assume \Mwlog that $(Y_1,\Mh_1)=(Y_\ell,\Mh_\ell)$.
By \LEM{psi} the set $\Psi_j$ 
is constructed with \citep[Algorithm~1]{nls-algo-fam},
as the set of classes with canonical degree $-2$ 
and self-intersection $j$ \Mst the rank of the NS-lattice is at most 9. 
By \Mrefmclaim{a} the lattice $N(Z_1)$ has a type 1 basis $\Mmod{\Me_0,\ldots,\Me_r}$.
It follows from \EQN{k} that $\gamma_0^*f\in \Mset{ g\in N(Z_1)}{ \Mkk_1\cdot g=-2 \text{ and } g^2=j }$.
Thus, if $r\leq 8$, then $\gamma_0^*f\in\Psi_j$ by construction.
If $r>8$, then there must exist $e_t$ for $1\leq t \leq r$ \Mst $\gamma_0^*f\cdot e_t=0$,
since $f\in N(Y_1)$ and the rank of $N(Y_1)$ is at most 9.
It follows, after permutation of the generators, that $N(Z_1)=\Mmod{\Me_0,\ldots,\Me_8}\oplus\Mmod{\Me_9,\ldots,\Me_r}$
and $\gamma_0^*f\in \Mmod{\Me_0,\ldots,\Me_8}$ so that $\gamma_0^*f\in\Psi_j$.
We conclude from \Mrefmclaim{b} that $\gamma_0^*f\in S_*(Z_1,\Mhh_1)\cap \Psi_j$.
\end{proof}

\newpage
\begin{example}
\textbf{(pseudo adjoint chain)}
\label{exm:par}
\\
We consider a pseudo adjoint chain in a diagram following \LEM{par}, 
\begin{equation*}
\begin{array}{ccccccc}
(Z_0,\Mhh_0)       & \Marrow{}{\lambda_0}{} & (Z_1,\Mhh_1)        & \Marrow{}{\lambda_1}{}& (Z_2,\Mhh_2)     \\           
\downarrow\gamma_0 &                        &  \downarrow\gamma_1 &                       &  \downarrow\gamma_2 \\
(Y_0,\Mh_0)        & \Marrow{}{\mu_0}{}     & (Y_1,\Mh_1)         & \Marrow{}{\mu_1}{}    & (Y_2,\Mh_2)      \\
\end{array}
\end{equation*}
\Mst 
\[
\begin{array}{r@{\,=\,}r@{\,}r@{\,}r@{\,}r@{\,}r@{\,}r@{\,}r@{\,}r@{\,}r@{\,}}
\Mhh_0 & 10\Me_0 &-& 5\Me_1 &-& 5\Me_2 &-& 2\Me_3 &-& 2\Me_4, \\
\Mhh_1 &  6\Me_0 &-& 3\Me_1 &-& 3\Me_2 &-&  \Me_3 &-&  \Me_4, \\
\Mhh_2 &   2\Me_0 &-& \Me_1 &-&  \Me_2 &,~ &        & &        \\
\Mkk_0 & -3\Me_0 &+& \Me_1 &+& \Me_2 &+& \Me_3 &+& \Me_4, \\
\Mkk_1 & -3\Me_0 &+& \Me_1 &+& \Me_2 &+& \Me_3 &+& \Me_4, \\
\Mkk_2 & -3\Me_0 &+& \Me_1 &+& \Me_2&,~ &       & &        \\
\end{array}
\quad
\begin{array}{r@{\,=\,}r@{\,}r@{\,}r@{\,}r@{\,}r@{\,}r@{\,}r@{\,}r@{\,}r@{\,}}
\Mh_0 & 5(\Ml_0+\Ml_1) &-& 2\Mp_1 &-&  2\Mp_2, \\
\Mh_1 & 3(\Ml_0+\Ml_1) &-&  \Mp_1 &-&   \Mp_2, \\
\Mh_2 & \Ml_0+\Ml_1,\,    & &        & &         \\
\Mk_0 & -2(\Ml_0+\Ml_1) &+& \Mp_1 &+& \Mp_2, \\
\Mk_1 & -2(\Ml_0+\Ml_1) &+& \Mp_1 &+& \Mp_2, \\
\Mk_2 & -2(\Ml_0+\Ml_1)&.~~ &        & &         \\
\end{array}
\]
The basis of $N(Z_0)$ is of type 1 \Mst 
$\sigma_*(\Me_0)=\Me_0$, $\sigma_*(\Me_1)=\Me_{2}$ and $\sigma_*(\Me_3)=\Me_{4}$.
The basis of $N(Y_0)$ is of type 2 \Mst 
$\sigma_*(\Ml_0)=\Ml_1$ and $\sigma_*(\Mp_1)=\Mp_2$.
The map $\gamma_i$ is the contraction of an exceptional curve $C\subset Y_i$ 
\Mst $[C]=\Me_0-\Me_1-\Me_2$ for $0\leq i\leq 2$. 
For computing the psuedo adjoint chain 
we notice that $\MmfM(\Mhh_i+\Mkk_i)=\Mhh_{i+1}$ and $\MmfF(\Mhh_i+\Mkk_i)=\Me_0-\Me_1-\Me_2$
for $0\leq i\leq 1$.
We verify that $\gamma_i^*\Mh_i=\Mhh_i$ 
and $\Mkk_i-\gamma_i^*\Mk_i=\Me_0-\Me_1-\Me_2$
for $0\leq i\leq 2$,
in accordance with \LEM{par}.\Mrefmclaim{a} and \EQN{k} \Mresp.
We know from case (4) at \PRP{mp} and from \PRP{pp} 
that $S(Y_2,\Mh_2)=\{~\Ml_0+\Ml_1~\}$ and $S(Y_0,\Mh_0)=S(Y_1,\Mh_1)=\{~\Ml_0+\Ml_1-\Mp_1-\Mp_2~\}$.
We convert from a type 2 to a type 1 basis via the map
\[
\Mfun{\gamma_0^*}{N(Y_0)}{N(Z_0)}{(~\Ml_0,~\Ml_1,~\Mp_1,~\Mp_2~)}{(~\Me_0-\Me_1,~\Me_0-\Me_2,~\Me_3,~\Me_4~)}.                             
\]
By \LEM{par}.\Mrefmclaim{b} we have that
$S_*(Z_2,\Mhh_2)=\{~ 2\Me_0-\Me_1-\Me_2 ~\}$
and 
$S_*(Z_0,\Mhh_0)=S_*(Z_1,\Mhh_1)=\{~ 2\Me_0-\Me_1-\Me_2-\Me_3-\Me_4 ~\}$.
Notice that $(Z_2,\Mhh_2)$ is the ruled pair of a sphere in 3-space, 
as discussed in \EXM{sphere}.
\Mend
\end{example}

\newpage
\begin{algorithm}
\textbf{(minimal families)}  
\label{alg:fam} 
\begin{itemize}[itemsep=-3pt,topsep=0pt]
\item \textbf{Input:}
A birational map $\Mdashrow{\McalH}{\MbbP^2}{X}$.

\item \textbf{Output:}
The set $\Gamma\subset N(Z_0)$ of classes of minimal families,
where $\Mrow{\tau_1}{Z_0}{\MbbP^2}$ be the resolution of the base locus of $\McalH$
as in \EQN{res}.

\item \textbf{Method:}
\begin{enumerate}[topsep=0pt, leftmargin=0mm]
\item
Compute $\Mhh_0,\Mkk_0\in N(Z_0)$ using a basepoint analysis algorithm
(see \citep[Algorithm~1 and Section~4.3]{nls-bp}).

\item
$i:=0$;
\textbf{while} $h^0(\Mhh_i+\Mkk_i)>1$ \textbf{do}
\begin{enumerate}[itemsep=-3pt,leftmargin=8mm, label=(\roman*)]

\item
Compute linear series $Q=(q_j)_j$ of curves in $\MbbP^2$ 
\Mst $\tau_1^*Q$ has class $\Mhh_i+\Mkk_i$ in $N(Z_i)$ \citep[Algorithm~2]{nls-bp}. 
Set $Q':=\left(\frac{q_j}{g}\right)_j$ where $g$ is the polynomial gcd of 
the generators $(q_j)_j$.
Verify which of the basepoints computed at step (1) 
are basepoints of $Q'$ and determine their multiplicities.

\item
{\it Let the tuple $(p_t)_{t\in T_{i+1}}$ correspond to the basepoints of $Q'$ 
with multiplicities $(m_t)_{t\in T_{i+1}}$
and let $m_0$ denote the degree of the curves in $Q'$.
Let $\Me_t$ denote the class of the pullback of the exceptional curve
that contracts via $\tau_1$ to $p_t$.}
\\
$N(Z_{i+1}):=\langle~\Me_t~|~t\in \{0\}\cup T_{i+1} ~\rangle$;
\\
$\Mhh_{i+1}:=m_0\Me_0-\Msum{t\in T_{i+1}}{}m_t\Me_t$;\qquad
$\Mkk_{i+1}:=-3\Me_0+\Msum{t\in T_{i+1}}{}\Me_t$; 

\item 
$i:=i+1$;
\end{enumerate}

%
%
%

\item
$\widehat{\Psi}:=\Psi_0\cup\Psi_1\cup\Psi_2\cup\Psi_4\cup\left\{~2\Me_0-\Me_1-\Me_2, ~\frac{-2}{\Mhh_i\Mkk_i}\Mhh_i,~\frac{-2}{(2\Mhh_i+\Mkk_i)\Mk_i}(2\Mhh_i+\Mkk_i) ~\right\}$;
\\
\textbf{while} $i\geq 0$ \textbf{do}
\begin{enumerate}[leftmargin=8mm, label=(\roman*)]
\item $\Gamma:=\Mset{ f\in\widehat{\Psi} }{ \Mhh_i\cdot f=\text{min}\Mset{\Mhh_i\cdot g}{g\in \widehat{\Psi}} }$;
\\
$\Gamma:=\Mset{ f\in \Gamma }{\text{ generators of linear series with class }f\text{ are coprime}}$;
\item {\bf if} $\Mset{\Mkk_i\cdot f}{f\in\Gamma}=\{-2\}$ {\bf then return} $\Gamma$;
\item $i:=i-1$; 
\end{enumerate}
\item {\bf return} $\Gamma$;
\Mend
\end{enumerate}

\end{itemize}
\end{algorithm}

\newpage
\begin{theorem} 
The output specification of \ALG{fam} is correct.
\end{theorem}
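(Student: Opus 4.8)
The goal is to show that the set $\Gamma$ returned by \ALG{fam} equals $S_*(Z_0,\Mhh_0)$; since $N(X)\cong N(Z_0)/\McalE$ and a class of $S(Z_0,\Mhh_0)$ descends to the class of a minimal family of $X$ exactly when it is orthogonal to $\McalE$, this identifies $\Gamma$ with the set of classes of minimal families of $X$. I would split the argument into three parts matching the three blocks of the algorithm. For the first block, step~1 computes $\Mhh_0,\Mkk_0$ correctly by the basepoint analysis of \citep[Algorithm~1 and Section~4.3]{nls-bp}; at iteration $i$ of step~2, \citep[Algorithm~2]{nls-bp} produces a linear series $Q$ on $\MbbP^2$ whose pullback has class $\Mhh_i+\Mkk_i$, dividing out the polynomial gcd replaces $Q$ by its moving component, so $\tau_1^*Q'$ has class $\MmfM(\lambda_{i*}(\Mhh_i+\Mkk_i))$, and the recomputed basepoints with their multiplicities yield $N(Z_{i+1})$, $\Mhh_{i+1}$ and $\Mkk_{i+1}$ exactly as in the definition of a pseudo adjoint relation (the formula for $\Mkk_{i+1}$ being \citep[(1.41)]{deb1}). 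The loop stops, i.e.\ $h^0(\Mhh_\ell+\Mkk_\ell)\leq1$, because placing the pseudo adjoint chain over a genuine adjoint chain as in the diagram of \LEM{par} and using $h^0(\Mhh_i+\Mkk_i)=h^0(\Mh_i+\Mk_i)$ (from \LEM{par} and \EQN{k}), the genuine chain exists and terminates by \PRP{chain}.\Mrefmclaim{b}.

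For the second block I claim that for every $0\leq i\leq\ell$ the set $\Gamma_i$ formed at step~3(i) equals $S_*(Z_i,\Mhh_i)$. Extending the diagram of \LEM{par} to a genuine adjoint chain starting at $(Y_i,\Mh_i)$, we have $S_*(Z_i,\Mhh_i)=\gamma_i^*S(Y_i,\Mh_i)$ by \LEM{par}.\Mrefmclaim{b}, and \THM{cls} lists the possibilities for $S(Y_i,\Mh_i)$: the pullback of a class $f_\ell\in S(Y_\ell,\Mh_\ell)$ with $\Mk_\ell\cdot f_\ell=-2$, or a class in $\{\Me_0,\Me_0-\Me_1,2\Me_0-\Me_1-\Me_2-\Me_3-\Me_4\}$, or — after the basis change \EQN{basis}, under which $\Ml_0+\Ml_1\mapsto2\Me_0-\Me_1-\Me_2$, $\Ml_0+\Ml_1-\Mp_1\mapsto\Me_0$ and $\Ml_0+\Ml_1-\Mp_1-\Mp_2\mapsto\Me_0-\Me_2$ — a class in $\{\Ml_0+\Ml_1,\Ml_0+\Ml_1-\Mp_1,\Ml_0+\Ml_1-\Mp_1-\Mp_2\}$. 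In the first case \PRP{mp} together with \EQN{basis} puts $f_\ell$ in $\Psi_0\cup\Psi_2\cup\Psi_4$ or makes it one of the two $\MbbP^1$-bundle fibre classes, and the argument of \LEM{pull} shows that pulling back along adjoint and pseudo adjoint relations of canonical degree $-2$ leaves the class unchanged; in the remaining cases the classes are visibly among the members of $\widehat\Psi$. Combined with \LEM{par}.\Mrefmclaim{d} (and the corresponding computation for the two fibre classes, using $\Mhh_i=\gamma_i^*\Mh_i$ and $\Mkk_i=\gamma_i^*\Mk_i+c_1+\cdots+c_s$ from \EQN{k}), this gives $S_*(Z_i,\Mhh_i)\subseteq\widehat\Psi$. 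Every member of $S_*(Z_i,\Mhh_i)$ has minimal $\Mhh_i$-degree and, being the class of a covering family whose general member is an irreducible rational curve, moves without fixed components, hence survives both filters of step~3(i); thus $S_*(Z_i,\Mhh_i)\subseteq\Gamma_i$. For the reverse inclusion I would exploit the rigidity of $\widehat\Psi$: every class of $\widehat\Psi$ has $p_a=0$ and $h^0\geq2$, so with coprime generators it is the class of a covering rational family, and if such a class of minimal $\Mhh_i$-degree were not orthogonal to some $e\in\McalE$, then $\gamma_{i*}$ of it would lie in $S(Y_i,\Mh_i)$ while $\gamma_i^*\gamma_{i*}$ would move it to a second element of $\widehat\Psi$ of strictly larger self-intersection and strictly more negative canonical degree; a short case check against the explicit tables of \DEF{psi} and the bound $f^2\in\{0,1,2,4\}$ shows no such pair exists. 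Hence $\Gamma_i=S_*(Z_i,\Mhh_i)$.

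The third block concerns the while-loop of step~3, which runs $i=\ell,\ell-1,\dots,0$ and returns $\Gamma_i$ at the first $i$ with $\{\Mkk_i\cdot f:f\in\Gamma_i\}=\{-2\}$, and $\Gamma_0$ if this never occurs. If it fires at index $i$, then by the second block $S_*(Z_i,\Mhh_i)=\Gamma_i$ consists only of classes of canonical degree $-2$, so the hypothesis of \LEM{par}.\Mrefmclaim{c} is met and, iterating it downward together with the class-preserving property of pullback along pseudo adjoint relations of canonical degree $-2$, one gets $S_*(Z_j,\Mhh_j)=\Gamma_i$ for all $j\leq i$, in particular $S_*(Z_0,\Mhh_0)=\Gamma_i$; the same computation shows the condition cannot fire prematurely. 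If it never fires, the second block at $i=0$ gives $\Gamma_0=S_*(Z_0,\Mhh_0)$ directly. In either case the output equals $S_*(Z_0,\Mhh_0)$, i.e.\ the classes of minimal families of $X$, as claimed.

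I expect the main obstacle to be the reverse inclusion in the second block: certifying that the purely numerical screening the algorithm performs — selecting the minimal $\Mhh_i$-degree classes of the fixed template $\widehat\Psi$ and discarding those with non-coprime generators — cannot let through a spurious class that fails orthogonality to $\McalE$ (equivalently, does not descend to a covering family of $X$). This is exactly where the explicit tabulation in \DEF{psi} and the self-intersection restriction of \LEM{self} are indispensable, and a careful case analysis in the spirit of the proof of \LEM{psi0} is required; the rest of the argument is bookkeeping that strings together \LEM{par}, \THM{cls}, \PRP{mp} and \LEM{pull}.
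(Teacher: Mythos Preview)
Your argument follows the same skeleton as the paper's proof --- step~1 and step~2 build the pseudo adjoint chain, step~3 is justified via \LEM{par} and \THM{cls} --- but you go considerably further than the paper in spelling out the details. In particular, the paper's proof is five sentences long and simply asserts that ``step~(3) is a direct consequence of \LEM{par} and \THM{cls}''; you instead formulate the intermediate claim $\Gamma_i=S_*(Z_i,\Mhh_i)$ for every $i$ and try to prove both inclusions. That is a reasonable way to make the paper's terse argument precise, and your forward inclusion and your block~3 (the while-loop logic via \LEM{par}.\Mrefmclaim{c} and the coordinate-preserving inclusion of \LEM{par}.\Mrefmclaim{a}) are fine.

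The place where your write-up is not yet a proof is exactly the one you flag: the reverse inclusion $\Gamma_i\subseteq S_*(Z_i,\Mhh_i)$. Two concrete issues. First, your blanket claim that ``every class of $\widehat\Psi$ has $p_a=0$'' is false for $\Psi_2$ and $\Psi_4$, where $p_a(f)=1$ and $p_a(f)=2$ respectively; the rationality of the corresponding families comes not from $p_a=0$ but from \LEM{psi2a}, \LEM{psi2b}, \LEM{psi4}. Second, the orthogonality-to-$\McalE$ step is genuinely delicate: for instance in \EXM{sphere} one has $\Me_0\in\Psi_1\subset\widehat\Psi$ with coprime generators and the same $\Mhh_0$-degree as $2\Me_0-\Me_1-\Me_2$, yet $\Me_0\cdot(\Me_0-\Me_1-\Me_2)=1\neq0$, so $\Me_0\notin S_*(Z_0,\Mhh_0)$; your sketch (``$\gamma_i^*\gamma_{i*}$ moves it to a class of strictly larger self-intersection, then case-check'') is the right shape but is not carried out, and the case-check is not as short as you suggest. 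The paper's own proof does not address this point explicitly either, so you are not missing an argument that the paper supplies --- you are simply being more honest about where the work lies.
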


\begin{proof}
At step (1) we compute the generators of the NS-lattice of the pair $(Z_0,\Mhh_0)$.
At step (2) we compute 
the generators of $N(Z_i)$ together with $\Mhh_i$ and $\Mkk_i$,
for each pair $(Z_i,\Mhh_i)$ in the pseudo adjoint chain  of $(Z_0,\Mhh_0)$.
Step (3) is a direct consequence of \LEM{par}
and \THM{cls}: we go backwards through the pseudo adjoint chain,
computed at step (2),
until all minimal families 
have canonical degree $-2$. Note that by \LEM{par}.\Mrefmclaim{a} the
inclusion $\Mhookrow{\iota}{N(Z_{i+1})}{N(Z_{i})}$ preserves the generators 
of a type 1 basis $\Mmod{\Me_0,\ldots,\Me_r}$.
We remark that computing the generators of
the linear series at step 3.(i) is done as at step 2.(i) using \citep[Algorithm~2]{nls-bp}.
\end{proof}

\begin{remark}
\textbf{(representation of complete minimal families)}
\\
Suppose that $[F]$ in output $\Gamma$ of \ALG{fam}, 
is the class of a complete minimal family $F\subset X\times \MbbP^1$.
We explain in \citep[Section~4]{nls-algo-fam},
how to compute a reparametrization 
$\Mdashrow{\McalP}{\MbbP^1\times\MbbP^1}{X}$ of $\McalH$ \Mst 
for arbitrary but fixed $b\in\MbbP^1$, the map $\McalP(a,b)$ parametrizes member $F_b\subset X$ with parameter $a\in\MbbP^1$
(see \SEC{fam} for the definition of $F_b$).
\Mend
\end{remark}

\begin{remark}
\textbf{(representation of non-complete minimal families)}
\\
Suppose that $[F]$ in output $\Gamma$ of \ALG{fam}, is the class of a 
non-complete minimal family $F\subset X\times A$. 
We sketch an algorithm for computing equations for $F$. 
Recall that a linear series on $X$
is represented as a linear series on $\MbbP^2$, such that 
a curves in the respective linear series are related via diagram 
\EQN{res} (see \citep[Section~4.3]{nls-bp} and \citep[Section~4]{nls-algo-fam} for more details).

After step (2) in \ALG{fam} we computed $\Mhh_\ell$. 
We assume for simplicity that  
$\Mrow{\gamma_\ell}{Z_\ell}{Y_\ell}$ in \LEM{par} is an isomorphism so that $\Mhh_\ell=\Mh_\ell=-\Mk_\ell$.
We know from \PRP{mp} that either 
$[F]\in\Psi_2$ with $\Mhh_\ell^2=2$, 
$[F]\in\Psi_2$ with $\Mhh_\ell^2=1$
or 
$[F]\in\Psi_4$ with $\Mhh_\ell^2=1$.
We make a case distinction.

Suppose that $[F]\in\Psi_2$ and $\Mhh_\ell^2=2$. 
By \LEM{psi2a},
the linear series with class $[F]=\Mhh_\ell$ defines a map $\Mrow{\alpha}{\MbbP^2}{\MbbP^2}$
whose branching locus $B\subset\MbbP^2$ is a quartic curve.
We compute its dual $B^*\subset \MbbP^{2*}$ so that 
a point in $B^*$ correspond to a tangent line of $B$. 
Thus each irreducible component of $B^*$ defines a family of tangent lines of $B$.
Notice that such family is represented as a linear equation with indeterminate coefficients.
For each coefficient that satisfies the algebraic equations of the component, the corresponding line is tangent to $B$. 
Now suppose that $A$ is the smooth model of some component of $B^*$.
In this case, the image of the corresponding tangent lines with respect to $\McalH\circ\alpha^{-1}$ defines $F\subset X\times A$.
We use either Gr\"obner basis or resultants for inverting rational maps and
computing the preimage of varieties under rational maps \cite{sch11}. 

Suppose that $[F]\in\Psi_2$ with $\Mhh_\ell^2=1$. 
By \LEM{psi2b},
the linear series of $[F]=\Mhh_\ell+e$ defines a map $\Mrow{\beta}{\MbbP^2}{\MbbP^2}$,
where $e$ is the class of an exceptional curve.
We proceed as in the previous paragraph, but with $\beta$ instead of $\alpha$.

Suppose that $[F]\in\Psi_4$ with $\Mhh_\ell^2=1$. 
By \LEM{psi4}, the linear series of $[F]=2\Mhh_\ell$ defines a map $\Mrow{\gamma}{\MbbP^2}{Q}$,
whose branching locus $B\subset Q$ is a sextic curve on a quadric cone $Q\subset\MbbP^3$.
The dual $B^*\subset \MbbP^{3*}$ is a curve whose points correspond to osculating planes 
of $B\subset\MbbP^3$. Points on a line in the tangent developable $T^*\subset \MbbP^{3*}$ of $B^*$
correspond to planes though a tangent line of $B$. Thus a family of bitangent
planes of $B$ correspond to a component of the singular locus of $T^*$. 
The family of bitangent planes of $B$ that are also tangent to $Q$ define 
a triple conic component of the singular locus of $T^*$ and the lines in $Q$
are 2:1 coverings of elliptic curves in $X$ \citep[Section 8.8.3]{dol1}.
The curve $B^*$ also defines a component in the singular locus of $T^*$, 
which do not correspond to bitangent planes but to osculating planes. 
Suppose that $S^*\subset T^*$ is a component of the singular locus that is not 
a triple conic or $B^*$. 
Each point $P^*\in S^*$ defines a conic in $C_{P}:=Q\cap P$ where $P\subset\MbbP^3$ is the bitangent plane
corresponding to $P^*$. 
Thus we obtain a family of bitangent plane sections $(C_P)_{P^*\in S^*}$.
If $A$ is the smooth model of $S^*$, then
the pullback of $C_P\subset Q$
via $\McalH\circ \gamma^{-1}$ defines a curve in $F\subset X\times A$.
See \citep[Example~38]{nls-f2} for a worked out example with equations.
\Mend
\end{remark}

\section{Acknowledgements}

I thank J. Schicho for his support and interesting discussions.
I thank M. Skopenkov for interesting comments.

\bibliography{geometry}

\paragraph{address of author:}
Johann Radon Institute for Computational and Applied 
Mathematics (RICAM), Austrian Academy of Sciences
\\
\textbf{email:} niels.lubbes@gmail.com

\end{document}